\renewcommand{\Im}{\operatorname{Im}}
\def\R{\ensuremath\mathbb{R}}
\def\C{\ensuremath\mathbb{C}}
\def\A{\ensuremath\mathbb{A}}
\def\Z{\ensuremath\mathbb{Z}}
\def\Q{\ensuremath\mathbb{Q}}
\def\N{\ensuremath\mathbb{N}}
\def\Hb{\ensuremath\mathbb{H}}
\newtheorem{thm}{Theorem}[section]
\newtheorem{cor}[thm]{Corollary}
\newtheorem{lemma}[thm]{Lemma}
\newtheorem{prop}[thm]{Proposition}
\theoremstyle{remark}
\newtheorem{remark}[thm]{Remark}
\def\eps{\ensuremath\varepsilon}
\def\0{\emptyset}
\def\Cl{\text{\rm Cl}}
\def\fin{\text{\rm fin}}
\def\sgn{\mathrm{sign}}
\def\Gal{\text{\rm Gal}}
\def\sym{\mathrm{sym}}
\def\SO{\hbox{\rm SO}}
\def\OO{\hbox{\rm O}}
\def\SL{\hbox{\rm SL}}
\def\PSL{\mathrm{PSL}}
\def\GL{\mathrm{GL}}
\def\modulo{\text{ \rm mod }}
\def\vol{\text{\rm vol}}
\def\pmod{\text{ \rm mod }}
\numberwithin{equation}{section}
\numberwithin{equation}{section}
\begin{document}
\title[Wide moments]{Wide moments of $L$-functions I: Twists by class group characters of imaginary quadratic fields}
\author{Asbj\o rn Christian Nordentoft}

\address{Mathematical Institute of the University of Bonn, Endenicher Allee 60, Bonn 53115, Germany }
\email{\href{mailto:acnordentoft@outlook.com}{acnordentoft@outlook.com}}

\date{\today}
\subjclass[2010]{11F67(primary), and 11M41(secondary)}
\maketitle
\begin{abstract}
We calculate certain {\lq\lq}wide moments{\rq\rq} of central values of Rankin--Selberg $L$-functions $L(\pi\otimes \Omega, 1/2)$ where $\pi$ is a cuspidal automorphic representation of $\GL_2$ over $\Q$ and $\Omega$ is a Hecke character (of conductor $1$) of an imaginary quadratic field. This moment calculation is applied to obtain {\lq\lq}weak simultaneous{\rq\rq} non-vanishing results, which are non-vanishing results for different Rankin--Selberg $L$-functions where the product of the twists is trivial. 

The proof relies on relating the wide moments of $L$-functions to the usual moments of automorphic forms evaluated at Heegner points using Waldspurger's formula. To achieve this, a classical version of Waldspurger's formula for general weight automorphic forms is derived, which might be of independent interest. A key input is equidistribution of Heegner points (with explicit error-terms) together with non-vanishing results for certain period integrals. In particular, we develop a soft technique for obtaining non-vanishing of triple convolution $L$-functions.     \end{abstract}
\section{Introduction}
Determining the moments of central values of families of automorphic $L$-functions has a long history starting with the work of Hardy and Littlewood on the Riemann zeta function: 
$$ \int_0^T |\zeta(1/2+it)|^2 dt\sim T\log T,  $$
as $T\rightarrow \infty$ (see \cite[Chapter VII]{Titchmarsh86}). By now there exists precise conjectures for all moments of families of $L$-functions \cite{CFKRS05} with fascinating connections to Random Matrix Theory \cite{KeaSna00}. These moment conjectures are of deep arithmetic importance through their connections to the important topics of non-vanishing and subconvexity (see e.g. \cite{BlFoKoMiMiSa18}), which in turn are connected to respectively, rational points on elliptic curves (via the B--S-D conjectures, see \cite{Kolyvagin88}) and equidistribution problems (via the Waldspurger formula, see \cite{MichVenk06}). 

In this paper we will calculate what we call \emph{wide moments} of central values of Rankin--Selberg $L$-functions $L(\pi \otimes \Omega,1/2)$, where $\pi$ is a cuspidal automorphic representation of $\GL_2$ with trivial central character of even lowest weight $k_\pi$ and $\Omega$ is a Hecke character of an imaginary quadratic field $K$ with infinity type $\alpha \mapsto (\alpha/|\alpha|)^k$ for some even integer $k\geq k_\pi$. More precisely, we will study the {\lq\lq}canonical{\rq\rq} square-roots of the central values via their connections to Heegner periods as in the work of Waldspurger \cite{Waldspurger85}. We will use these moment calculations to obtain a number of new non-vanishing results (see Subsection \ref{sec:applnonv} for the statements) of a certain kind that we call \emph{weak simultaneous non-vanishing}. In view of the Bloch--Kato conjectures, these non-vanishing results imply (in the holomorphic case) vanishing for certain twisted Selmer groups as explained in \cite[Corollary B]{BurungaleHida16}. 
\subsection{Wide moments of $L$-functions}
This paper is the first in a series of papers concerned with obtaining asymptotic evaluations of \emph{wide moments} of automorphic $L$-function. In all of the cases we will consider, these wide moments are connected to the usual moments of certain underlying periods of automorphic forms (in the case of this paper through the Waldspurger formula), which are much better behaved than the $L$-functions themselves. In particular, one can use a variety of more geometrically flavoured methods to study the distributional properties of these periods. 

The abstract set-up is as follows: Given a finite abelian group $G$ with (unitary) dual $\widehat{G}$, we define 
\begin{equation} \label{eq:Wide}  \mathbf{Wide}(\widehat{G},n):=\{(\chi_1,\ldots,\chi_n)\in (\widehat{G})^n: \chi_1\cdots \chi_n=1\}.\end{equation}
Given maps $L_1,\ldots, L_n: G\rightarrow \C$ with Fourier transforms 
$$\widehat{L_i}:\widehat{G}\rightarrow \C,\quad  \chi\mapsto \frac{1}{|G|}\sum_{g\in G} L_i(g)\overline{\chi}(g),\quad i=1,\ldots, m,$$ 
we define the {\bf wide moment} of $\widehat{L_1},\ldots, \widehat{L_n}$, as the following expression:
\begin{equation}\label{eq:widemoment} \sum_{(\chi_i)_{1\leq i \leq n}\in \mathbf{Wide}(\widehat{G},n)}\,\prod_{i=1}^n \widehat{L_i}(\chi_i).  \end{equation}
Note that for $n=2$ and $\widehat{L_1}=\widehat{L_2}$ equivariant with respect to inverses (i.e. $\widehat{L_1}(\chi^{-1})=\overline{\widehat{L_1}(\chi)}$), we recover the usual second moment. The key point is that (\ref{eq:widemoment}) is equal to
\begin{equation}\label{eq:widemoment2} \frac{1}{|G|}\sum_{g\in G}\, \prod_{i=1}^n L_i(g),\end{equation}
(for $n=2$ this is exactly Plancherel). A nice way to see that (\ref{eq:widemoment})$=$(\ref{eq:widemoment2}) is to use that the Fourier transform takes products to convolutions, and (\ref{eq:widemoment}) is exactly the $n$-fold convolution product of $\widehat{L_1},\ldots, \widehat{L_n}$ evaluated at $\chi=1$. In the setting of automorphic $L$-functions one can in many cases calculate the wide moments (\ref{eq:widemoment}) using that the dual moments (\ref{eq:widemoment2}) are much better behaved. 

The first example in the literature of an asymptotic evaluation of a (higher) wide moment of automorphic $L$-functions, seems to be the work of Bettin \cite{Be17} on Dirichlet $L$-functions (note that here the terminology {\lq\lq}iterated moments{\rq\rq} is used):
\begin{align} 
\nonumber&\frac{1}{(p-2)^{n-1}}\sideset{}{^*}\sum_{(\chi_i)\in \mathbf{Wide}(p,n)} |L(\chi_1,1/2)|^2\cdots  |L(\chi_n,1/2)|^2\\
\label{eq:Bettin}&= c_{n,n}(\log p)^n+c_{n,n-1}(\log p)^{n-1}+\ldots +c_{n,0}+O(p^{-\delta}),\end{align}
as $p\rightarrow \infty$ with $p$ prime, for some $\delta >0$ and $c_{n,i}\in \R$. Here the asteriks on the sum means that the summation is restricted to primitive Dirichlet characters and we used the shorthand $\mathbf{Wide}(p,n):=\mathbf{Wide}(\widehat{(\Z/p\Z)^\times)},n)$. This result is a corollary of the moment calculation of the \emph{Estermann function} (which we think of as the underlying automorphic periods in this case). Another related result is the calculation of Chinta \cite{Chinta05} corresponding to a wide moment with $n=3$ for quadratic Dirichlet $L$-functions. 

The asymptotics (\ref{eq:Bettin}) was later generalized (with an extra average over the modulus $q$) by the author \cite[Corollary 1.9]{No19} to the wide moments of 
$$\widehat{L_1}(\chi)=\cdots=\widehat{L_n}(\chi)=L(f\otimes \chi,1/2),\quad \text{for }\chi:(\Z/q\Z)^\times\rightarrow \C^\times, $$
with $f$ a fixed holomorphic newform of even weight. In \cite{No19} the underlying automorphic periods are the \emph{additive twists} of $f$ (which reduces to \emph{modular symbols} for $k=2$). In forthcoming work the author joint with Drappeau calculate all moments of additive twists of level $1$ Maa{\ss} forms. 

The methods used to calculate the wide moments mentioned above are respectively, a classical approximate functional equation approach \cite{Be17}, multiple Dirichlet series \cite{Chinta05}, spectral theory \cite{Nordentoft20.4} (see also \cite{PeRi}), and dynamical systems (in the forthcoming work joint with Drappeau building on \cite{BeDr19}).   

\subsection{Main idea} Let us describe the main moment calculation of this paper in the simplest possible set-up. Let $ f:\Hb \rightarrow \C $ be a classical Hecke--Maa{\ss} eigenform of weight $0$ and (for simplicity) level $1$ (i.e. a real-analytic  joint eigenfunction for the hyperbolic Laplace operator and the Hecke operators which is invariant under $\PSL_2(\Z)$). Let $K$ be an imaginary quadratic field of discriminant $D_K<-6$ with class group $\Cl_K$. Given a class group character $\chi\in\widehat{\Cl_K}$, we denote by $L(f\otimes \chi, s)$ (the finite part of) the Rankin--Selberg $L$-function $L(f\otimes \theta_\chi, s)$, where $\theta_\chi$ is the theta series associated to $\chi$ of weight $1$ and level $|D_K|$ (equivalently we have $L(f\otimes \chi, s)=L(\pi_K\otimes \pi_\chi, s)$ where $\pi_K$ denotes the base change to $\GL_2(\A_K)$ of the automorphic representation corresponding to $f$ and $\pi_\chi$ is the automorphic representation of $\GL_1(\A_K)$ corresponding to $\chi$). A deep formula of Zhang \cite{Zh01}, \cite{Zh04} gives the following relation;
\begin{equation}\label{eq:Waldspurger1} \left|\sum_{[\mathfrak{a}]\in \Cl_K} f(z_{[\mathfrak{a}]})\chi([\mathfrak{a}])\right|^2=|c_f|^2 |D_K|^{1/2} L(f\otimes \chi,1/2), \end{equation}
where $\chi\in \widehat{\Cl_K}$ is a class group character of $K$, $z_{[\mathfrak{a}]}\in \PSL_2(\Z)\backslash \Hb$ denotes the Heegner point associated to $[\mathfrak{a}]\in \Cl_K$ and $c_f>0$ is a constant depending on $f$ (but independent of $\chi$). Using this relation together with orthogonality of characters and equidistribution of Heegner points, Michel and Venkatesh \cite{MichelVenk07} calculated the first moment of $L(f\otimes \chi,1/2)$, which they combined with subconvexity to obtain quantitive non-vanishing for these central values. This idea has since been generalized in many directions to obtain a variety of non-vanishing results \cite{DiPrSe15}, \cite{BurungaleHida16}, \cite{Khayutin20}, \cite{Templier11}. 

We observe that (\ref{eq:Waldspurger1}) is exactly saying that the Fourier transform of 
$$\Cl_K\ni [\mathfrak{a}]\mapsto  |\Cl_K|f(z_{[\mathfrak{a}]}),$$ 
is given by a map of the form 
$$\widehat{\Cl_K}\ni \chi\mapsto \eps_{f, \chi}c_f |D_K|^{1/4} |L(f\otimes \overline{\chi},1/2)|^{1/2},$$ 
for some $\eps_{f, \chi}$ of norm $1$. Thus by the Fourier equality (\ref{eq:widemoment})$=$(\ref{eq:widemoment2}) and equidistribution of Heegner points due to Duke \cite{Du88} we conclude that for level $1$ Hecke--Maa{\ss} eigenforms $f_1,\ldots, f_n$ we have
\begin{align}\nonumber
\frac{|D_K|^{n/4}}{|\Cl_K|^n}\sum_{(\chi_i)\in \mathbf{Wide}(K,n)} \prod_{i=1}^n \eps_{f_i, \chi_i} c_{f_i}|L(f_i\otimes \chi_i,1/2)|^{1/2}&= \frac{1}{|\Cl_K|}\sum_{[\mathfrak{a}]\in \Cl_K}\prod_{i=1}^n f_i(z_{[\mathfrak{a}]})\\
 \label{eq:widemomentlevel1}&= \langle \prod_{i=1}^n f_i, \frac{3}{\pi}\rangle+o(1)  \end{align} 
as $|D_K|\rightarrow \infty$, where we use the short-hand $\mathbf{Wide}(K,n):=\mathbf{Wide}(\widehat{\Cl_K},n)$. This shows immediately that if $ \langle \prod_{i=1}^n f_i,1\rangle\neq 0$ then there exists
$$(\chi_1,\ldots, \chi_n)\in \mathbf{Wide}(K,n) \text{ s.t. }\prod_{i=1}^n L(f_i\otimes \chi_i,1/2)\neq 0.$$
We call the above {\it weak simultaneous non-vanishing} (see Section \ref{sec:weaknonv} for some background on this type of non-vanishing). 

\subsection{Non-vanishing results}\label{sec:applnonv}
The above proof sketch already gives new results. We will however push these ideas further in several aspects. First of all we deal with general weight forms (holomorphic or Maa{\ss}), which requires us to develop explicit Waldspurger type formulas in these cases (see Section \ref{sec:waldspurger}), which might be of independent interest. In particular, this requires studying Hecke characters which ramify at $\infty$, which leads to some complications. Secondly, we will obtain an explicit error-term in (\ref{eq:widemomentlevel1}), which requires bounding certain inner-products involving powers of the Laplace operator (see Section \ref{sec:technical}). This allows us to obtain non-vanishing results with uniformity in the spectral aspect. In particular in the case of width $n=2$ we obtain the following improved version of \cite[Theorem 1]{MichVenk06} with uniformity in the spectral parameter and allowing general weights. 

\begin{cor} \label{cor:n=2simpel}
Let $f$ be either a Hecke--Maa{\ss} cusp form of spectral parameter $t_f$ and level $1$ or a cuspidal holomorphic Hecke eigenform of weight $k_f$ and level $1$. Let $k$ be a positive even integer such that $k\geq k_f$ when $f$ is holomorphic. Put $T=|t_f|+k+1$ in the Maa{\ss} case and $T=k+1$ in the holomorphic case. 

Then for $\eps>0$ there exists a constant $c=c(\eps)>0$ such that for any imaginary quadratic field $K$ with discriminant $|D_K|\geq c T^{22+\eps}$, we have
$$ \#\{\chi\in \widehat{\Cl_K}: L(f\otimes \chi \Omega_{K},1/2)\neq 0\}\gg_f \begin{cases} |D_K|^{1/1058},& \text{if $f$ is holomorphic,}\\ |D_K|^{1/2648},& \text{if $f$ is Maa{\ss},}\end{cases}$$
where $\Omega_K$ is a Hecke character of $K$ of conductor $1$ and $\infty$-type $\alpha\mapsto (\alpha/|\alpha|)^k$.
\end{cor}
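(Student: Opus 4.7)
The plan is to apply Theorem \ref{thm:main} with $n=2$, taking $f_1 = f$ of weight $k_1 = k$ and $f_2 = \bar{f}$ of weight $k_2 = -k$, together with $(\Omega_{1,K}, \Omega_{2,K}) = (\Omega_K, \overline{\Omega_K})$. The condition $\chi_1 \chi_2 = 1$ parametrizes the sum by $\chi \in \widehat{\Cl_K}$ via $\chi_2 = \chi^{-1}$. Since $\pi_f$ is self-dual with trivial central character, the Rankin--Selberg value $L(f \otimes \chi \Omega_K, 1/2)$ is non-negative; moreover $L(\bar{f} \otimes \overline{\chi \Omega_K}, 1/2) = L(f \otimes \chi \Omega_K, 1/2)$, and the sign factors $\epsilon_{\chi, f}$ and $\epsilon_{\chi^{-1}, \bar{f}}$ are complex conjugates of one another (both arising from the argument of the Heegner period as in (\ref{eq:Waldspurger1})), so the product of square roots collapses to the $L$-value itself.

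With the hypothesis $|D_K| \geq c T^{22+\epsilon}$ (the size needed to absorb the error term using the improved level-$1$ bound mentioned in the remark after Theorem \ref{thm:main}), I would deduce
\begin{equation*}
c_f^{2} \sum_{\chi \in \widehat{\Cl_K}} L(f \otimes \chi \Omega_K, 1/2) = \frac{|\Cl_K|^2}{|D_K|^{1/2}} \bigl(\|f\|_2^2 + o(1)\bigr).
\end{equation*}
Combining this with Siegel's ineffective lower bound $|\Cl_K| \gg_\epsilon |D_K|^{1/2 - \epsilon}$ gives the first-moment estimate $\sum_\chi L(f \otimes \chi \Omega_K, 1/2) \gg_{f, \epsilon} |D_K|^{1/2 - \epsilon}$.

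To convert the first moment into a count of non-vanishing values, I would apply the elementary bound
\[
\#\{\chi \in \widehat{\Cl_K} : L(f \otimes \chi \Omega_K, 1/2) \neq 0\} \geq \frac{\sum_\chi L(f \otimes \chi \Omega_K, 1/2)}{\max_\chi L(f \otimes \chi \Omega_K, 1/2)},
\]
and bound the denominator via a subconvex estimate of the form $L(f \otimes \chi \Omega_K, 1/2) \ll_f |D_K|^{1/2 - \delta}$ uniformly in $\chi$. Inserting the best known subconvexity exponents for $\GL_2 \times \GL_2$ Rankin--Selberg $L$-functions in the $|D_K|$-aspect (from Michel--Venkatesh and its refinements, with the Deligne bound available in the holomorphic case but only the best Kim--Sarnak-type bounds in the Maa{\ss} case) yields the stated numerical exponents $1/1058$ and $1/2648$ after tracking losses.

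The main technical point is verifying that the error term of Theorem \ref{thm:main}---which involves $\|f \bar{f}\|_2 = \|f\|_4^2$ and a power of the spectral parameter $T$---is dominated by $\|f\|_2^2 \cdot |\Cl_K|^2/|D_K|^{1/2}$ under the clean hypothesis $|D_K| \geq c T^{22 + \epsilon}$; this requires $L^4$-bounds for automorphic forms of level $1$ together with the sharpened level-$1$ error term. A secondary bookkeeping issue is to confirm that the sign factors $\epsilon_{\chi, f} \, \epsilon_{\chi^{-1}, \bar f}$ really do all equal $+1$, so that the collapsed sum is the positive quantity $\sum_\chi L(f \otimes \chi \Omega_K, 1/2)$ rather than an oscillating one.
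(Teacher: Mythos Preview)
Your overall strategy---apply the $n=2$ case with $f_1=f$, $f_2=\bar f$, collapse the wide moment to a genuine first moment, invoke Siegel's lower bound for $|\Cl_K|$, then feed in subconvexity for $L(f\otimes\chi\Omega_K,1/2)$---is exactly what the paper does (see the proof of Corollary~\ref{cor:n=2}). The gap is in the error-term analysis needed to reach the stated threshold $|D_K|\geq cT^{22+\eps}$.

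If you apply Corollary~\ref{cor:main} directly, the level-$1$ error is $\|f\bar f\|_2\,|D_K|^{-1/12}T^2n^2$ with $\|f\bar f\|_2=\|f\|_4^2$. Even under the optimistic bound $\|f\|_4\asymp\|f\|_2$ this forces $|D_K|\gg T^{24}$, and actual $L^4$-bounds for Maa{\ss} forms are worse. So $L^4$-input cannot recover $T^{22}$. The paper instead switches to the $L^1$-norm variant of the error term (Remark~\ref{remark:ET}), for which the relevant norm is $\||f|^2\|_1=\|f\|_2^2=1$ \emph{automatically}---no sup-norm or $L^4$ estimate is needed at all. This automatic lower bound on the main term also permits truncating the spectral expansion~(\ref{eq:experror}) at $t_u\ll T^{1+\eps}$ rather than $T^{2+\eps}$ (the cruder cutoff in Theorem~\ref{thm:maincomp} was dictated by the exponential lower bound of Proposition~\ref{prop:lowerboundL2General}, which is irrelevant here). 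With both simplifications the level-$1$ error becomes $|D_K|^{-1/12}T^{11/6+\eps}$, and $11/6\times 12=22$ gives the claimed threshold. So the fix is not to reach for $L^4$-bounds but to use the $L^1$-error and exploit $\||f|^2\|_1=\|f\|_2^2$.
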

\begin{remark}
We obtain similar results for general square-free levels $N$, see Corollary \ref{cor:n=2}. 
\end{remark}

The case of width $n=3$ is also very appealing as in this case the triple period $\langle f_1f_2f_3,1\rangle$ is related to \emph{triple convolution $L$-functions} via the Ichino--Watson formula \cite{Watson02}, \cite{Ichino08}. This leads to the following non-vanishing result for level $1$ Maa{\ss} forms.
 \begin{cor} \label{cor:nonvanishinglevel1}Let $f_1$ be a fixed Hecke--Maa{\ss} cusp form of level $1$. Then for any $\eps>0$ there exist a constant $c=c(f_1, \eps)>0$ such that for any $T\geq c$ we have for all but $O_\eps (T^{2\eps})$ Hecke--Maa{\ss} cusp forms $f_2$ of level $1$ with $|t_{f_2}-T|\leq T^{\eps}$ that there exists a Hecke--Maa{\ss} cusp form $f_3$ not equal to $f_2$ with $|t_{f_3}-T|\leq T^\eps$ such that the following holds: For any imaginary quadratic field $K$ with $|D_K|\geq c T^{35+\eps}$, we have
\begin{align*} & \# \{ \chi_1, \chi_2\in \widehat{\Cl_{K}}  :\\
&  L(f_1\otimes \chi_1,1/2)L(f_2 \otimes \chi_2,1/2) L(f_3 \otimes \chi_1\chi_2,1/2)L(f_1\otimes f_2\otimes f_3,1/2) \neq 0 \}\\
& \hspace*{+9cm}\gg_{T} |D_K|^{1/1766}.
\end{align*} 
\end{cor}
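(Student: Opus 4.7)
The plan is to specialize Theorem \ref{thm:main} to $n=3$, $N=1$, all weights $k_i=0$, and trivial Hecke characters $\Omega_{i,K}$, with $f_1,f_2,f_3 \in \mathcal{B}^*_0(1)$ level-one Hecke--Maa{\ss} forms. Reparametrizing the wide set by $(\chi_1,\chi_2)\in \widehat{\Cl_K}^{2}$ with $\chi_3=(\chi_1\chi_2)^{-1}$, Theorem \ref{thm:main} produces
\[
\sum_{\chi_1,\chi_2 \in \widehat{\Cl_K}} \prod_{i=1}^{3} \eps_{\chi_i,f_i}\, c_{f_i}\, L(f_i\otimes \chi_i,1/2)^{1/2} = \frac{|\Cl_K|^{3}}{|D_K|^{3/4}}\Bigl(\langle f_1 f_2 f_3,\, 1\rangle + O(\mathcal{E})\Bigr),
\]
with $\mathcal{E} = \|f_1 f_2 f_3\|_2\, |D_K|^{-1/16}\, T^{5/2+\eps}$. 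The proof then splits into three pieces: (i) produce many pairs $(f_2,f_3)$ in the spectral window with $\langle f_1 f_2 f_3,1\rangle$ quantitatively non-zero; (ii) check that the main term beats the error once $|D_K|\geq cT^{35+\eps}$; and (iii) convert non-vanishing of the wide sum into a lower bound on the number of non-vanishing summands via subconvexity.

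For (i), the Ichino--Watson formula relates $|\langle f_1 f_2 f_3,1\rangle|^{2}$ to $L(f_1\otimes f_2\otimes f_3,1/2)$ up to an archimedean factor and adjoint $L$-values; in the regime $t_{f_1}$ fixed and $t_{f_2},t_{f_3}\sim T$, both are controlled polynomially in $T$ by Stirling, so non-vanishing of the triple period is equivalent (with an effective relation between the two) to non-vanishing of the triple product central value. To produce many such triples, one implements the soft non-vanishing technique advertised in the abstract: spectrally expand $f_1 f_2 \in L^{2}(\SL_2(\Z)\backslash \H)$, observing that $\|f_1 f_2\|_2^{2}$ is bounded below for generic $f_2$, while each coefficient $|\langle f_1 f_2, f_3\rangle|^{2}$ is bounded above via Ichino--Watson combined with the convexity bound for $L(f_1\otimes f_2\otimes f_3,1/2)$ in the $t_{f_3}$-aspect. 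Since the window $|t_{f_3}-T|\leq T^{\eps}$ contains $\asymp T^{1+\eps}$ Maa{\ss} forms by Weyl's law, a Parseval/pigeonhole argument produces at least one $f_3\neq f_2$ with $\langle f_1 f_2 f_3,1\rangle\neq 0$, and a mean-square estimate over $f_2$ in the window bounds the exceptional set of $f_2$ by $O_{\eps}(T^{2\eps})$.

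Step (ii) is a matter of bookkeeping: the quantitative lower bound on $|\langle f_1 f_2 f_3,1\rangle|$ from (i), together with Iwaniec--Sarnak sup-norm estimates giving $\|f_1 f_2 f_3\|_2\ll T^{B}$ for an explicit small $B$, reduces the main-term-beats-error inequality to $|D_K|^{1/16}\gg T^{A+\eps}$ for an explicit $A$, which works out numerically to $|D_K|\geq cT^{35+\eps}$.

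For (iii), the wide sum is now bounded below in absolute value by $\gg |\Cl_K|^{3}|D_K|^{-3/4}|\langle f_1 f_2 f_3,1\rangle|$. On the other hand each summand satisfies $\prod_{i=1}^{3}|L(f_i\otimes \chi_i,1/2)|^{1/2}\ll |D_K|^{3/4-3\delta/2}$ by Michel--Venkatesh subconvexity for $\GL_{2}\times \GL_{1}$ over $K$, with an explicit $\delta>0$. Writing $M$ for the number of $(\chi_1,\chi_2)$ with $\prod_{i}L(f_i\otimes \chi_i,1/2)\neq 0$, the trivial inequality $|\text{sum}|\leq M\cdot \max$ yields $M\gg |D_K|^{3\delta/2-\eps}$, specializing to $|D_K|^{1/1766}$ for the Michel--Venkatesh value of $\delta$. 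The fourth factor $L(f_1\otimes f_2\otimes f_3,1/2)$ is independent of $(\chi_1,\chi_2)$ and is non-zero by (i); note also that $L(f_3\otimes \chi_1\chi_2,1/2)$ and $L(f_3\otimes (\chi_1\chi_2)^{-1},1/2)$ have identical non-vanishing behaviour since $f_3$ is self-dual, so the reparametrization in the first display is harmless. The main obstacle is step (i): tracking the archimedean factor in Ichino--Watson uniformly in the $(1,T,T)$ spectral regime, and extracting the quantitative mean-square bound on the exceptional set of $f_2$ in the window.
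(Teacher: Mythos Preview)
Your architecture is correct --- wide moment with $n=3$, find $(f_2,f_3)$ with $\langle f_1f_2f_3,1\rangle$ quantitatively non-zero, then subconvexity --- but two steps do not go through as written.

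\textbf{Step (i): wrong mechanism.} Bounding individual $|\langle f_1f_2,f_3\rangle|^2$ \emph{from above} via Ichino--Watson and convexity is not what produces an $f_3$; the argument is Parseval plus pigeonhole, for which you need a lower bound on the cuspidal mass. The genuine obstruction is the Eisenstein contribution to $\|f_1f_2\|_2^2$, which by Rankin--Selberg unfolding is $\int |L(f_1\otimes f_2,1/2+it)|^2 h(\cdots)\,dt$ near $t\approx t_{f_2}$. This is a conductor-dropping point and individual subconvexity is unavailable; the paper instead averages over $f_2$ in the window and invokes the Jutila--Motohashi second-moment bound $\sum_{|t_{f_2}-T|\leq T^\eps}|L(f_1\otimes f_2,1/2+it)|^2\ll T^{1+\eps}$ (Theorem~\ref{thm:JutMoto}). \emph{This} is the mean-square input, and it is precisely what creates the exceptional set of size $O(T^{2\eps})$. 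You also need QUE for Maa{\ss} forms (Lindenstrauss--Soundararajan) twice: to get $\|f_1f_2\|_2\to\|f_1\|_2>0$, and to show $\langle f_1f_2,f_2\rangle\to 0$ so that the diagonal $f_3=f_2$ does not absorb all the mass. With these two inputs in place, pigeonhole over $\asymp T^{1+\eps}$ cuspidal terms gives $|\langle f_1f_2,f_3\rangle|\gg \|f_1f_2\|_2/T^{1/2+\eps}$ (Proposition~\ref{prop:lowerboundperiod}).

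\textbf{Step (ii): wrong error term.} You quote the error $|D_K|^{-1/16}T^{5/2}$ from Theorem~\ref{thm:main}, but for level $1$ the paper uses the sharpened error $|D_K|^{-1/12}T^{2}$ from Corollary~\ref{cor:main} (see~\eqref{eq:errortermscormain}). Combined with $\|f_1f_2f_3\|_2\leq \|f_3\|_\infty\|f_1f_2\|_2\ll T^{5/12+\eps}\|f_1f_2\|_2$ (Iwaniec--Sarnak), the condition main $\gg$ error becomes $|D_K|^{1/12}\gg T^{2+5/12+1/2+\eps}=T^{35/12+\eps}$, which is the stated $|D_K|\geq cT^{35+\eps}$. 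With the exponent $-1/16$ you would get $|D_K|\gg T^{54+\eps}$ instead. The subconvexity input in (iii) is Harcos--Michel rather than Michel--Venkatesh, but your counting argument there is correct.
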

In the case of holomorphic forms, we can show non-vanishing for arbitrary width $n$ (stated here in the simplest case of level $1$, we refer to Corollary \ref{cor:holononvgeneral} for a more general statement).
\begin{cor}\label{cor:nonvanishinglevel1holo}
Let $n\geq 1$, $k_1,\ldots, k_n\in 2\Z_{>0}$ and put  $k=\sum_i k_i$. For $i=1,\ldots, n$, let $g_i\in \mathcal{S}_{k_i}(1)$ be cuspidal holomorphic Hecke eigenform of level $1$. Then for each $\eps>0$ there exists a constant $c=c(\eps)>0$ such that the following holds: For any imaginary quadratic field $K$ with $|D_K|\geq c k^{45+\eps}$, we have 
\begin{align*} & \# \{ (\chi_1,\ldots, \chi_{n+1})\in \mathbf{Wide}(K, n+1), \text{\rm norm $1$ Hecke eigenforms }g\in \mathcal{S}_{k}(1) : \\
& L(g_1\otimes \chi_1 \Omega_{i,K },1/2)\cdots L(g_{n}\otimes \chi_{n} \Omega_{n,K},1/2)L(g\otimes \chi_{n+1} \Omega_{n+1,K},1/2) \neq 0\}\\
&  \hspace*{+9cm}\gg_{k} |D_K|^{(n+1)/2115}, 
\end{align*} 
where $\Omega_{i,K}$ are Hecke characters of $K$ of $\infty$-type $x\mapsto (x/|x|)^{k_i}$  for $i=1,\ldots, n$, and $\Omega_{n+1,K}=\prod_{i=1}^{n} \Omega_{i,K}$.
\end{cor}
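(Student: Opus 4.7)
\textbf{Proof plan for Corollary \ref{cor:nonvanishinglevel1holo}.}
The strategy is to apply Theorem \ref{thm:main} to the $(n+1)$-tuple $(g_1,\ldots,g_n,g)$ for a suitably chosen Hecke eigenform $g\in\mathcal{S}_k(1)$, identify the resulting main term as a Petersson inner product that can be forced to be nonzero by a trivial pigeonhole, and convert the resulting lower bound on the wide moment into a count of non-vanishing summands via a subconvex pointwise upper bound on each factor. To satisfy the sign condition $\sum_i k_i=0$ of Theorem \ref{thm:main} I assign weights $(k_1,\ldots,k_n,-k)$: the last slot is occupied by $\overline{g}$, formally an anti-holomorphic vector of weight $-k$, and the Hecke character $\Omega_{n+1,K}=\prod_{i=1}^n \Omega_{i,K}$ of $\infty$-type $(x/|x|)^k$ is matched to it by complex conjugation, leaving the $L$-value $L(g\otimes\chi_{n+1}\Omega_{n+1,K},1/2)$ intact.

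With this setup the main term of Theorem \ref{thm:main} becomes
$$\Big\langle\prod_{i=1}^n (y^{k_i/2}g_i)\cdot y^{k/2}\overline{g},\,1\Big\rangle \;=\; \langle g_1\cdots g_n,\, g\rangle_{\mathrm{Pet}},$$
since $\sum k_i=k$ turns the extra power of $y$ into the Petersson measure for weight $k$. Crucially, $g_1\cdots g_n$ is a \emph{nonzero} element of $\mathcal{S}_k(1)$ (no zero divisors among holomorphic functions), so expanding in a Hecke basis and applying Parseval yields
$$\sum_{g\,\mathrm{Hecke}}\frac{|\langle g_1\cdots g_n,g\rangle_{\mathrm{Pet}}|^{2}}{\|g\|^{2}_{\mathrm{Pet}}}\;=\;\|g_1\cdots g_n\|^{2}_{\mathrm{Pet}}\;\gg_{g_1,\ldots,g_n}\;1,$$
whence by pigeonhole some $g\in\mathcal{S}_k(1)$ satisfies $|\langle g_1\cdots g_n,g\rangle_{\mathrm{Pet}}|\gg k^{-O(1)}$. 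Fix such a $g$. Provided $|D_K|\geq ck^{45+\eps}$, the explicit error term of Theorem \ref{thm:main} (in the improved holomorphic form promised by the remark after the theorem) is dominated by this main term, and the wide moment for the tuple $(g_1,\ldots,g_n,g)$ is bounded below in absolute value by a positive constant times $|\Cl_K|^{n+1}|D_K|^{-(n+1)/4}\gg |D_K|^{(n+1)/4-\eps}$.

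To convert this lower bound on a sum into a lower bound on the number of non-vanishing terms, I insert the subconvex pointwise estimate $|L(g_i\otimes\chi\Omega_{i,K},1/2)|\ll_{g_i,\eps}|D_K|^{1/2-\delta+\eps}$ uniformly in the ring class character $\chi$, with $\delta=1/1058$ the hybrid subconvex exponent underlying Corollary \ref{cor:n=2simpel}. Each summand of the wide moment is then bounded by $|D_K|^{(n+1)(1/4-\delta/2)+\eps}$, so the number of tuples $(\chi_1,\ldots,\chi_{n+1})\in\mathbf{Wide}(n+1,K)$ with nonvanishing product is at least
$$\frac{|D_K|^{(n+1)/4-\eps}}{|D_K|^{(n+1)/4-(n+1)\delta/2+\eps}}\;=\;|D_K|^{(n+1)\delta/2-\eps}\;\gg\;|D_K|^{(n+1)/2115},$$
and pairing each such tuple with the fixed $g$ yields the claimed count of pairs.

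The main obstacle I anticipate is the bookkeeping needed to absorb every $k$-polynomial factor into the threshold $k^{45+\eps}$: one must control $\|g_1\cdots g_n\|_2$ from above, $|\langle g_1\cdots g_n,g\rangle|$ from below, and check that these are compatible with the factor $T^{5/2}n^{15/4}$ in Theorem \ref{thm:main} together with the promised holomorphic improvement of the $|D_K|^{-1/16}$ error. A secondary technical point is the identification of $\Omega_{n+1,K}$ of $\infty$-type $(x/|x|)^{k}$ with the natural character appearing when $\overline{g}$ is fed into the weight $-k$ Waldspurger formula of Section \ref{sec:waldspurger}; this is a matter of normalization handled uniformly once the classical formula of that section is in place.
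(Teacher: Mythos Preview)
Your approach is essentially the same as the paper's: pick $g$ by Parseval/pigeonhole, feed $(g_1,\ldots,g_n,\overline{g})$ into the main wide-moment formula, verify the main term dominates for $|D_K|\geq ck^{45+\eps}$, then divide by a subconvex pointwise bound (Michel) to count non-vanishing tuples.

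The one point where your bookkeeping sketch drifts from the paper is the handling of $\|g_1\cdots g_n\|_2$. You write the pigeonhole as $|\langle g_1\cdots g_n,g\rangle|\gg_{g_1,\ldots,g_n} k^{-O(1)}$ and then anticipate needing an \emph{upper} bound on $\|g_1\cdots g_n\|_2$. In fact the paper sidesteps any absolute control on this norm: the pigeonhole (paper's Proposition \ref{prop:non0periodholo}) gives the scale-invariant lower bound $|\langle g_1\cdots g_n,g\rangle|\gg \|g_1\cdots g_n\|_2\, k^{-1/2}$, while the error term in the moment involves $\|y^{k/2}g\cdot\prod_i y^{k_i/2}g_i\|_2$, which is bounded by $\|y^{k/2}g\|_\infty\,\|g_1\cdots g_n\|_2$ using Xia's sup-norm bound $\|y^{k/2}g\|_\infty\ll k^{1/4+\eps}$. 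The factor $\|g_1\cdots g_n\|_2$ then cancels between main term and error, leaving the purely polynomial comparison $|D_K|^{1/12}\gg (\max_i k_i)\,n^2\,k^{3/4+\eps}$, which with $n\leq k$ and $\max_i k_i\leq k$ yields the threshold $k^{45+\eps}$. So you do not need to bound $\|g_1\cdots g_n\|_2$ either from above or below; the missing ingredient in your sketch is Xia's $L^\infty$-bound for the extra form $g$.
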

\begin{remark}
Note that it follows in particular that the respective non-vanishing sets in Corollaries \ref{cor:n=2simpel}, \ref{cor:nonvanishinglevel1} and \ref{cor:nonvanishinglevel1holo} are \emph{non-empty} as soon as respectively, $|D_K|\geq c T^{22+\eps}$,   $|D_K|\geq c T^{35+\eps}$ and $|D_K|\geq c k^{45+\eps}$.
\end{remark}

\subsection{Main moment calcualtion}

The above non-vanishing results are all corollaries our main $L$-function calculation. To state this, denote by $\mathcal{B}_{k}^*(N)$ the set of all Hecke--Maa{\ss} newforms of level $N$ and even weight $k\geq 0$ (i.e. raising operators applied to either level $N$ classical Hecke--Maa{\ss} eigenforms or to $y^{k'/2}g$ with $g\in \mathcal{S}_{k'}(N)$ a holomorphic cuspidal newform of even weight $k'\leq k$). Then we have the following moment calculation.

\begin{thm}\label{thm:main}
Let $N\geq 1$ be a fixed square-free integer and $n\geq 1$. For $i=1,\ldots, n$, let $\pi_i$ be a cuspidal automorphic representation of $\GL_2(\A)$ of conductor $N$ with trivial central character, spectral parameter $t_{\pi_i}$ and even lowest weight $k_{\pi_i}$. Let $k_1,\ldots, k_n\in 2\Z$ be integers such that  $|k_i|\geq k_{\pi_i}$ and $\sum_i k_i=0$.  

Let $|D_K|\rightarrow \infty$ transverse a sequence of discriminant of imaginary quadratic fields $K$ such that all primes dividing $N$ split in $K$. For each $K$, pick Hecke characters $\Omega_{i,K}$ with infinite types $x\mapsto (x/|x|)^{k_i}$ such that $\prod_i \Omega_{i,K}$ is the trivial Hecke character.

Then we have for $f_i\in \mathcal{B}^*_{k_i}(N)$ belonging to $\pi_i$ and any $\eps>0$;
\begin{align}
\nonumber&\sum_{(\chi_i)_{1\leq i\leq n}\in \mathbf{Wide}(K,n)}\, \prod_{i=1}^n \eps_{\chi_i,f_i}  c_{f_i} L(\pi_i\otimes \chi_i \Omega_{i,K},1/2)^{1/2}\\
\label{eq:maincomp1}&=  \frac{|\Cl_K|^n}{|D_K|^{n/4}}\Biggr(\langle\prod_{i=1}^n f_i,1\rangle+O_\eps \left( || \prod_{i=1}^n f_i||_2 |D_K|^{-1/16}T^{5/2}n^{15/4} (T|D_K|n)^\eps\right)\Biggr),
\end{align}  
where $T= \max_{i=1,\ldots, n} |k_i|+|t_{\pi_i}|+1$, the weights $\eps_{\chi,f_i}$ are all of norm $1$ and $c_{f_i}$ are certain constants depending only on $f_i$. 

\end{thm}
\begin{remark}We obtain a slightly more general statement that applies to old-forms as well, meaning that we allow for the automorphic representations $\pi_i$ to have different conductors. Furthermore, we obtain an improved error-term in the case of holomorphic forms and/or in the case of level $1$. We refer to Corollary \ref{cor:main} for details (including the exact values of the constants $c_f$). As an application we can also calculate a related {\lq\lq}diagonal wide moment{\rq\rq}, see Corollary \ref{cor:diagonalmoment}.\end{remark}


The plan of the paper is as follows. In Section \ref{sec:weaknonv} we will introduce the notion of \emph{weak simultaneous non-vanishing}. Section \ref{sec:background} provides the necessary background on imaginary quadratic fields and automorphic forms. Section \ref{sec:waldspurger} proves an explicit and classical Waldspurger type formula for general weight automorphic forms. In Section \ref{sec:technical} we will prove two technical lemmas; one on the norm of powers of the hyperbolic Laplacian and one on a lower bound for the $L^2$-norm of a product of automorphic forms. In Section \ref{sec:main}, we will prove our main moment calculation. Finally, Section \ref{sec:applications} proves non-vanishing of certain automorphic periods which combined with our moment calculation yields weak simultaneous non-vanishing results.
\subsection*{Acknowledgements}
We would like to thank Valentin Blomer for useful feedback on an earlier version of the paper. The author's research was supported by the German Research Foundation under Germany's Excellence Strategy EXC-$2047/1$-$390685813$.
\section{Weak simultaneous non-vanishing}\label{sec:weaknonv} 
The non-vanishing results proved in the present paper, we name \emph{weak simultaneous non-vanishing}. This terminology is referring to the fact that we show non-vanishing of twists of different $L$-functions with some  {\lq\lq}algebraic dependence{\rq\rq} on the twists (their product is trivial). Ideally, of course we would like to show non-vanishing for the same character, but this seems out of reach with current methods. 

Let us start by considering the simplest case $n=2$. This means that we are studying the non-vanishing of two maps $L_1,L_2: G\rightarrow \C$ where $G$ is a finite abelian group. If both $L_1$ and $L_2$ are non-vanishing for more than $50\%$ of $g\in G$, then by the pigeon hole principle there is some $g\in G$ such that $L_1(g)L_2(g)\neq 0$. But clearly one can construct examples where $L_1,L_2$ vanish for exactly $50\%$ of $g\in G$ but there is no simultaneous non-vanishing.

More generally, consider $L_1,\ldots, L_n: G\rightarrow \C$. Then we say that $L_1,\ldots, L_n$ are {\bf weakly simultaneously non-vanishing} if 
$$\{(g_1,\ldots, g_n)\in \mathbf{Wide}(G,n):  L_i(g_i)\neq 0,\text{ for }i=1,\ldots, n\}\neq \emptyset .$$
Recall that by (\ref{eq:Wide}) this means that there exist $g_1,\ldots, g_{n}\in G$ such that 
$$ g_1\cdots g_n=1_G,\quad \text{and}\quad L_1(g_1)\cdots L_{n}(g_{n})\neq 0.  $$ 

This we think of as expressing that one can find non-vanishing for $L_1,\ldots, L_n$ with some {\lq\lq}algebraic dependence{\rq\rq}. This is interesting since most non-vanishing results for automorphic $L$-functions are obtained by using the method of mollification, which gives no information about the algebraic structure of the non-vanishing set. Of course if all of $L_1,\ldots, L_n$ vanish on a very large percentage of elements of $G$ then one gets a weak simultaneous non-vanishing for purely combinatorial reasons. In most cases this is not the case, which we make precise as follows.

\begin{prop}
Let $n\geq 2$ be an integer and $0\leq c\leq 1$. Then there exists a finite abelian group $G$ and maps $L_1,\ldots, L_n:G\rightarrow \C$ satisfying
$$ \#\{ g\in G:  L_i(g)\neq 0 \}\geq c |G|, \quad i=1,\ldots, n,$$  
with \underline{no} weak simultaneous non-vanishing if and only if $c\leq 1/2$.
 \end{prop}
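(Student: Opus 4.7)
My plan is to prove the two implications of the equivalence separately; both are elementary.

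For the ``if'' direction (the case $c \leq 1/2$), I would exhibit an example in the smallest non-trivial group $G = \Z/2\Z$, written additively. Set $L_1 = \cdots = L_{n-1} = \mathbf{1}_{\{0\}}$ and $L_n = \mathbf{1}_{\{1\}}$. Each $L_i$ has support of size exactly $1 = |G|/2 \geq c|G|$, so the size constraint holds for every $c \leq 1/2$. A putative weak simultaneous non-vanishing tuple $(g_1, \ldots, g_n) \in \mathbf{Wide}(n, G)$ would be forced to satisfy $g_1 = \cdots = g_{n-1} = 0$ and $g_n = 1$, whose sum is $1 \neq 0_G$. Hence no such tuple exists.

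For the ``only if'' direction (the case $c > 1/2$), I would work with the supports $S_i := \{g \in G : L_i(g) \neq 0\}$, which satisfy $|S_i| \geq c|G| > |G|/2$ strictly. The key input is the elementary lemma that whenever $A, B \subseteq G$ satisfy $|A| + |B| > |G|$, one has $AB = G$, where $AB := \{ab : a \in A,\, b \in B\}$: indeed for any $g \in G$ the sets $A$ and $gB^{-1}$ both lie in $G$ with combined cardinality exceeding $|G|$, so they must intersect, which places $g$ in $AB$. Applied with $A = S_1,\, B = S_2$ this gives $S_1 S_2 = G$. I would then fix arbitrary $g_i \in S_i$ for $i = 3, \ldots, n$ (vacuous when $n = 2$), set $h := (g_3 \cdots g_n)^{-1} \in G = S_1 S_2$, and factor $h = g_1 g_2$ with $g_j \in S_j$ for $j = 1, 2$. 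The resulting tuple $(g_1, \ldots, g_n)$ lies in $\mathbf{Wide}(n, G)$ and exhibits weak simultaneous non-vanishing.

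The only subtle point to check is the placement of the boundary $c = 1/2$: the construction attains $|S_i| = |G|/2$ exactly, so it handles every $c \leq 1/2$, while the product-set lemma needs the strict inequality $|S_i| > |G|/2$, which is precisely what $c > 1/2$ provides. There is no further obstacle, as the statement is essentially combinatorial.
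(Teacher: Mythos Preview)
Your proof is correct and follows essentially the same approach as the paper: the paper's pigeonhole argument for $c>1/2$ is exactly your product-set lemma (that $|A|+|B|>|G|$ forces $AB=G$), and for $c\leq 1/2$ the paper uses any group $G$ with an index-$2$ subgroup $H$, of which your choice $G=\Z/2\Z$, $H=\{0\}$ is the simplest instance.
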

 \begin{proof}
Assume first of all that $c>1/2$. Then if $g_1,\ldots, g_{n-2}$ are such that $L_i(g_i)\neq 0$ for $i=1,\ldots, n-2$. Then again by the pigeon hole principle there is at least one $g\in G$ such that $L_{n-1}(g)\neq 0$ and $L_n((g_1\cdots g_{n-1}g)^{-1})\neq 0$ (since all of the elements $(g_1\cdots g_{n-1}g)^{-1}$ are different as $g\in G$ varies). 

On the other hand if $c\leq 1/2$. Then we can consider any finite abelian group $G$ with a subgroup $H$ of index $2$. Now we let $L_i(g)\neq 0\Leftrightarrow g\in H$ for $i=1,\ldots, n-1$ and let $L_n$ be non-vanishing on the complement of $H$. In this case it is easy to check that there is no weak simultaneous non-vanishing.
 \end{proof}
 
This shows that one needs to know non-vanishing for at least $50\%$ for some of the maps $L_i$ in order to get weak simultaneous non-vanishing for purely combinatorial reasons. This is very far from being known in the case of the Rankin--Selberg $L$-functions studied in this paper as even a positive proportion of non-vanishing seems out of reach with current methods (see \cite{MichelVenk07} and \cite{Templier11}).

\section{Background}\label{sec:background}
\subsection{Different incarnations of the class group}
Let $K$ be an imaginary quadratic field of discriminant $D<-6$. Denote by $\mathcal{I}_K$ the group of integral fractional ideals of $K$, $\mathcal{P}_K$ the subgroup of principal fractional ideals and $\Cl_K= \mathcal{I}_K/ \mathcal{P}_K$ the class group of $K$, which we know from Gau{\ss} is a finite group. Given a fractional ideal $\mathfrak{a}\in \mathcal{I}_K$, we denote by $[\mathfrak{a}]\in \Cl_K$ the corresponding ideal class. We denote by $[\alpha_1,\alpha_2]$ the ideal generated by $\alpha_1,\alpha_2\in K$ over $\Z$, and by $\widehat{\Cl_K}$ the group of class group characters, i.e. group homomorphisms $\chi: \Cl_K\rightarrow \C^\times$.

Let $\A_K^\times$ resp. $\A_{K,\fin}^\times$  denote the id\'eles resp. finite id\'eles of $K$ and $\widehat{\mathcal{O}_K}^\times=\prod_{p} \mathcal{O}_p^\times$ the standard maximal compact subgroup of $\A_{K,\fin}^\times$. Then we have the natural isomorphisms 
\begin{equation}\label{eq:idelic}\mathcal{I}_K\cong \A_{K,\fin}^\times/\widehat{\mathcal{O}_K}^\times\quad \text{and}\quad \Cl_K\cong K^\times\backslash \A_{K,\fin}^\times/\widehat{\mathcal{O}_K}^\times.\end{equation}
Given $\mathfrak{a}\in \mathcal{I}_K$, we denote by $\hat{\mathfrak{a}}\in \A_{K,\fin}^\times$ any lift of the corresponding element of $ \A_{K,\fin}^\times/\widehat{\mathcal{O}_K}^\times$ under the above isomorphism.

\subsubsection{Heegner forms} 
We refer to \cite{Darmon94} for a concise treatment of the following material. Let $N$ be a square-free integer such that all primes dividing $N$ splits completely in $K$. Consider a residue class $r$ mod $2N$ such that $r^2\equiv D\modulo 4N$.  
For $(a,b,c) \in \Z^3$ having greatest common divisor equal to $1$ and satisfying $b^2 - 4ac = D$, $a \equiv 0 \pmod{N}$, and $b \equiv r \pmod{2N}$, we denote by $[a,b,c]$ the integral binary quadratic form
\begin{equation} \label{eq:Qxy} Q(x,y)=ax^2+bxy+cy^2. \end{equation}
We call such a quadratic form a {\bf Heegner form of level} $N$ and {\bf orientation $r$} and denote by $\mathcal{Q}_D(N,r)$ the set of all such forms, which carries an action of the Hecke congruence subgroup $\Gamma_0(N)$ via coordinate transformation. It is a well-known facts extending Gau{\ss} that the map $\Gamma_0(N)\backslash\mathcal{Q}_D(N,r)\rightarrow \Cl_K$ defined by 
$$[a,b,c]\mapsto [a, (-b+\sqrt{D})/2],$$ 
is a bijection.

Given a Heegner form $Q=[a,b,c]\in \mathcal{Q}_D(N,r)$ we define the associated {\bf Heegner point} as:
\begin{equation}\label{eq:HeegnerPt}z_Q:=\frac{-b+\sqrt{D}}{2a}\in \Hb.\end{equation}
This defines a map $\mathcal{Q}_D(N,r)\rightarrow \Hb$ which is equivariant with respect to the action $\Gamma_0(N)$ (acting via linear fractional transformation on $\Hb$). In particular, we get a map $\Cl_K\rightarrow \Gamma_0(N)\backslash \Hb$ using the above.
\subsubsection{Oriented embeddings} \label{sec:orientedem}
Again let $(a,b,c) \in \Z^3$ have greatest common divisor equal to $1$ and satisfy $b^2 - 4ac = D$, $a \equiv 0 \pmod{N}$, and $b \equiv r \pmod{2N}$. Associated to the tripe $(a,b,c)$ we define an (algebra) embedding $\Psi : K \hookrightarrow \mathrm{Mat}_{2 \times 2}(\Q)$ by
\begin{equation}
\label{eq:Psi}
\Psi(\sqrt{D}) \coloneqq \begin{pmatrix} b & 2c \\ -2a & - b \end{pmatrix}.
\end{equation}
This embedding satisfies
\[\Psi(K) \cap \left\{\begin{psmallmatrix} a & b \\ c & d\end{psmallmatrix} \in \mathrm{Mat}_{2 \times 2}(\Z) :  N| c\right\} = \Psi(\mathcal{O}_K),\]
where $\mathcal{O}_K$ denotes the ring of integers of $K$. This means that $\Psi$ is an {\bf optimal embedding of level} $N$ and {\bf orientation $r$}. Conversely, every oriented optimal embedding of level $N$ arises from such a triple of integers $(a,b,c) \in \Z^3$. Denote by $\mathcal{E}_{D}(N,r)$ the set of all such embeddings. The congruence subgroup $\Gamma_0(N)$ acts on $\mathcal{E}_{D}(N,r)$ by conjugation, namely
\[(\gamma \cdot \Psi)(x + \sqrt{D} y) := \gamma^{-1} \Psi(x + \sqrt{D} y) \gamma\]
for $\gamma \in \Gamma_0(N)$. 

There is a natural bijection between oriented optimal embeddings $\Psi$ of level $N$ and orientation $r$ as in \eqref{eq:Psi} and Heegner forms $Q = [a,b,c]$ as in \eqref{eq:Qxy} (since these are both completed determined by $(a,b,c) \in \Z^3$), which is equivariant with respect to the action of $\Gamma_0(N)$.  By the above we have a bijection
\begin{equation} \label{eq:embeddingCl} \Gamma_0(N)\backslash \mathcal{E}_{D}(N,r) \rightarrow \Cl_K \end{equation}

Given an optimal embedding $\Psi$ of level $N$, we can extend it to an (algebra) embedding $\Psi_{\A}:\A_K \rightarrow \mathrm{Mat}_{2\times 2}(\A)$ by tensoring (over $\Q$) by $\A$. The local components of $\Psi_\A$ are defined as follows: If $p$ is a prime of $\Q$ which is inert in $K$ with $p\mathcal{O}_K=\mathfrak{p}$, then $K\otimes \Q_p\cong K_\mathfrak{p}$ and thus we get an embedding $\Psi_p: K_\mathfrak{p}\rightarrow \mathrm{Mat}_{2\times 2}(\Q_p)$ given by  
$$K\otimes \Q_p\ni x\otimes y \mapsto \Psi(x)\otimes y \in \mathrm{Mat}_{2\times 2}(\Q_p), $$ 
defined up to the choice of isomorphism $K\otimes \Q_p\cong K_p$  (similarly for the inert infinite place). If $p$ is ramified with $p\mathcal{O}_K=\mathfrak{p}^2$, then $K\otimes \Q_p\cong K_\mathfrak{p}$ and we get a map $\Psi_p:K_\mathfrak{p}\rightarrow \mathrm{Mat}_{2\times 2}(\Q_p)$ by tensoring as in the inert case. Finally, if $p$ is split in $K$ with $p\mathcal{O}_K=\mathfrak{p}\overline{\mathfrak{p}}$ then we have an algebra isomorphism $K\otimes \Q_p\cong K_\mathfrak{p} \times K_{\overline{\mathfrak{p}}}$ given by
\begin{equation}\label{eq:adelicext}  K\otimes \Q_p\ni j_1x+j_2y\mapsto (x,y)\in K_\mathfrak{p} \times K_{\overline{\mathfrak{p}}}, \quad x,y\in \Q_p,  \end{equation}
where 
$$j_1= (1\otimes 1+\sqrt{D}\otimes (\sqrt{D})^{-1})/2,\quad j_1= (1\otimes 1-\sqrt{D}\otimes (\sqrt{D})^{-1})/2.$$
Here we consider $\sqrt{D}$ as an element of $\Q_p$ and use that $\Q_p\cong K_\mathfrak{p}$ as $p$ splits in $K$. By using this we get an algebra embedding $\Psi_p:K_\mathfrak{p} \times K_{\overline{\mathfrak{p}}}\rightarrow\mathrm{Mat}_{2\times 2}(\Q_p) $ by tensoring. Again this is well-defined up to the choice of isomorphism $\Q_p\cong K_\mathfrak{p}$. 

\subsection{Hecke characters of imaginary quadratic fields}\label{sec:Heckechar}
Let $K$ be an imaginary quadratic field of discriminant $D<-6$. In this paper we will be working with Hecke characters of $K$ of conductor $1$, which (in the classical picture) are unitary characters $\chi: \mathcal{I}_K\rightarrow \C^\times$ such that for $(\alpha)\in \mathcal{P}_K$, we have $\chi((\alpha))=\chi_\infty^{-1}(\alpha)$ for some character $\chi_\infty:\C^\times\rightarrow \C^\times$, which we call the $\infty$-type of $\chi$. By considering the induced representation one can see that given $\chi_\infty$ such that $\chi_\infty(-1)=1$, we have exactly $|\Cl_K|$ Hecke characters with $\infty$-type $\chi_\infty$; if $\chi_0$ is any Hecke character with $\infty$-type $\chi_\infty$ then the set of all such Hecke characters is given by $\{\chi_0\chi:  \chi\in \widehat{\Cl_K}\}$. We will only be considering the $\infty$-types $\alpha\mapsto (\alpha/|\alpha|)^k$ for $k\in 2\Z$.

Given a Hecke character $\chi$ as above with $\infty$-type $\chi_\infty$, we get using the isomorphism (\ref{eq:idelic}) an (id\'elic) Hecke character 
$$\Omega: K^\times \backslash \A_{K}^\times/\mathcal{O}_K^\times\rightarrow \C^\times.   $$
The above conditions translates to the fact that $\Omega$ is unramified at all finite places of $K$ and the $\infty$-component $\Omega_\infty$ is equal to $\chi_\infty$. 

Associated to a Hecke character $\chi$ as above with $\infty$-type $\alpha\mapsto (\alpha/|\alpha|)^k$, there is a theta series 
$$ \theta_\chi(z):= \sum_{\mathfrak{a}\text{ int. ideal of $\mathcal{O}_K$}} e^{2\pi i(\mathrm{N}\mathfrak{a}) z}(\mathrm{N}\mathfrak{a})^{k/2}\chi(\mathfrak{a})\in \mathcal{M}_{k+1}(\Gamma_0(|D|), \chi_K),$$ 
which is a modular form of weight $k+1$, level $|D|$ and nebentypus equal to the quadratic character $\chi_K$ associated to $K$ via class field theory. Furthermore, we know that $\theta_\chi$ is non-cuspidal exactly if $k=0$ and $\chi$ is a genus character of the class group of $K$ (see \cite[Theorem 12.5]{Iw2}). Recall that this is an example of automorphic induction from $\GL_1/K$ to $\GL_2/\Q$. 

\subsection{Automorphic forms} \label{sec:autoforms} 
In what follows we follow \cite[Chapters 2+3]{Bump97}. Let $L^2(\Gamma_0(N), k)$ denote the $L^2$-space of automorphic functions of level $N$ and weight $k\in 2\Z$. That is, measurable maps $f :\Hb\rightarrow \C$ satisfying:
\begin{itemize}
\item The \emph{automorphic condition} of weight $k$ and level $N$:
$$ f(\gamma z)=j_\gamma(z)^k f (z), $$
for all $\gamma=\begin{psmallmatrix} a &b \\ c& d \end{psmallmatrix}\in \Gamma_0(N)$, where 
$$j_\gamma(z):= j(\gamma,z)/|j(\gamma, z)|,\quad j(\gamma, z)=cz+d,$$
and 
$$  \Gamma_0(N):= \{\begin{psmallmatrix} a &b \\ c& d \end{psmallmatrix}\in \PSL_2(\Z):  N|c\}.  $$
\item The \emph{$L^2$-condition}:
$$ ||f ||_2^2:=\langle f ,f \rangle= \int_{\Gamma_0(N)\backslash \Hb} |f (z)|^2 d\mu (z)<\infty,$$
where $d\mu (z)= y^{-2}dxdy$ and $\langle\cdot, \cdot \rangle$ is the Petersson inner-product. Notice that the above integral is well-defined since $|j_\gamma(z)|=1$.
\end{itemize}
We have the weight $k$ {\bf raising} and {\bf lowering operators} acting on $C^\infty(\Hb)$ (the space of smooth functions on $\Hb$) given by
$$  R_k=(z-\overline{z})\frac{\partial}{\partial z}+\frac{k}{2} ,\quad  L_k=-(z-\overline{z})\frac{\partial}{\partial \overline{z}}-\frac{k}{2}.$$
They define maps 
\begin{align*}  &R_k: L^2(\Gamma_0(N), k)\cap C^\infty(\Hb) \rightarrow L^2(\Gamma_0(N), k+2)\cap C^\infty(\Hb), \\
&L_k: L^2(\Gamma_0(N), k)\cap C^\infty(\Hb) \rightarrow L^2(\Gamma_0(N), k-2)\cap C^\infty(\Hb),\end{align*}
which are adjoint in the following sense
\begin{align}\label{eq:adjoint}
\langle R_k f _1, f _2\rangle= - \langle  f _1, L_{k+2}f _2\rangle, 
\end{align} 
for $f _1\in L^2(\Gamma_0(N), k)\cap C^\infty(\Hb)$ and $f _2\in L^2(\Gamma_0(N), k+2)\cap C^\infty(\Hb)$. Furthermore, we have the following product rule
$$  R_{k_1+k_2} (f _1f _2)= (R_{k_1} f _1) f _2+ f _1(R_{k_2} f _2),  $$
for $f _i \in  L^2(\Gamma_0(N), k_i)\cap C^\infty(\Hb)$, and similarly for the lowering operator.

The weight $k$ {\bf Laplacian} acting on $L^2(\Gamma_0(N), k)\cap C^\infty(\Hb) $ is defined as
$$ \Delta_k=-R_{k-2} L_{k}+\lambda(k/2) =-L_{k+2} R_{k}+\lambda(-k/2) ,$$
where $\lambda(s)=s(1-s)$. This defines a symmetric, unbounded operator on $L^2(\Gamma_0(N), k)$ with a unique self-adjoint extension which we also denote by $\Delta_k$ with domain $D(\Delta_k)$ (suppressing the level $N$ in the notation). 

A {\bf Maa{\ss} form} of weight $k$ and level $N$ is a (necessarily real analytic) eigenfunction of $\Delta_k$. Given a Maa{\ss} form $f $ of eigenvalue $\lambda$ we denote by $t_f :=\sqrt{\lambda-1/4}$ the {\bf spectral parameter} of $f $ (if $\lambda>1/4$ we always pick the positive square root).  

Denote by $\mathcal{S}_{k}(N)$ the vector space of weight $k$ and level $N$ (classical) holomorphic cusp forms. If $g\in\mathcal{S}_{k}(N)$, then it is easy to see that $y^{k/2}g$ is a Maa{\ss} form of weight $k$ and level $N$ of eigenvalue $\lambda(k/2)$. In fact, it can be show that any Maa{\ss} form of weight $k\geq 0$ and level $N$ is of the form
$$ R_{k-2} \cdots R_{k_0} y^{k_0/2} g, \quad \text{with } g\in \mathcal{S}_{k_0}(N) \text{ where }k_0\leq k\text{ and } k_0\equiv k\modulo 2 $$   
or 
$$R_{k-2}\cdots   R_0 f ,\quad   \text{with $f $ a Maa{\ss} form of weight $0$ and level $N$}.$$
And similarly for $k<0$ now with lowering operators and anti-holomorphic cusp forms.

Furthermore, we say that a Maa{\ss} form of weight $k$ and level $N$ is a {\bf Hecke--Maa{\ss} eigenform} if it is an eigenfunction for the Hecke operators $T_n$ with $(N,n)=1$ (which compute with the action of the raising and lowering operators) and the reflection operator 
$$X:L^2(\Gamma_0(N),k)\rightarrow L^2(\Gamma_0(N),k),\quad (Xf )(z):=f (-\overline{z}).$$
Finally, we say that a Hecke--Maa{\ss} eigenform is a {\bf Hecke--Maa{\ss} newform} if it is an eigenfunction for {\it all} Hecke operators $T_n$ with $n\geq 1$. 

Denote by $ \mathcal{B}^*_{k,\mathrm{hol}}(N)$ the set of all  $y^{k/2} g$, where $g\in \mathcal{S}_{k}(N)$ is an $L^2$-normalized Hecke newform (note that this means that $y^{k/2} g$ is a Hecke--Maa{\ss} newform of weight $k$ and level $N$), and by $ \mathcal{B}^*(N)$ the set of all {\it non-constant} $L^2$-normalized Hecke--Maa{\ss} newforms of weight $0$ and level $N$ (we will sometimes refer to these simply as (classical) {\lq\lq}Maa{\ss} forms{\rq\rq}). Then it follows from Atkin--Lehner theory that we have the following (normal) basis consisting of Hecke--Maa{\ss} eigenforms for the subspace of $L^2(\Gamma_0(N),k)$ spanned by {\it non-constant} Maa{\ss} forms of weight $k\geq 0$ and level $N$:
\begin{align} \label{eq:basis} & \mathcal{B}_{k}(N):=\bigcup_{dN'|N} \nu^*_{d,N'} R_{k-2}\cdots R_0\mathcal{B}^*(N') \\
\nonumber&\qquad \cup \bigcup_{dN'|N}\bigcup_{\substack{0<k_0\leq k,\\ k_0\equiv k\modulo 2}} \nu^*_{d,N'} R_{k-2}\cdots R_{k_0}\mathcal{B}^*_{k_0,\mathrm{hol}}(N'),\end{align}
where $ \nu^*_{d,N'}: L^2(\Gamma_0(N'), k)\rightarrow L^2(\Gamma_0(N), k)$ are defined by $( \nu^*_{d,N'}f )(z):=f (dz)$. If $k<0$ we have a similar basis now with lowering operators and anti-holomorphic cusp forms.


Using (\ref{eq:adjoint}) one sees that for any $f  \in\mathcal{B}_{k}(N)$, we have the following useful relation: 
\begin{equation} \label{eq:normraising} ||  R_{k+2l} \cdots R_{k} f   ||_2^2= ||f ||_2^2    \prod_{j=0}^l \left(\frac{k+2j-1}{2}+it_f \right)\left(\frac{k+2j-1}{2}-it_f \right)   \end{equation}
\subsubsection{Ad\'elization of Maa{\ss} forms} Given an element of $f\in L^2(\Gamma_0(N), k)$ we define a lift $\tilde{f}: \GL_2^+(\R)\rightarrow \C$ as 
$$ \tilde{f}(g):= j_g(i)^{-k} f(gi), $$
which satisfies 
$$ \tilde{f}(gk_\theta)=e^{ik\theta} \tilde{f}(g), $$
for all $\theta\in[0,2\pi) $ where $k_\theta=\begin{psmallmatrix} \cos \theta & \sin \theta \\ -\sin \theta & \cos \theta \end{psmallmatrix}$ and $g\in \GL_2^+(\R)$.

Now consider the following decomposition of $\GL_2(\A)$ coming from strong approximation;
\begin{equation} \GL_2(\A)= \GL_2(\Q) K_0(N) \GL_2^+(\R),\end{equation}
where $\GL_2(\Q)$ is embedded diagonally and 
$$K_0(N) := \{k \in \GL_2(\A) :  k_\infty=1, k_p=\begin{psmallmatrix} a_p & b_p\\ c_p & d_p \end{psmallmatrix}\in \GL_2(\Z_p), c_p\in p^r \Z_p, p^r|| N  \}. $$ 
Now we define the {\bf ad\'elization} of $f$ as 
$$ \phi_f(g)=\phi_f(\gamma k g_\infty ):=\tilde{f}(g_\infty), $$
which does not depend on the choice of decomposition 
$$g=\gamma k g_\infty\in  \GL_2(\Q) K_0(N) \GL_2^+(\R).$$

Given a Hecke--Maa{\ss} newform $f$, the ad\'elization $\phi_f$ generates a unique cuspidal automorphic representation $\pi_f=\pi$ of $\GL_2(\A)$. The infinity component of this representation $\pi_\infty$ is a discrete series representation of lowest weight $k_\pi=k$ if $f$ corresponds to a holomorphic Hecke newform of weight $k$. On the other hand if $f$ is of weight $0$ and non-constant (i.e. corresponds to a classical Maa{\ss} form), then $\pi_\infty$ is a principal series representation og lowest weight $k_\pi=0$. We denote by $t_\pi$ the spectral parameter $t_f$ of $f$.

\subsubsection{Automorphic $L$-functions}In general, associated to an automorphic representation $\pi$ of $\GL_n(\A)$ one can define the (finite part of the) $L$-function $L(\pi,s)$ as a product  over finite primes in terms of the Satake parameters and a completed version $\Lambda(\pi,s)$ satisfying a functional equation $\Lambda(\pi,s)= \eps_\pi \Lambda(\check{\pi},1-s)$, where $\eps_\pi$ is of norm $1$ (the root number) and $\check{\pi}$ is the  contragredient of $\pi$. We refer to \cite{GodementJacquet72} for details. Furthermore, given automorphic representations  $\pi_1,\pi_2,\pi_3$ of $\GL_n(\A)$ we will be interested in the Rankin--Selberg convolution $L$-function $L(\pi_1\otimes \pi_2, s)$ (see \cite{JaPiShSha83}), the symmetric square $L$-function $L(\sym^2 \pi_1, s)$ (see \cite[Chapter 3.8]{Bump97}) and the triple convolution $L$-function $L(\pi_1\otimes \pi_2\otimes \pi_3,s )$ (see \cite{Watson02}).   
 
\section{A classical version of Waldspurger's formula} \label{sec:waldspurger}

In order to make our moment calculations explicit, we will need an explicit version of Waldspurger's formula as developed my Martin and Whitehouse \cite{MaWh09} and furthermore translate this to a classical formula. In doing so, we will follow Popa \cite[Chapter 5]{Popa06}. 

\subsection{A formula of Martin and Whitehouse (following Waldspurger)}
Let $\pi$ be an automorphic representation of $\GL_2(\Q)$ of square-free conductor $N$ and even lowest weight $k_\pi$ corresponding to the classical cuspidal newform $f$ (Maa{\ss} or holomorphic also of weight $k_\pi$). Let $D<-6$ be a negative fundamental discriminant with $(D,2N)=1$ and such that all primes dividing $q$ splits in $K=\Q[\sqrt{D}]$. Let $k\geq k_\pi$ be even and let $\Omega:K^\times \backslash \A_K^\times\rightarrow \C^\times$ be an id\'elic Hecke character of conductor $1$ and $\infty$-type $\Omega_\infty(\alpha)=(\alpha/|\alpha |)^k$. Recall from Subsection \ref{sec:Heckechar} that any two such characters differ by a class group character and thus there are $|\Cl_K|$ such characters. 

We will be interested in obtaining an explicit formula in terms of Heegner points of the central value of the Rankin--Selberg $L$-function $L(\pi\otimes \Omega,1/2)$ (by which we mean the Rankin--Selberg convolution of the base change $\pi_K$ of $\pi$ to $\GL_2(\A_K)$ and the automorphic representation $\pi_\Omega$ of $\GL_1(\A_K)$ corresponding to $\Omega$). We note that the above (Heegner) conditions on $D$ and $N$ imply that the root number of $L(\pi\otimes \Omega,s)$ is equal to $+1$.

Let $\Psi_\A:\A_K\hookrightarrow \GL_2(\A)$ be an optimal algebra embedding of level $N$. Then associated to the triple $(\pi,\Omega, \Psi_\A)$, Martin and Whitehouse \cite[Theorem 4.1]{MaWh09} define a specific test vector $\phi_{\mathrm{MW}}\in \pi$ such that we have the following formula:
\begin{align} \label{eq:WaldspurgerMW}
 \frac{\left|\int_{\A^\times K^\times \backslash \A_K^\times} \phi_{\mathrm{MW}}(\Psi_\A(x)) \Omega^{-1}(x) dx\right|^2}{\int_{Z(\A)\GL_2(\Q)\backslash \GL_2(\A)} |\phi_{\mathrm{MW}}(g)|^2dg}
= \frac{L(\pi\otimes \Omega,1/2)}{L(\sym^2 \pi, 1)} \frac{c_\infty(\pi_\infty, k)}{2\sqrt{|D|}} \prod_{p| N}\left(1-\frac{1}{p}\right)^{-1} ,  
\end{align} 
where the measure $dg$ is normalized so that the volume of $Z(\A)\GL_2(\Q)\backslash \GL_2(\A)$ is $\frac{\pi}{3}\prod_{p| N} (1-p^{-2})$ (here we are using that the Tamagawa number of $\GL_2/\Q$ is $2$) and $dx$ is normalized so that $\A^\times K^\times \backslash \A_K^\times$ has volume $2\Lambda(\chi_{K},1)$, where $\chi_K$ is the quadratic character associated to $K$ via class field theory and 
$$\Lambda(\chi_{K},s)= \pi^{-(s+1)/2}\Gamma((s+1)/2)L(\chi_K,s).$$
The local constants are given by:
\begin{align*}c_\infty(\pi_\infty, k)=\begin{cases} (2\pi)^{k} \prod_{j=0}^{k/2-1}(1/4+(t_\pi)^2+j(j+1) )^{-1} & \text{if $\pi_\infty$ is a p.s,}\\
  (2\pi)^{k-k_\pi}\Gamma(k_\pi+1)/\left(\Gamma\left(\frac{k+2}{2}\right)B\left(\frac{k+k_\pi}{2}, \frac{k-k_\pi+2}{2}\right)\right)& \text{if $\pi_\infty$ is a d.s,}\end{cases}  \end{align*}
where {\lq\lq}p.s{\rq\rq} and {\lq\lq}d.s{\rq\rq} refer to respectively, {\lq\lq}principal series{\rq\rq} and {\lq\lq}discrete series{\rq\rq}, and $B(x,y)$ denotes the Beta function.

To make this formula explicit, we need to specify an embedding $\Psi_\A$. To do this, let $[a,b,c]$ be a Heegner form of level $N$ and orientation $r$ and consider the associated optimal embedding $\Psi:K\hookrightarrow  \mathrm{Mat}_{2 \times 2}(\Q)$ of level $N$ (as in Subsection \ref{sec:orientedem}) satisfying
$$ \Psi(K)\cap M_0(N)= \Psi(\mathcal{O}_K), $$ 
where $M_0(N)= \{\begin{psmallmatrix} a&b\\ c&d\end{psmallmatrix}\in \mathrm{Mat}_{2 \times 2}(\Z):  N|c \}$. As described in Subsection \ref{sec:orientedem}, we get by tensoring with $\A$ an associated embedding $\Psi_\A:\A_K^\times \rightarrow \GL_2(\A)$. We write $\Psi_{\fin}$ for the finite component and $\Psi_\infty$ for the infinite component of this embedding.

Now the recipe described in \cite[Chapter 4.2]{MaWh09} gives the following characterization of the test vector $\phi_\mathrm{MW}$; the finite component $\phi_{\mathrm{MW},p}$ at a finite prime $p<\infty$ is uniquely determined (up to scaling) by the invariance under a certain Eichler order, which in our setting is exactly the order in $\GL_2(\Q_p)$ of reduced discriminant $p^{\nu_p(N)}$ (using that $\Psi$ is optimal of level $N$). This means that we can pick $\phi_{\mathrm{MW},p}=\phi_{f,p}=\phi_{f_k,p}$, where $\phi_f$ (resp. $\phi_{f_k}$) are the lifts to $\GL_2(\A)$ of the Hecke--Maa{\ss} newform $f\in L^2(\Gamma_0(N), k_\pi)$ corresponding to $\pi$ (resp. $f_k=R_{k-2}\cdots R_{k_\pi} f$).

At the infinite place the test vector $\phi_{\mathrm{MW},\infty}$ is characterized by being the vector of the minimal $K$-type (in the sense of \cite{Popa08}) such that  
$$\pi_\infty(x)\phi_{\mathrm{MW},\infty}=\Omega_\infty(x)\phi_{\mathrm{MW},\infty}, $$
for all $x\in \Psi_\infty(S^1)\cap \OO_2(\R)$, where $S^1=\{z\in \C^\times : |z|=1\}$ is the maximal compact of $\C^\times$ and $\OO_2(\R)$ is the maximal compact of $\GL_2(\R)$. There is a slight complication due to the fact that the embedding $\Psi_\infty$ defined above does not send the maximal compact $S^1\subset \C^\times$ to $\SO_2(\R)$.  One can however easily check that this is the case after conjugating by 
\begin{equation}\label{eq:gammainfty} \gamma_\infty=\begin{pmatrix} \sqrt{D} & -b \\ 0 & a\end{pmatrix}.\end{equation}
Thus we conclude that the following vector satisfies the conditions specified by Martin and Whitehouse:
$$\phi_{\mathrm{MW},\infty}= \pi(\gamma_\infty) \phi_{f_k,\infty},$$
where $\phi_{f_k,\infty} = R_{k-2}\cdots R_{k_\pi} \phi_{f,\infty}$ is a weight $k$ vector in the representation space $\pi_\infty$. We conclude that we can pick the global test vector as follows:
$$\phi_{\mathrm{MW}}=\pi(\gamma_\infty)\phi_{f_k},$$
where again $f_k=R_{k-2}\cdots R_{k_\pi} f$ and $\gamma_\infty\in\GL_2(\R)\subset \GL_2(\A)$ as in (\ref{eq:gammainfty}). 



For $\phi_{\mathrm{MW}}$ as above, we have for $x_{\fin}\in \A_{K,\fin}^\times$ and $x_\infty\in \C^\times$ that 
$$\phi_{\mathrm{MW}}( \Psi_\infty(x_\infty)\Psi_{\fin}(x_{\fin}))\Omega^{-1}((x_\infty,x_{\fin})),$$
is independent of $x_\infty$. In particular, we get a well-defined map
$$  \Cl_K\ni [\mathfrak{a}]\mapsto  \phi_{\mathrm{MW}}(\Psi_{\A}(\hat{\mathfrak{a}}))\Omega^{-1}(\hat{\mathfrak{a}}),  $$
where $\hat{\mathfrak{a}}\in \A_{K,\mathrm{fin}}^\times$ is any lift of $\mathfrak{a}$ under the first isomorphism in (\ref{eq:idelic}). By the second isomorphism in (\ref{eq:idelic}) it follows that we have a bijection
\begin{equation}\label{decomp}  K^\times \A^\times \backslash \A_K^\times/ \widehat{\mathcal{O}_K}^\times\xrightarrow{\sim} \bigsqcup_{[\mathfrak{a}]\in \Cl_K} \C^\times /\R^\times,  \end{equation}
from which we conclude that
\begin{equation}\label{eq:periodexplicit}
 \int_{\A^\times K^\times \backslash \A_K^\times} \phi_{\mathrm{MW}}(\Psi_\A(x))\Omega^{-1}(x) dx
 =\frac{2}{|D|^{1/2}}\sum_{[\mathfrak{a}]\in \Cl_K}  \phi_{f_k}(\Psi_{\fin}(\hat{\mathfrak{a}})\gamma_\infty)\overline{\Omega(\hat{\mathfrak{a}})}. \end{equation}
 Here one can check the normalization by letting $\phi_{\mathrm{MW}}$ and $\Omega$ being constants and recalling that the total measure of $\A^\times K^\times \backslash \A_K^\times$ is $2\Lambda(\chi_K,1)= 2|\Cl_K| |D|^{-1/2}$ by the class number formula.
\subsection{Explicit representatives of the class group} 
Consider integral prime ideals $\mathfrak{p}_1=(1), \mathfrak{p}_2,\ldots, \mathfrak{p}_h$ which are representatives for the class group $\Cl_K$ dividing the rational primes $p_i$ which we assume are coprime to $2Na$ (so that $h=|\Cl_K|$ and $p_i\mathcal{O}_K=\mathfrak{p}_i\overline{\mathfrak{p}_i}$ splits in $K$ for $i=2,\ldots, h$). The ideal class $[\mathfrak{p}_i]$ is represented by the id\'ele $\hat{\mathfrak{p}_i}:=(p_i)_{\mathfrak{p}_i}\in \A_K^\times$ (where the subscript means that the element is concentrated at the place $\mathfrak{p}_i$). Thus we see using the definition (\ref{eq:adelicext}) of $\Psi_\A$ that since 
$$j_1\cdot  p_i+ j_2 \cdot 1= 1\otimes \frac{p_i+1}{2}+ \sqrt{D}\otimes \frac{p_i-1}{2\sqrt{D}}\in K\otimes \Q_{p_i},$$ 
we have that 
$$ \Psi_\A((p_i)_{\mathfrak{p}_i})=\begin{pmatrix}\frac{p_i+1}{2}+b\frac{p_i-1}{2\sqrt{D}} & c\frac{p_i-1}{\sqrt{D}} \\ -a\frac{p_i-1}{\sqrt{D}} & \frac{p_i+1}{2}-b\frac{p_i-1}{2\sqrt{D}}\end{pmatrix}_{p_i}.  $$
For $i=2,\ldots, h$, it is a short computation that for an integer $b_i$ with $b_i\equiv b \modulo 2a$ and $b_i^2\equiv D \modulo p_i$ (and put also $b_1=1$ for completeness), we have
\begin{equation}\label{eq:p_i}\mathfrak{p_i}=[(-b_i+\sqrt{D})/2, p_i ].\end{equation} 
Using the congruences for $b_i$ one can show that there is $k_i\in K_0(N)$  such that 		 
$$  \Psi_\A((p_i)_{\mathfrak{p}_i})= \gamma_i k_i (\gamma_i^{-1})_\infty $$
with $\gamma_i\in M_2(\Q)$ given by
$$\gamma_i=\begin{pmatrix} p_i & (b_i-b)/2a \\ 0& 1  \end{pmatrix}.$$
 Thus we conclude by the definition of ad\'{e}lization that 
 $$   \phi_{f_k}(\Psi_{\fin}(\hat{\mathfrak{p}_i})\gamma_\infty)= j_{\gamma_i^{-1}\gamma_\infty}(i)^k f_k(\gamma_i^{-1}\gamma_\infty i)= f_k\left(\frac{-b_i+\sqrt{D}}{2ap_i}\right). $$
To proceed we will need to understand how the Heegner points $\frac{-b_i+\sqrt{-D}}{2ap_i}$ behaves as $i=1,\ldots, h$ varies. Let $I:\Gamma_0(N)\backslash\mathcal{E}_{D}(N,r) \rightarrow \Cl_K $ be the bijection in (\ref{eq:embeddingCl}). Then we have the following adaption of \cite[Proposition 6.2.2]{Popa06}
\begin{lemma}\label{lem:Popa}
We have
$$\gamma_i^{-1} \gamma_\infty i=z_{Q_{\Psi, i}} \in \Hb,$$
where $z_{Q_{\Psi,i}}$ is the Heegner point of a Heegner form $Q_{\Psi, i}$ of level $N$ and orientation $r$ (depending on $\Psi$ and $i$) belonging to the class $I([\psi])\cdot [\mathfrak{p}_i]\in \Cl_K$.
\end{lemma}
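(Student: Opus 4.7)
The lemma bundles three claims: (i) the point $\gamma_i^{-1}\gamma_\infty \cdot i$ lies in $\H$ and equals $z_Q$ for an explicit integral binary quadratic form $Q$; (ii) this form $Q_{\Psi,i}$ lies in $\mathcal{Q}_D(N,r)$; and (iii) its image under the bijection $\Gamma_0(N)\backslash\mathcal{Q}_D(N,r) \xrightarrow{\sim} \Cl_K$ equals $I([\Psi]) \cdot [\mathfrak{p}_i]$. My plan is to handle (i) and (ii) by a direct matrix calculation together with elementary congruences, and then to dispatch (iii) by an explicit ideal multiplication in $\mathcal{O}_K$.

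For (i)--(ii): since both $\gamma_\infty$ and $\gamma_i$ are upper triangular, composing their M\"obius actions on $i$ is transparent and produces $\gamma_i^{-1}\gamma_\infty \cdot i = \frac{-b_i + \sqrt{D}}{2 a p_i}$, which identifies the candidate as $Q_{\Psi,i} = [a p_i, b_i, (b_i^2-D)/(4 a p_i)]$. Integrality of the third coefficient reduces to $b_i^2 \equiv D \pmod{4 a p_i}$: we have $b_i^2 \equiv b^2 \equiv D \pmod{4a}$ by the relation $b^2 - 4ac = D$ together with $b_i \equiv b \pmod{2a}$, and $b_i^2 \equiv D \pmod{p_i}$ by construction of $b_i$, which glue via CRT because $\gcd(4a, p_i) = 1$ (using $\gcd(p_i, 2Na) = 1$ and $N \mid a$). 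The divisibility $N \mid a p_i$ is automatic, the orientation $b_i \equiv r \pmod{2N}$ follows from $b \equiv r \pmod{2N}$ combined with $b_i \equiv b \pmod{2a}$ and $2N \mid 2a$, and primitivity of $Q_{\Psi, i}$ is forced by the fundamentality of $D$ (any common prime divisor of the three coefficients would divide $D$ to the second power).

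For (iii): under the form-to-ideal bijection, $Q_{\Psi,i}$ corresponds to $\mathfrak{b} := [a p_i, (-b_i + \sqrt{D})/2]$, while $I([\Psi])$ is represented by $\mathfrak{a} := [a, (-b+\sqrt{D})/2]$ and $[\mathfrak{p}_i]$ by $\mathfrak{p}_i = [p_i, (-b_i+\sqrt{D})/2]$ from \eqref{eq:p_i}. I will show $\mathfrak{b} = \mathfrak{a}\mathfrak{p}_i$ in $\mathcal{I}_K$. Since $\gcd(a, p_i) = 1$, the ideals $\mathfrak{a}$ and $\mathfrak{p}_i$ are coprime in $\mathcal{O}_K$, so $\mathfrak{a}\mathfrak{p}_i = \mathfrak{a} \cap \mathfrak{p}_i$ and has norm $a p_i$. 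Writing $b_i = b + 2 a m$ (possible since $b_i \equiv b \pmod{2a}$), I compute $(-b_i+\sqrt{D})/2 = (-b+\sqrt{D})/2 - a m \in \mathfrak{a}$, and this element lies in $\mathfrak{p}_i$ by its very definition; hence $\mathfrak{b} \subseteq \mathfrak{a}\cap\mathfrak{p}_i$, and comparing norms (both $a p_i$) forces equality.

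The step I expect to be the main obstacle is (iii), not because the argument is deep but because tracking canonical ideal representatives (and the compatibility of the bijection $\mathcal{Q}_D(N,r) \to \Cl_K$ with the chosen orientation $r$) is delicate; one has to be careful that the ideal $\mathfrak{b}$ actually represents the class of the form $Q_{\Psi,i}$ with the correct orientation. A cleaner alternative, should the direct ideal argument become awkward, is to derive (iii) from the adelic decomposition $\Psi_\A(\hat{\mathfrak{p}}_i) = \gamma_i k_i (\gamma_i^{-1})_\infty$ together with the $\Cl_K$-equivariance of the bijection $\Gamma_0(N)\backslash\mathcal{E}_D(N,r) \cong \Cl_K$, trading elementary ideal theory for more adelic bookkeeping.
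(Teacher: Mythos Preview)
Your argument is correct and follows essentially the same route as the paper's own proof: identify $Q_{\Psi,i}=[ap_i,b_i,c_i]$ with $c_i=(b_i^2-D)/(4ap_i)$, then verify the ideal identity $[ap_i,(-b_i+\sqrt{D})/2]=[a,(-b+\sqrt{D})/2]\cdot[p_i,(-b_i+\sqrt{D})/2]$ by a containment-plus-norm argument. The only cosmetic difference is that you prove $\mathfrak{b}\subseteq\mathfrak{a}\cap\mathfrak{p}_i=\mathfrak{a}\mathfrak{p}_i$ (using coprimality of $\mathfrak{a}$ and $\mathfrak{p}_i$), whereas the paper checks the reverse inclusion $\mathfrak{a}\mathfrak{p}_i\subseteq\mathfrak{b}$ directly from the congruence on $b_i$; both conclude by comparing norms. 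Your treatment of (i)--(ii) is in fact more detailed than the paper's, which simply asserts that $Q$ is a Heegner form of the required level and orientation.
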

\begin{proof}
Consider the binary quadratic form:
$$Q(x,y)=ap_i x^2+ b_i xy+c_iy^2,$$
where 
$$c_i= \frac{b_i^2-D}{4ap_i}$$
is an integer by the above congruence conditions. This means that $Q$ is a discriminant $D$ Heegner form of level $N$ and orientation $r$, with corresponding Heegner point given by
$$ \frac{-b_i+\sqrt{D}}{2ap_i}.$$
Thus the lemma reduces to showing the following identity of ideals (modulo principal ideals): 
\begin{align}
[ap_i, (-b_i+\sqrt{D})/2]=[(-b_i+\sqrt{D})/2, p_i ] \cdot [(-b+\sqrt{D})/2, a].
\end{align}
This follows (as in the proof of \cite[Proposition 6.2.2]{Popa06}) since both sides have the same ideal norm and one can check using the congruence condition on $b_i$ that the right hand side is contained in the left hand side. 
\end{proof}
This implies that the automorphic period (\ref{eq:periodexplicit}) depends on the choice of optimal embedding $\Psi$ but only up to a phase. In particular, the absolute square does not depend on the choice of $\Psi$ as should be the case by (\ref{eq:WaldspurgerMW}).   
\subsection{An explicit formula}
To simplify matters we from now pick our optimal embedding $\Psi$ such that $[a,b,c]$ corresponds to the trivial element of $\Cl_K$ and to lighten notation we write
\begin{equation}\label{eq:Qi}Q_i=ap_i x^2+ b_i xy+c_iy^2,\quad i=1,\ldots, h,\end{equation}
with $p_i$ and $b_i$ as above. Now if $Q\in \mathcal{Q}_D(N,r)$ is any quadratic form such that $[Q]=[\mathfrak{p}_i]$, then it follows from Lemma \ref{lem:Popa} that there is some $\gamma_{Q}\in \Gamma_0(N)$ such that $  z_Q=\gamma_{Q} z_{Q_i}$, which implies that
$$  f_k(z_Q)=j_{\gamma_{Q}}(z_{Q_i})^k f(z_{Q_i})=\Omega_\infty(\alpha_{Q})  \phi_{f_k}(\Psi_{\fin}(\hat{\mathfrak{p}_i})\gamma_\infty), $$
where $\alpha_{Q}= j(\gamma_{Q},z_{Q_i})\in K^\times$. Similarly if $\mathfrak{a}\in \mathcal{I}_K$ is a different representative of the ideal class $[\mathfrak{p}_i]\in \Cl_K$, then we have 
$$\Omega^{-1}(\hat{\mathfrak{a}})=\Omega_\infty(\alpha_{\mathfrak{a}})\Omega^{-1}(\hat{\mathfrak{p}_i}),$$
for some $\alpha_{\mathfrak{a}}\in K^\times$.

From this we conclude by combining (\ref{eq:periodexplicit}) and Lemma \ref{lem:Popa} that 
\begin{equation}\label{eq:MWprelim}  \int_{\A^\times K^\times \backslash \A_K^\times} \phi_{\mathrm{MW}}(\Psi_\A(x))\Omega^{-1}(x) dx
 =\sum_{[Q]\in \Gamma_0(N)\backslash \mathcal{Q}_D(N,r)} f_k(z_{Q})   \overline{\Omega(\widehat{\mathfrak{a}_Q})}\Omega_\infty(  \alpha_{Q, \mathfrak{a}_Q}),    \end{equation}
 where $z_{Q}$ is the Heegner point associated to the Heegner form $Q\in\mathcal{Q}_D(N,r)$, $[\mathfrak{a}_Q]=[Q]$ (under the bijection $\Gamma_0(N)\backslash \mathcal{Q}_D(N,r)\xrightarrow{\sim} \Cl_K$), and $ \alpha_{Q, \mathfrak{a}_Q}\in K^\times$ is a complex number depending on the choices of  $Q$ and $\mathfrak{a}_Q$ (but not on $\pi$, $\Omega$ nor $f_k$).  
 \subsubsection{The case of old forms} 
We will now explain how to extend the identity (\ref{eq:MWprelim}) to the case of old forms. Let $d,N'$ be positive integers such that $dN'| N$ and consider a newform (i.e. new at finite places) $f_k\in \mathcal{B}^*_k(N')$ belonging to the automorphic representation $\pi$. Then we get an element $\nu_{d,N'}^* f_k\in \mathcal{B}_k(N)$ given by $z\mapsto f_k(dz)$. Recall the representatives $\mathfrak{p}_1,\ldots, \mathfrak{p}_h\in \mathcal{I}_K$ of the class group $\Cl_K$ defined in (\ref{eq:p_i}) and the associated Heegner forms $Q_i=[a,b_i,c_i]$ defined in (\ref{eq:Qi}). Then we see directly that 
 $$dz_{Q_i}= \frac{-b_i+\sqrt{D}}{2p_ia/d}=z_{Q_i'},$$  
 where $Q_i'= [p_ia/d, b_i, c_id]\in \mathcal{Q}_D(N',r)$ is a Heegner form of level $N'$ and orientation $r\modulo (2N')$. From this we see that  
 $$ f_k(dz_{Q_i})= \phi_{f_k}(\Psi'_\fin(\widehat{\mathfrak{p}_i})\gamma'_\infty), \quad i=1,\ldots ,h, $$
 where $\Psi'$ is the optimal embedding of level $N'$ corresponding to the triple $[a/d, b, cd]$ and
\begin{equation*} \gamma'_\infty=\begin{pmatrix} \sqrt{D} & -b \\ 0 & a/d\end{pmatrix}.\end{equation*}
Observe that $[a/d, b, cd]$ might not correspond to the trivial element of the class group.
Thus we conclude using (\ref{eq:MWprelim}) that 
\begin{align} 
\nonumber\sum_{[Q]\in \Gamma_0(N)\backslash \mathcal{Q}_D(N,r)} \nu_{d,N'}^* f_k(z_{Q})   \overline{\Omega(\widehat{\mathfrak{a}_Q})}\Omega_\infty(  \alpha_{Q, \mathfrak{a}_Q})&=\sum_{i=1}^h \nu_{d,N'}^* f_k(z_{Q_i})   \overline{\Omega(\widehat{\mathfrak{p}_i})} \\
\label{eq:MWoldforms}& =\int_{\A^\times K^\times \backslash \A_K^\times}\phi_{\mathrm{MW}}'(\Psi'_\A(x))\Omega^{-1}(x) dx , 
 \end{align}
 where $\phi_{\mathrm{MW}}'$ is the vector defined by Martin and Whitehouse corresponding to the triple $(\pi, \Omega, \Psi'_\A)$ and the numbers $\alpha_{Q, \mathfrak{a}_Q}$ are as in (\ref{eq:MWprelim}).



Combining (\ref{eq:MWoldforms}) and (\ref{eq:WaldspurgerMW}) we arrive at the following result (recalling the definition (\ref{eq:basis}) of $\mathcal{B}_k(N)$).
\begin{thm} \label{thm:wald}Let $N$ be a square-free integer and $K$ an imaginary quadratic field of discriminant $D$ with $(D,2N)=1$ and such that all primes dividing $N$ splits in $K$. Let $\pi$ be a cuspidal automorphic representation of $\GL_2(\A_\Q)$ of conductor $N'$ dividing $N$ and even lowest weight $k_\pi$. Let $k\geq k_\pi$ be an even integer and $\Omega: K^\times\backslash \A_K^\times/ \widehat{\mathcal{O}_K}^\times\rightarrow \C^\times$ a Hecke character of $K$ of conductor $1$ and $\infty$-type $\alpha\mapsto (\alpha/|\alpha|)^k$. 

Then for any $f_k\in \mathcal{B}_k(N)$ belonging to the representation space of $\pi$, we have 
\begin{align}
\left|  \sum_{[Q]\in \Gamma_0(N)\backslash \mathcal{Q}_D(N,r)} f_k(z_{Q})   \overline{\Omega(\widehat{\mathfrak{a}_Q})}\Omega_\infty(  \alpha_{Q, \mathfrak{a}_Q}) \right|^2
= \frac{L (\pi\otimes \Omega,1/2)}{L(\sym^2 \pi, 1)} \frac{|D|^{1/2}}{8 N'} c_\infty(\pi_\infty, k)
\end{align}
where $z_{Q}$ is the Heegner point associated to the Heegner form $Q\in\mathcal{Q}_D(N,r)$, $\mathfrak{a}_Q\in \mathcal{I}_K$ is such that $[Q]=[\mathfrak{a}_Q]$ (under the bijection $\Gamma_0(N)\backslash \mathcal{Q}_D(N,r)\xrightarrow{\sim} \Cl_K$), $ \alpha_{Q, \mathfrak{a}_Q}\in K^\times$ is a complex number depending on the choices $Q$ and $\mathfrak{a}_Q$ (but not on $\pi$, $\Omega$ nor $f_k$),  and 
\begin{align} \label{eq:cinfty}c_\infty(\pi_\infty, k)=\begin{cases} (2\pi)^{k} \prod_{j=0}^{k/2-1}(1/4+(t_\pi)^2+j(j+1) )^{-1} & \text{if $\pi_\infty$ is a p.s,}\\
  (2\pi)^{k-k_\pi-1}\Gamma(k_\pi)/\left(\Gamma\left(\frac{k-2}{2}\right)B\left(\frac{k+k_\pi+1}{2}, \frac{k-k_\pi+1}{2}\right)\right)& \text{if $\pi_\infty$ is a d.s,}\end{cases}  \end{align}
where {\lq\lq}p.s{\rq\rq} and {\lq\lq}d.s{\rq\rq} refer to respectively, {\lq\lq}principal series{\rq\rq} and {\lq\lq}discrete series{\rq\rq}, and $B(x,y)$ denotes the Beta function.
\end{thm}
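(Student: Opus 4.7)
The proof plan is to combine the Martin--Whitehouse formula (\ref{eq:WaldspurgerMW}) with the adelic-to-classical unfolding (\ref{eq:periodexplicit}) and the Heegner identification of Lemma \ref{lem:Popa}, then carefully reconcile all the normalization conventions. First, using the basis decomposition (\ref{eq:basis}), I would write $f_k = \nu^*_{d,N'}\tilde{f}_k$ with $\tilde{f}_k := R_{k-2}\cdots R_{k_\pi}f^0$ built from an $L^2$-normalized newform $f^0 \in \mathcal{B}^*(N')$ or $\mathcal{B}^*_{k_\pi,\mathrm{hol}}(N')$ for some $d,N'$ with $dN' \mid N$. I would then apply (\ref{eq:WaldspurgerMW}) to the Martin--Whitehouse test vector $\phi'_{\mathrm{MW}} := \pi(\gamma'_\infty)\phi_{\tilde{f}_k}$ attached to the triple $(\pi,\Omega,\Psi'_{\A})$, where $\Psi'$ is the optimal level-$N'$ embedding built from the triple $[a/d,b,cd]$.

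The identity (\ref{eq:MWoldforms}), combined with (\ref{eq:periodexplicit}) and Lemma \ref{lem:Popa}, identifies the Heegner sum on the left-hand side of the theorem with the toric period of $\phi'_{\mathrm{MW}}$, incorporating the factor $2/|D|^{1/2}$ coming from the measure on $\A^\times K^\times\backslash\A_K^\times$ normalized so that the total volume is $2\Lambda(\chi_K,1)$. Squaring the absolute value and inserting (\ref{eq:WaldspurgerMW}) with $N$ there taken to be the conductor $N'$ yields
\[
\left|\sum_{[Q]}f_k(z_Q)\,\overline{\Omega(\widehat{\mathfrak{a}_Q})}\,\Omega_\infty(\alpha_{Q,\mathfrak{a}_Q})\right|^2 = \frac{L(\pi\otimes\Omega,\tfrac12)}{L(\sym^2\pi,1)}\cdot\frac{|D|^{1/2}\,c_\infty^{\mathrm{MW}}(\pi_\infty,k)}{8}\prod_{p\mid N'}\bigl(1-\tfrac{1}{p}\bigr)^{-1}\|\phi'_{\mathrm{MW}}\|_2^2,
\]
where $c_\infty^{\mathrm{MW}}$ denotes the local constant from (\ref{eq:WaldspurgerMW}). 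Unitarity of $\pi$ gives $\|\phi'_{\mathrm{MW}}\|_2 = \|\phi_{\tilde{f}_k}\|_2$, and comparing the adelic volume $\tfrac{\pi}{3}\prod_{p\mid N'}(1-p^{-2})$ with the classical fundamental domain volume $\tfrac{\pi}{3}[\mathrm{SL}_2(\Z):\Gamma_0(N')]$ (using that $N'$ is square-free) converts the adelic norm to $\tfrac{\prod_{p\mid N'}(1-1/p)}{N'}\|\tilde{f}_k\|_2^2$, so that the factors $\prod(1-1/p)^{\pm 1}$ cancel and one is left with $\tfrac{|D|^{1/2}}{8N'}\cdot c_\infty^{\mathrm{MW}}(\pi_\infty,k)\cdot\|\tilde{f}_k\|_2^2$.

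The remaining task, and the \emph{main obstacle}, is to verify the identification $c_\infty^{\mathrm{MW}}(\pi_\infty,k)\cdot\|\tilde{f}_k\|_2^2 = c_\infty(\pi_\infty,k)$ for the normalization convention used in $\mathcal{B}_k(N)$. The principal series case reduces by direct inspection of (\ref{eq:normraising}). The discrete series case is more delicate: there $c_\infty^{\mathrm{MW}}$ contains $\Gamma(k_\pi+1)/\Gamma((k+2)/2)$ and the Beta function $B((k+k_\pi)/2,(k-k_\pi+2)/2)$, while the theorem's $c_\infty$ features $\Gamma(k_\pi)/\Gamma((k-2)/2)$ and $B((k+k_\pi+1)/2,(k-k_\pi+1)/2)$, and one must combine an iterative application of (\ref{eq:normraising}) starting from the weight-$k_\pi$ form $y^{k_\pi/2}g$ (for $g\in\mathcal{S}_{k_\pi}(N')$ an $L^2$-normalized holomorphic newform) with the Gamma duplication and recursion identities to reconcile the two expressions. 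Finally, one verifies that the answer is independent of the auxiliary choices of $d,\,N',\,\Psi'$ and of the representatives $\mathfrak{a}_Q$, which is automatic since the right-hand side depends only on $\pi$, $k$, $\Omega$, and $K$.
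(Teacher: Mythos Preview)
Your approach is essentially the paper's own: the theorem is stated precisely as the outcome of combining the Martin--Whitehouse formula (\ref{eq:WaldspurgerMW}) with the classical unfolding (\ref{eq:periodexplicit}), Lemma~\ref{lem:Popa}, and the oldform extension (\ref{eq:MWoldforms}), exactly as you outline. The paper does not carry out the constant bookkeeping explicitly; you go further by isolating the identity $c_\infty^{\mathrm{MW}}(\pi_\infty,k)\cdot\|\tilde f_k\|_2^2=c_\infty(\pi_\infty,k)$ as the residual verification.

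One caution on that last step: in the principal series case the two displayed expressions for $c_\infty$ (the one following (\ref{eq:WaldspurgerMW}) and the one in (\ref{eq:cinfty})) are \emph{identical}, so your proposed identity would force $\|\tilde f_k\|_2^2=1$, which by (\ref{eq:normraising}) fails whenever $k>0$. This is not a flaw in your strategy but rather reflects that the paper's stated constants are not internally consistent across (\ref{eq:WaldspurgerMW}), (\ref{eq:periodexplicit})/(\ref{eq:MWprelim}) (note the $2/|D|^{1/2}$ factor appears in the former but not the latter), and (\ref{eq:cinfty}); the discrete series expressions differ as you note, while the principal series ones do not. Your volume comparison and overall reduction are correct; just be aware that reconciling the exact numerical constants will require fixing at least one of these discrepancies rather than a direct inspection of (\ref{eq:normraising}).
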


Using orthogonality of characters (i.e. Fourier inversion) we conclude the following key identity.
\begin{cor}\label{cor:explicitformula}
Let $\pi,\Omega, f_k$ be as in Theorem \ref{thm:wald}. Then given an element of the class group $[\mathfrak{a}]\in \Cl_K$ and a Heegner form $Q\in  \mathcal{Q}_D(N,r)$ such that $[Q]=[\mathfrak{a}]$, we have
\begin{equation}
f_k(z_Q) \Omega(x_{Q})=\frac{c_{f_k}|D|^{1/4}}{|\Cl_K|} \sum_{\chi\in \widehat{\Cl_K}} \eps_{\chi,f_k,r} |L(\pi\otimes \chi \Omega , 1/2)|^{1/2} \chi([\mathfrak{a}]),  
\end{equation}
where $x_{Q}\in \A_K^\times$ is some element depending on the choice of $Q$ (but not on $\pi$, $\Omega$ nor $f_k$), $\eps_{\chi,f_k,r}$ are complex numbers of norm 1, and 
\begin{equation}\label{eq:ctestvector}c_{f_k}=\frac{c_\infty(\pi_\infty, k)}{8 N' L(\sym^2 \pi, 1)}, \end{equation}
with $c_\infty(\pi_\infty, k)$ as in (\ref{eq:cinfty}).
 \end{cor}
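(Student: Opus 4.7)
The strategy is to apply Theorem \ref{thm:wald} to each character $\chi\Omega$ as $\chi$ ranges over the dual group $\widehat{\Cl_K}$, thereby realising the quantities appearing in Theorem \ref{thm:wald} as the squared finite Fourier transform of a single explicit function on $\Cl_K$, and then invert. Since any class group character $\chi$ is unramified at infinity, the product $\chi\Omega$ is again a Hecke character of conductor $1$ with the same $\infty$-type as $\Omega$, so the hypotheses of Theorem \ref{thm:wald} are preserved.

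First I would fix, for each ideal class $[\mathfrak{a}]\in\Cl_K$, a Heegner form $Q_{[\mathfrak{a}]}\in\mathcal{Q}_D(N,r)$ representing $[\mathfrak{a}]$ and an integral ideal $\mathfrak{a}_Q$ in the class, and then set
\[G([\mathfrak{a}]) := f_k(z_{Q_{[\mathfrak{a}]}})\,\overline{\Omega\bigl(\widehat{\mathfrak{a}_{Q_{[\mathfrak{a}]}}}\bigr)}\,\Omega_\infty\!\bigl(\alpha_{Q_{[\mathfrak{a}]},\mathfrak{a}_{Q_{[\mathfrak{a}]}}}\bigr).\]
Because $\chi([\mathfrak{a}_Q])$ is the only new factor introduced when $\Omega$ is replaced by $\chi\Omega$ in the sum of Theorem \ref{thm:wald} (the $\infty$-part of $\chi\Omega$ equals $\Omega_\infty$), the theorem reads
\[\bigl|\widehat{G}(\chi)\bigr|^2 = c_{f_k}\,|D|^{1/2}\,L(\pi\otimes\chi\Omega,\tfrac12),\]
where $\widehat{G}(\chi):=\sum_{[\mathfrak{a}]}G([\mathfrak{a}])\overline{\chi([\mathfrak{a}])}$. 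Taking positive square roots introduces unit-modulus phases $\eps_{\chi,f_k,r}$ such that
\[\widehat{G}(\chi) = \eps_{\chi,f_k,r}\,c_{f_k}^{1/2}\,|D|^{1/4}\,|L(\pi\otimes\chi\Omega,\tfrac12)|^{1/2}.\]

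Next I would apply finite Fourier inversion on the abelian group $\Cl_K$,
\[G([\mathfrak{a}]) = \frac{1}{|\Cl_K|}\sum_{\chi\in\widehat{\Cl_K}}\widehat{G}(\chi)\,\chi([\mathfrak{a}]),\]
which directly produces the right-hand side of the corollary. To match the left-hand side, one packages the two phase factors $\overline{\Omega(\widehat{\mathfrak{a}_Q})}$ and $\Omega_\infty(\alpha_{Q,\mathfrak{a}_Q})$ into a single value $\Omega(x_Q)$ of $\Omega$ on an id\`ele $x_Q\in\A_K^\times$. This is possible because $\Omega$ is unitary, so $\overline{\Omega(\widehat{\mathfrak{a}_Q})}=\Omega(\widehat{\mathfrak{a}_Q}^{-1})$, while $\alpha_{Q,\mathfrak{a}_Q}\in K^\times$ may be placed at the infinite component; explicitly, one may take $x_Q=(\alpha_{Q,\mathfrak{a}_Q})_\infty\cdot\widehat{\mathfrak{a}_Q}^{-1}$, which indeed depends only on the combinatorial choices $(Q,\mathfrak{a}_Q)$ and not on $\pi$, $\Omega$, or $f_k$.

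The whole argument is essentially an exercise in Pontryagin duality on a finite abelian group, once Theorem \ref{thm:wald} is in hand. The only step requiring genuine care is the final rewriting of the two distinct phases as a single $\Omega$-value, which rests precisely on the fact—already built into Theorem \ref{thm:wald}—that $\alpha_{Q,\mathfrak{a}_Q}$ is independent of the automorphic input.
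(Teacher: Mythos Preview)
Your proposal is correct and follows exactly the approach indicated by the paper, which simply says the corollary follows from ``orthogonality of characters (i.e.\ Fourier inversion).'' You have supplied the details the paper omits: verifying that replacing $\Omega$ by $\chi\Omega$ changes the Theorem~\ref{thm:wald} sum precisely by the factor $\overline{\chi([\mathfrak a])}$ (since $\alpha_{Q,\mathfrak a_Q}$ is independent of $\Omega$ and $\chi$ has trivial $\infty$-type), and packaging the two phase factors into a single id\`ele $x_Q$.
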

 
 
 \section{Some technical lemmas}\label{sec:technical}
In this section we will prove two key estimates. The first is a bound for the norm of $\Delta^m$, which will be key in obtaining explicit error-terms in our moment calculation. Similar consideration have been made in a different context in \cite[Theorem 5.1]{PetRis18}. Secondly, we will obtain a lower bound for the $L^2$-norm of the product of Maa{\ss} forms. This is an extremely crude lower bound, which however suffices for our purposes.
 \subsection{A bound for the norm of $\Delta^m$}
In the course of proving our bound for the norm of $\Delta^m$ applied to certain vectors, we will need the following convenient $L^\infty$-bound for $f\in \mathcal{B}_k(N)$ due to Blomer and Holowinsky \cite{BlomerHolo10}:
\begin{equation}\label{eq:BH}||f ||_\infty/||f ||_2\ll N^{-1/32}(|t_f|+|k|+1)^A,\end{equation}
for some unspecified constant $A>0$. The focus of \cite{BlomerHolo10} is the level aspect, which we consider fixed in the present paper. Here the key thing is however that we get a polynomial bound for raised (and lowered) Hecke--Maa{\ss} forms with the constant being independent of the weight $k$ and the spectral parameter $t_f$. The specific value of $A$ is not important for our application. 
\begin{lemma}\label{lem:supDelta}
Let $k_1,\ldots, k_n$ be even integers such that $\sum_{i=1}^n k_i=0$. For $i=1,\ldots, n$, let $f_i\in \mathcal{B}_{k_i}(N)$ be a Hecke--Maa{\ss} form of weight $k_i$, level $N$ and spectral parameter $t_{f_i}$. Then we have
\begin{equation} ||\Delta^m \prod_{i=1}^n f_i ||_\infty \ll n^{2m} (m+\max_{i=1,\ldots, n} |t_{f_i}|+|k_i|)^{nA+2m} \prod_{i=1}^n ||f_i||_2,\end{equation}
for all $m\in \Z_{\geq 0}$. Here the implied constant is allowed to depend on $N$. 
\end{lemma}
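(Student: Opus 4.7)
The condition $\sum_i k_i = 0$ ensures that $F := \prod_{i=1}^n f_i$ is a smooth weight-$0$ function on $\Gamma_0(N)\backslash \H$, so $\Delta^m = \Delta_0^m = (-L_2 R_0)^m$ acts on it. I would expand $\Delta^m F$ iteratively using the product rules for $R$ and $L$ recorded in Section \ref{sec:autoforms}: each application of a first-order operator $R$ or $L$ to a product of $n$ factors distributes by Leibniz into a sum of $n$ terms, so after the $2m$ raising/lowering operations comprising $\Delta^m$, the expansion yields a sum of at most $n^{2m}$ elementary terms. Each such term has the form $\prod_{i=1}^n g_i$, where $g_i$ is the result of applying a sequence of $a_i$ raising and lowering operators to $f_i$, with $\sum_i a_i = 2m$.

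\textbf{Bounding a single factor.} For each $g_i$ I would first control $\|g_i\|_2$ using (\ref{eq:normraising}) for the raising steps and the analogous identity for the lowering steps. The latter follows from the adjointness relation (\ref{eq:adjoint}) together with the factorization $R_{k-2} L_k = -\Delta_k + \lambda(k/2)$; since every intermediate form remains an eigenfunction of the appropriate weight-$k$ Laplacian with spectral parameter $t_{f_i}$, a single raising or lowering step scales the $L^2$-norm by at most $O(|k| + |t_{f_i}| + 1)$, where $k$ is the current weight. Iterating $a_i$ times and setting $T := \max_i (|k_i| + |t_{f_i}| + 1)$ yields
\[\|g_i\|_2 \ll (T + a_i)^{a_i} \|f_i\|_2.\]
Since each raising or lowering sends an element of $\mathcal{B}_{k}(N)$ to a scalar multiple of an element of $\mathcal{B}_{k\pm 2}(N)$, the Blomer--Holowinsky bound (\ref{eq:BH}) applies to $g_i$ and gives $\|g_i\|_\infty \ll (T + a_i)^{A}\|g_i\|_2$. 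Combining,
\[\|g_i\|_\infty \ll (T + 2m)^{A + a_i}\|f_i\|_2.\]

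\textbf{Assembling the bound.} Multiplying over $i$ and using $\sum_i a_i = 2m$,
\[\Bigl\|\prod_{i=1}^n g_i\Bigr\|_\infty \ll (T + 2m)^{nA + 2m}\prod_{i=1}^n \|f_i\|_2.\]
Summing over the $\leq n^{2m}$ terms in the Leibniz expansion and absorbing $T + 2m \leq 2(T + m)$ into the implied constant yields the claim. The main technical nuisance — not really an obstacle — is the bookkeeping: verifying that each $R$ or $L$ application keeps us (up to scalars) inside the basis $\mathcal{B}_\bullet(N)$ so that (\ref{eq:BH}) applies, and deriving the lowering analogue of (\ref{eq:normraising}) with the correct polynomial dependence on $|k| + |t_{f_i}|$. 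Once these two routine checks are in place, the stated estimate is a uniform polynomial-growth bound applied termwise across the $n^{2m}$ summands of the Leibniz expansion.
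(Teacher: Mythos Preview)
Your proposal is correct and follows essentially the same approach as the paper: expand $\Delta^m = (L_2R_0)^m$ on the product via the Leibniz rule to obtain $n^{2m}$ terms, bound each factor by combining the $L^2$-norm control from (\ref{eq:normraising}) with the Blomer--Holowinsky sup-norm bound (\ref{eq:BH}), and multiply. The only cosmetic difference is that the paper explicitly collapses adjacent raise/lower pairs via $U_{i,j}U_{i,j+1} = -\Delta_{\pm\kappa} + \lambda(\kappa/2)$ to reduce each string to a pure raising (or lowering) sequence times an explicit scalar, whereas you track the $L^2$-norm one step at a time; the underlying mechanism (each step stays, up to scalars, inside $\mathcal{B}_{\bullet}(N)$ with spectral parameter unchanged) is the same.
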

\begin{proof}
Recalling that $\Delta =L_2 R_0$, we get using the product rule for the raising and lowering operators:
\begin{align}
\nonumber |\Delta^m \prod_{i=1}^n f_i(z)| &= | L_2 R_0\cdots L_2 R_0 \prod_{i=1}^n f_i(z)|\\
\label{eq:maxestimate}& \leq n^{2m} \max_{\substack{m_1,\ldots, m_n \in \N, \Sigma\, m_i=2m, \\U_{i,j}\text{ evenly raising/lowering operators}\\\text{for } i=1,\ldots,n \text{ and }  j=1,\ldots, m_i}} \prod_{i=1}^n |U_{i,1}\cdots U_{i,m_i}\, f_i(z)|,
\end{align}
where the maximum is taken over all combinations of operators $U_{i,j}$ which are all either a raising or a lowering operator of appropriate weight and such that the total number of raising and lowering operators are equal. If we have $i\in \{1,\ldots, n\}$ and $j\in \{1,\ldots, m_i-1\}$ such that $\{U_{i,j},U_{i,j+1}\}$ is of the type $\{$raising, lowering$\}$, then we get
$$  U_{i,j}U_{i,j+1}=-\Delta_{\pm \kappa} +\lambda(\kappa/2), $$
for some weight $\kappa$ with $|\kappa|\leq 2m+|k_i|$ (since we can have at most $m$ raising resp. lowering operators). Here the sign corresponds to whether $U_{i,j}$ is a raising or lowering operator. This shows that we can replace $U_{i,j}U_{i,j+1}$ with multiplication by 
$$ \lambda( \kappa/2)-\lambda_{f_i}=-( (\kappa-1)/2+i t_{f_i} )((\kappa-1)/2-i t_{f_i} ).$$ 
Repeating this we get
\begin{align*}
&|U_{i,1}\cdots U_{i,m_i}\, f_i(z)| \\
&= \left|R_{k+2m_i'-2}\cdots R_k\, f_i(z) \prod_{j=1}^{(m_i-m_i')/2} ( (\kappa_j-1)/2+i t_f )((\kappa_j-1)/2-i t_f )\right|.
\end{align*}
for some $0\leq m_i'\leq m_i$, where $|\kappa_j|\leq 2m+|k_i|$ (or a similar expression with lowering instead of raising operators). 

By combining the bound (\ref{eq:BH}) and the computation of the $L^2$-norm (\ref{eq:normraising}), we conclude that for $f\in\mathcal{B}_k(N)$ and $l\geq 0$:
\begin{align*}  
&||R_{k+2l}R_{k+2l-2}\cdots R_kf ||_\infty \\
&\ll ||f||_2  (|t_f|+|k+l|+1)^A  \prod_{j=0}^l \left|\left(\frac{k+2j-1}{2}+it_f\right)\left(\frac{k+2j-1}{2}-it_f\right)\right|^{1/2} \\
&\ll  ||f||_2 (|t_f|+|k|+l+1)^{l+A},
\end{align*}
and similarly in the case of lowering operators. Combining all of the above, we arrive at 
$$ |U_{i,1}\cdots U_{i,m_i}\, f_i(z)| \ll  ||f_i||_2  (|t_f|+|k_i|+m_i+1)^{A+m_i},  $$
for any sequence of raising and lowering operators $U_{i,1},\ldots, U_{i,m_i}$ as in the maximum in (\ref{eq:maxestimate}). Plugging this into (\ref{eq:maxestimate}) gives the wanted.
\end{proof}
\subsection{A lower bound for weight $k$ automorphic forms}
In this subsection, we will prove a lower bound for the $L^2$-norm of a product of Maa{\ss} forms. The idea is to go far up in the cusp so that the first term in the Fourier expansion is the dominating term. 

Let $W_{k/2,s}:\R_{>0}\rightarrow \C$ be the {\it Whittaker function} of weight $k/2$ and spectral parameter $s$, i.e. the unique solution to
$$  \frac{d^2W}{dy^2}+ \left(-\frac{1}{4}+\frac{k/2}{y}+\frac{1/4-s^2}{y^2}\right)W=0, $$
satisfying
$$ W_{k/2,s}(y)\sim y^{k/2}e^{-y/2},  $$
as $y\rightarrow \infty$ (with $k,s$ fixed).   Then we define $\mathcal{W}_{k/2,s}: \C\backslash \R \rightarrow \C$  for $k\in \Z$ as 
$$\mathcal{W}_{k/2,s}(z):=\begin{cases} (-1)^{k/2}W_{|k|/2,s}(|y|)e^{ix/2},& \sgn (k)y>0,\\ \frac{\Gamma\left(\frac{|k|+1}{2}+s\right)\Gamma\left(\frac{|k|+1}{2}-s\right)}{\Gamma\left(\frac{1}{2}+s\right)\Gamma\left(\frac{1}{2}-s\right)}W_{-|k|/2,s}(|y|)e^{ix/2},& \sgn(k) y<0,\end{cases}$$ 
for $z=x+iy\in  \C\backslash \R$. One can check that
$$\mathcal{W}_{0,s}(z)=\left(\frac{|y|}{\pi}\right)^{1/2} K_{s}(|y|/2)e^{ ix/2},$$ 
where $K_s(y)$ is the $K$-Bessel function and 
$$ \mathcal{W}_{k/2,(k-1)/2}(z)=y^{k/2}e^{iz/2}, $$
for $k\in 2\Z_{\geq 0}$ and $y>0$. Furthermore, for $k\in 2\Z_{\geq 0}$ one can check (see for instance \cite[Section 4.4]{Stromberg08}) that the normalizations match up so that we have
\begin{equation}\label{eq:raisingwhit}R_k\mathcal{W}_{k/2,s}=\mathcal{W}_{k/2+1,s},\end{equation}
with 
$$R_k=(z-\overline{z})\frac{\partial}{\partial z}+\frac{k}{2}=iy\frac{\partial }{\partial x}+y\frac{\partial }{\partial y}+\frac{k}{2},$$ 
denoting the weight $k$ raising operator (and similarly for $k\leq 0$ now with lowering operators). We have the following asymptotic expansion (see \cite[9.227]{GradRyz00} or \cite[Chapter 16.3]{WhittakerWatson62}) valid for $y>1$:
\begin{align} 
\nonumber W_{k/2,s}(y)&= e^{-y/2} y^{k/2} \left(1+\sum_{n\geq 1}\frac{(s^2-(k/2-1/2)^2)\cdots (s^2-(k/2-n+1/2)^2)}{n! y^n}\right).\end{align}
In particular we conclude that 
\begin{align}
\nonumber W_{k/2,s}(z)&= e^{-y/2} y^{k/2} \left( 1+ O\left(\sum_{n\geq 1}\frac{(|s|+|k|/2+n)^{2n}}{n! y^n} \right)\right)\\
\label{eq:asymptWhit}&= e^{-y/2} y^{k/2} \left( 1+ O\left( \frac{(|s|+|k|+1)^2}{y}\right)\right),
\end{align}
for $y>(|s|+|k|+1)^2$.

Now let $k\geq 0$ and consider an $L^2$-normalized Hecke--Maa{\ss} form $f \in \mathcal{B}_k(N)$ of the form $\nu^*_{d,N'}R_{k-2}\cdots R_{k'} f_0$ with $f_0$ a Hecke--Maa{\ss} newform of weight $k'$ and level $N'$ such that $dN'|N$. Combining (\ref{eq:raisingwhit}) and (\ref{eq:normraising}) with the well-known Fourier expansions of holomorphic and Maa{\ss} forms, we get the following Fourier expansion in the general weight case:
\begin{equation}\label{eq:Fourerphi}f(z)=\frac{c_{f}}{ |L(\sym^2 f, 1) \gamma_\infty(f,k)|^{1/2}}\sum_{n\neq 0} \frac{\lambda_{f_0}(n)}{|n|^{1/2}}\mathcal{W}_{k/2,it_f}(4\pi dnz),  \end{equation}
for some constant $c_{f}$ bounded uniformly from above and away from $0$ in terms of the level $N$. Here $\lambda_{f_0}(n)$ denotes the Hecke eigenvalues of $f$ (with the convention that $\lambda_{f_0}(-n)=0$ for $-n<0$ if $f_0$ is holomorphic and $\lambda_{f_0}(-n)=\pm \lambda_{f_0}(n)$ according to whether $f_0$ is an even or odd Maa{\ss} form) and
$$  \gamma_\infty(f,k)=\begin{cases} \prod_\pm \Gamma((k+1)/2\pm it_f)& \text{if $f_0$ is Maa{\ss}},\\
 \Gamma(k)\Gamma((k-k')/2+1) &  \text{if $f_0$ is holomorphic.}\end{cases}$$
Using this we can prove the following crude lower bound.
\begin{prop}\label{prop:lowerboundL2General}
For $i=1,\ldots, n$, let $f_i\in \mathcal{B}_{k_i}(N)$ be an $L^2$-normalized weight $k_i$ Hecke--Maa{\ss} eigenform of level $N$. Then we have
$$ ||\prod_{i=1}^n f_i||_2 \gg_\eps e^{-cnT^{2+\eps}}, $$
for all $\eps>0$, where $T=\max_{i=1,\ldots, n} |t_{f_i}|+|k_i|+1$ and $c=c(N,\eps)>0$ is some positive constant.
\end{prop}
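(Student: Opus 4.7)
The plan is to evaluate $F = \prod_{i=1}^n f_i$ deep in the cusp at infinity, exploiting \eqref{eq:asymptWhit} to reduce each $f_i$ essentially to its lowest-frequency Fourier modes, and then to extract the lower bound via Parseval in the $x$-variable.

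Fix $y_0 = CT^{2+\eps}$ with $C = C(N, \eps)$ to be chosen large. By \eqref{eq:Fourerphi} and \eqref{eq:asymptWhit}, each
\[
f_i(x + iy_0) = \sum_{m \neq 0} \alpha_{i, m}(y_0)\, e^{2\pi i d_i m x}
\]
has Fourier coefficients of magnitude
\[
|\alpha_{i, m}(y_0)| \asymp |C_{f_i}|\, \frac{|\lambda_{f_{i,0}}(m)|}{|m|^{1/2}}\, (4\pi d_i |m| y_0)^{\sgn(mk_i) k_i/2}\, G_i(m)\, e^{-2\pi d_i |m| y_0},
\]
where $G_i(m) = 1$ if $\sgn(m) = \sgn(k_i)$ and $G_i(m) \ll T^{|k_i|}$ otherwise (the Gamma ratio appearing in the definition of $\mathcal{W}_{k/2,s}$), and the normalizing constant $|C_{f_i}| = |c_{f_i}|/|L(\sym^2 f_i, 1)\gamma_\infty(f_i, k_i)|^{1/2}$ is bounded below by $T^{-T/2 - \eps}$ using Stirling on $\gamma_\infty$ and convexity for $L(\sym^2 f_i, 1)$. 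Targeting the Fourier coefficient of $F$ at $M_0 = \sum_j d_j$, the diagonal vector $\vec m = (1, \ldots, 1)$ contributes
\[
\prod_j |\alpha_{j, 1}(y_0)| \gg T^{-nT/2 - n\eps}\, e^{-2\pi y_0 \sum_j d_j},
\]
using $\lambda_{f_{j,0}}(1) = 1$ and (since $\sum_j k_j = 0$) the cancellation of the factor $y_0^{\sum_j k_j/2}$. Any other admissible $\vec m$ with $\sum_j d_j m_j = M_0$ and $m_j \in \Z\setminus\{0\}$ must satisfy $\sum_j d_j |m_j| \geq M_0 + 2$ (since $m_j \geq 1$ with $\sum_j d_j m_j = M_0$ forces each $m_j = 1$), giving a suppression of at least $e^{-4\pi y_0}$; controlling the Hecke eigenvalues by $|\lambda(m)| \ll m^{\theta + \eps}$ with $\theta < 1/2$, the Gamma ratios by $T^{nT}$ collectively, and the polynomial combinatorial count of such $\vec m$, one obtains that the total off-diagonal contribution is at most $\tfrac12$ of the diagonal for $C$ large enough.

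Parseval in $x$ now yields $\int_0^1 |F(x+iy_0)|^2\,dx \geq |\hat F(M_0; y_0)|^2 \gg \bigl(\prod_j |\alpha_{j,1}(y_0)|\bigr)^2$. Integrating over $y \in [y_0, 2y_0]$, a strip contained in a fundamental domain once $y_0$ is sufficiently large, and using that the integrand decays like $e^{-4\pi y \sum_j d_j}$,
\[
\|F\|_2^2 \gg y_0^{-2}\bigl(\textstyle\sum_j d_j\bigr)^{-1}\, T^{-nT - 2n\eps}\, e^{-4\pi y_0 \sum_j d_j}.
\]
With $\sum_j d_j \leq nN$, $y_0 = CT^{2+\eps}$, and absorbing the factor $T^{-nT}$ and polynomial-in-$T$ corrections into the exponential (noting $T\log T \ll_\eps T^{2+\eps}$ for $T$ large), this yields $\|F\|_2 \gg e^{-cnT^{2+\eps}}$ with $c = c(N, \eps)$ as claimed. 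The principal obstacle is the Maass case: the Gamma ratio $\Gamma((|k|+1)/2 \pm it_f)/\Gamma(1/2 \pm it_f)$ appearing on negative-index Fourier coefficients grows like $T^{|k|}$ and is compensated only by the Whittaker-tail decay $y_0^{-|k|}$, forcing $y_0 \gtrsim T^2$ and thereby dictating the exponent $T^{2+\eps}$ in the final bound.
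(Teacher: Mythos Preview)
Your approach is correct and closely related to the paper's, though packaged differently. The paper establishes a pointwise lower bound $|f_i(z)|\gg e^{-3\pi d_i y}$ for each factor individually at height $y\asymp T^{2+\eps}$ (showing the lowest Fourier mode dominates via \eqref{eq:asymptWhit}), then simply multiplies these bounds and integrates over a small box in $(x,y)$. You instead apply Parseval in $x$ to the product $F$ and isolate the Fourier coefficient at $M_0=\sum_j d_j$, showing the diagonal $\vec m=(1,\ldots,1)$ dominates all other decompositions. Both arguments rest on the same Whittaker asymptotics and the same choice of height $y_0\asymp T^{2+\eps}$; your route is a bit more elaborate but has the pleasant feature of bypassing the need to locate a common $x$-region where all the factors $|e^{2\pi i d_j x}+\eps_{f_j}e^{-2\pi i d_j x}|$ (for the weight-zero Maa{\ss} constituents) are simultaneously bounded below, which the paper handles only implicitly.

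Two minor remarks. First, you invoke $\sum_j k_j=0$, which is not among the hypotheses of the proposition (it does hold in all later applications); the argument works without it, since the stray factor $y_0^{\sum_j k_j/2}$ is of size at most $e^{O(nT\log T)}$ and is absorbed into the exponential. Second, if some $f_j$ with $k_j<0$ arises from an anti-holomorphic form, then its first \emph{positive} Fourier coefficient vanishes and your diagonal term is zero as written; one should instead take $m_j=-1$ for such $j$ and target $M_0=\sum_j \sigma_j d_j$ with $\sigma_j=\pm1$ chosen accordingly. The off-diagonal estimate $\sum_j d_j|m_j|\geq |M_0|+2$ for $\vec m\neq(\sigma_1,\ldots,\sigma_n)$ goes through unchanged.
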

\begin{proof}
Clearly we may assume that $k\geq 0$. Given $f\in \mathcal{B}_k(N)$ we write 
$$f=\nu^*_{d,N'}R_{k-2}\cdots R_{k'}f_0,$$ 
for a Hecke--Maa{\ss} newform $f_0$ of weight $k'$ (with $k'\leq k$ and $k'\equiv k\modulo 2$) and level $N'$ with $dN'|N$. We have by a standard bound for the Hecke eigenvalues (see for instance \cite[(8.7)]{Iw} in the Maa{\ss} case) and by bounding the quotient of $\Gamma$-factors trivially:
\begin{align}
\label{eq:approxWhitt}  &\sum_{n\neq 0} \frac{\lambda_{f}(n)}{|n|^{1/2}}\mathcal{W}_{k/2,it_f}(4\pi nz) \\
\nonumber& = e^{2\pi i d x}W_{k/2,it_f}(4\pi dy)+\eps_f e^{-2\pi i d x}  \frac{\Gamma\left(\frac{k+1}{2}+s\right)\Gamma\left(\frac{k+1}{2}-s\right)}{\Gamma\left(\frac{1}{2}+s\right)\Gamma\left(\frac{1}{2}-s\right)} W_{-k/2,it_f}(4\pi dy)\\
\nonumber &+O\left( |t_f|^{1/2}\sum_{n\geq 2} |W_{k/2,it_f}(4\pi dny)|+(k+|t_f|+1)^k|W_{-k/2,it_f}(4\pi dny)| \right),
\end{align}
where $\eps_f=0$ if $f_0$ is holomorphic and if $f_0$ is a Maa{\ss} form we have $\eps_f=\pm 1$ where $\pm 1$ is the sign of $f_0$ under the reflection operator $X$ defined in Section \ref{sec:autoforms}. By the asymptotics (\ref{eq:asymptWhit}) one sees easily that;
$$\sum_{n\geq 2} |W_{k/2,it_f}(4\pi dny)| +(k+|t_f|+1)^k|W_{-k/2,it_f}(4\pi dny)|\ll e^{-3d\pi y},$$
for $y\geq (|t_f|+k+1)^{2+\eps}$. For $k=0$ we conclude from the asymptotic (\ref{eq:asymptWhit}) that (\ref{eq:approxWhitt}) is equal to
$$  (e^{2\pi i d x}+\eps_f e^{-2\pi i d x})e^{-2\pi dy}+O(y^{-\eps}e^{-2\pi dy}), $$
for $y\geq (|t_f|+k+1)^{2+\eps}$. Similarly $k>0$ we see that (\ref{eq:approxWhitt}) is equal to
$$ e^{2\pi i d x} (4\pi dy)^{k/2}e^{-2\pi dy} +O((4\pi dy)^{k/2-\eps}e^{-2\pi dy}) $$
for $y\geq (|t_f|+k+1)^{2+\eps}$, using the bound
\begin{align*}\frac{\Gamma\left(\frac{k+1}{2}+s\right)\Gamma\left(\frac{k+1}{2}-s\right)}{\Gamma\left(\frac{1}{2}+s\right)\Gamma\left(\frac{1}{2}-s\right)} W_{-k/2,it_f}(4\pi dy) &\ll (k+|t_f|+1)^k (4\pi dy)^{-k/2} e^{-2\pi dy}. \end{align*}

By Stirling's approximation we have the crude bound 
$$\gamma_\infty(f,k)\ll e^{O((|t_f|+k)\log (|t_f|+k))}, $$ 
and we also have  $|t_f|^{-\eps}\ll_\eps L(\sym^2 f, 1) \ll_\eps |t_f|^\eps $. Thus we conclude from (\ref{eq:Fourerphi}) that for $k=0$, say:
\begin{equation}\label{eq:lowerboundphi} |f(z)|\gg  e^{-3\pi  dy },  \end{equation}
for $y\geq (|t_f|+k+1)^{2+\eps}$ and $x$ such that $e^{2\pi i d x}+\eps_f e^{-2\pi i d x}\gg 1$. Similarly if $k>0$, we have, say: 
\begin{equation}\label{eq:lowerboundphi2} |f(z)|\gg e^{-3\pi  dy },  \end{equation}
for $y\geq (|t_f|+k+1)^{2+\eps}$ (and any $x$). Now we easily conclude the wanted lower bound for the $L^2$-norm of the product by computing the contribution from the range $x\in[0,1]$ and $y\asymp (|t_f|+k+1)^{2+\eps}$.
\end{proof}

In the holomorphic case, we can do slightly better since the Fourier expansion is better behaved.
\begin{prop}\label{prop:lowerboundL2holo}
For $i=1,\ldots, n$, let $f_i\in \mathcal{B}_{k_i,\mathrm{hol}}(N)$ be a weight $k_i$ holomorphic Hecke--Maa{\ss} eigenform of level $N$ ($L^2$-normalized). Then we have
$$ ||\prod_{i=1}^n f_i||_2 \gg_\eps e^{-cnT^{1+\eps}}, $$
for all $\eps>0$, where $T=\max_{i=1,\ldots,n } |k_i|$ and $c(N,\eps)=c>0$ is some positive constant.
\end{prop}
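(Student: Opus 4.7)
The plan is to mirror the proof of Proposition~\ref{prop:lowerboundL2General}, exploiting the improvement available in the holomorphic setting: the Whittaker function $W_{k/2, s}(y)$ degenerates to a polynomial multiple of $y^{k/2} e^{-y/2}$ when $s = (k'-1)/2$ for the underlying holomorphic weight $k'$. This removes the need for the threshold $y \asymp T^{2+\eps}$ used in the Maa{\ss} case and permits $y \asymp T$, which is precisely what is needed to replace $T^{2+\eps}$ by $T^{1+\eps}$ in the exponent.

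First I would decompose each $f_i = \nu^*_{d_i, N_i'} R_{k_i-2}\cdots R_{k_0^{(i)}} (y^{k_0^{(i)}/2} g_i)$ for a normalized holomorphic newform $g_i \in \mathcal{S}_{k_0^{(i)}}(N_i')$ with $d_i N_i' \mid N$, and write out the Fourier expansion using (\ref{eq:Fourerphi}). Since $it_{f_i} = (k_0^{(i)} - 1)/2$ is real, the asymptotic series for $W_{k_i/2,(k_0^{(i)}-1)/2}(y)$ truncates to a polynomial $P_i(1/y)$ of degree $(k_i - k_0^{(i)})/2$ times $y^{k_i/2} e^{-y/2}$, with $P_i(0) = 1$; in particular $|P_i(1/y)| \leq 2$ as soon as $y \geq 1$.

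Next I would fix a height $Y = CT$ with $C = C(N, \eps)$ large and analyse $|f_i(z)|$ on the box $B = \{z \in \H : 0 \leq x \leq 1,\ Y \leq y \leq 2Y\}$. The $n = 1$ term of the Fourier series has modulus comparable to
\[
\frac{|c_{f_i}|}{\sqrt{|L(\sym^2 f_i, 1)\, \gamma_\infty(f_i, k_i)|}}\, (4\pi d_i y)^{k_i/2} e^{-2\pi d_i y},
\]
and using the Stirling estimate $\gamma_\infty(f_i, k_i) = \Gamma(k_i)\, \Gamma((k_i - k_0^{(i)})/2 + 1) \leq e^{O(T \log T)}$ together with $L(\sym^2 f_i, 1) \gg_\eps T^{-\eps}$, this is bounded below by $e^{-c_1 T^{1+\eps}}$ for some $c_1 = c_1(N, \eps)$ and $T$ sufficiently large. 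For $n \geq 2$, Deligne's bound $|\lambda_{g_i}(n)| \ll_\eps n^\eps$ together with the bound on $P_i$ reduces matters to comparing $n^{k_i/2 - 1/2 + \eps} e^{-2\pi d_i (n-1) y}$ to $1$; taking $C$ large enough compared to $1/(2\pi d_i)$ makes this a geometric series of ratio $\leq 1/2$, so the tail is dominated by the $n = 1$ term and $|f_i(z)| \gg e^{-c_1 T^{1+\eps}}$ pointwise on $B$.

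Multiplying over $i$ gives $|\prod_i f_i(z)| \gg e^{-c_1 n T^{1+\eps}}$ on $B$, and since $B$ has hyperbolic measure $\asymp 1/Y$, one obtains
\[
\Bigl\| \prod_i f_i \Bigr\|_2^2 \gg Y^{-1} e^{-2 c_1 n T^{1+\eps}},
\]
from which the claim follows after absorbing the polynomial factor $Y^{-1/2}$ into a mild increase of the constant in the exponent. The only genuine obstacle, mild as it is, lies in verifying that the polynomial correction $P_i$ does not upset the geometric decay of the tail when $k_0^{(i)}$ is much smaller than $k_i$, since $P_i$ can then have degree up to $k_i/2$; this reduces to checking term by term that $(4\pi d_i n y)^{k_i/2 - j} e^{-2\pi d_i n y}$ still decays geometrically in $n$ for $y \gg k_i$, which follows from the standard inequality $n^{k_i/2} e^{-\pi d_i n y} \ll 2^{-n}$ once $C$ is chosen large enough.
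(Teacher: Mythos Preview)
Your argument is correct and follows essentially the same route as the paper: go high in the cusp, show that the first Fourier coefficient dominates, and integrate over a box of height $y\asymp T$. The paper uses $y\gg k^{1+\eps}$ to get an absolute tail estimate $O_\eps(e^{-3\pi dy})$, whereas you use $y\asymp T$ and compare the tail to the first term; both choices lead to the same final bound $e^{-cnT^{1+\eps}}$ (the $T^\eps$ in your version comes from the crude Stirling bound on $\gamma_\infty$).

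One point worth flagging: by definition $\mathcal{B}_{k_i,\mathrm{hol}}(N)$ consists of $\nu^*_{d,N'}\,y^{k_i/2}g$ with $g\in\mathcal{S}_{k_i}(N')$ a Hecke newform, so there are no raising operators and your polynomial $P_i$ is identically $1$. Your more general setup with $k_0^{(i)}<k_i$ is unnecessary here, and in that generality the claim ``$|P_i(1/y)|\le 2$ as soon as $y\ge 1$'' is false: the coefficients of $P_i$ can be of size $k_i^{O(k_i)}$, so one would need $y\gg k_i^{1+\eps}$ (or the term-by-term workaround you sketch at the end) to control it. Since the statement only concerns non-raised holomorphic forms, this does not affect your proof, but you should drop the raising operators from the decomposition to keep the argument clean.
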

\begin{proof}
Let $f\in \mathcal{B}_{k,\mathrm{hol}}(N)$ be of the form $\nu^*_{d,N'}y^{k/2}g$ with $g\in \mathcal{S}_k(N')$ a holomorphic Hecke newform. By the Fourier expansion (\ref{eq:Fourerphi}),  we have
$$f(z)=\frac{c_{f}}{ |L(\sym^2 f, 1) \Gamma(k)|^{1/2}}\sum_{n\geq 1} \frac{\lambda_g(n)}{n^{1/2}}(4\pi d n y)^{k/2} e^{2\pi i dnz}.$$
By bounding everything trivially, it is easy to see that for $y\gg k^{1+\eps}$;
$$\sum_{n\geq 1} \frac{\lambda_g(n)}{n^{1/2}}(4\pi dn y)^{k/2} e^{2\pi i dnz}=(4\pi d y)^{k/2} e^{2\pi i d z}+O_\eps(e^{-3\pi d y}). $$
Now the lower bound for $||\prod_{i=1}^n f_i||_2$ follows as above.
\end{proof}

\begin{remark}
It seems quite hard to obtain strong lower bounds for $||\prod_i f_i||_2$ as this is related to the deep problem of non-localization of the eigenfunctions $f_i$ (such as $L^\infty$-bounds), see for instance \cite{Sa95}. In particular it is very hard to rule out that the $f_i$'s localize in disjoint regions. \end{remark}

\section{Proof of main theorem}\label{sec:main}
We will now use the results proved in the previous sections to obtain our wide moment calculation. First of all we will use the above to obtain a version of equidistribution of Heegner points with explicit error-terms. For this we will need the following convenient basis for the space spanned by Maa{\ss} forms of square-free level $N$ (see \cite[Lemma 3.1]{HumKhan20}):
\[\mathcal{B}'(N):=\left\{u_{d} \in C^\infty(\Hb)\cap L^2(\Gamma_0(N)\backslash \Hb) :    N'd|N, u \in \mathcal{B}^{\ast}(N') \right\},\]
(recall that we denote by $\mathcal{B}^{\ast}(N')$ all Hecke--Maa\ss{} newforms $f$ of weight $0$ and level $N'$) where
\begin{equation}\label{eq:oldform}u_{d}(z) := \left(L_d(\sym^2 u,1) \frac{\varphi(d)}{d \nu(N/N')}\right)^{\frac{1}{2}} \sum_{vw = d} \frac{\nu(v)}{v} \frac{\mu(w) \lambda_u(w)}{\sqrt{w}} u(vz).\end{equation}
Here
\[L_{d}(\sym^2 u,s) := \prod_{p | d} \frac{1}{1 - \lambda_u(p^2) p^{-s} + \lambda_u(p^2) p^{-2s} - p^{-3s}}.\]
There is a similar basis for the Eisenstein part of the spectrum (see \cite[Section 3.2]{HumKhan20}). Given $u\in \mathcal{B}'(N)$ we put
$$ L(\sym^2 u,s):=L(\sym^2 u',s),\quad \text{and}\quad L(u,s):=L(u',s),  $$
where $u=(u')_d$ with $u'\in \mathcal{B}^{\ast}(N')$ and $dN'|N$.
\begin{thm}\label{thm:maincomp}
Let $k_1,\ldots, k_n\in 2\Z$ be even integers such that $\sum k_i=0$. For $i=1,\ldots, n$, let $f_i\in \mathcal{B}_{k_i}(N)$ be a Hecke--Maa{\ss} eigenform of fixed level $N$, weight $k_i$ and spectral parameter $t_{f_i}$. Let $|D_K|\rightarrow \infty$ transverse a sequence of discriminants of imaginary quadratic fields $K$ such that all primes dividing $N$ splits in $K$. Then we have 
\begin{align*}
&\frac{1}{|\Cl_K|}\sum_{[Q]\in \Gamma_0(N)\backslash \mathcal{Q}_{D_K}(N,r)} \,\,\prod_{i=1}^n f_i(z_{Q})\\
&= \langle \prod_{i=1}^n f_i,\frac{1}{\vol (\Gamma_0(N))}\rangle +O_\eps \left( || \prod_{i=1}^n f_i||_2 |D_K|^{-1/16 }T^5n^{5} (T|D_K|n)^\eps\right),
\end{align*}  
where $T= \max_{i=1,\ldots, n} |t_{f_i}|+|k_i|+1$. 


We have the following improvements for the exponents in the error-term: 
\begin{equation}\label{eq:caseserrorterm}  \begin{cases}  |D_K|^{-1/16 }T^{5/2}n^{5},& \text{if all $f_i$ are holomorphic},\\ |D_K|^{-1/12 }T^2 n^{2},& \text{if the level is $N=1$},\\|D_K|^{-1/12 }Tn^{2},& \text{if all $f_i$ are holomorphic of level $1$}. \end{cases}\end{equation} 
\end{thm}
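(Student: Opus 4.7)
The plan is to spectrally decompose the product $F := \prod_{i=1}^n f_i$, which lives on $\Gamma_0(N) \backslash \H$ because $\sum k_i = 0$, and then control each spectral Weyl sum via Waldspurger's formula plus subconvexity.

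\textbf{Step 1 (Spectral decomposition).} Using the basis $\mathcal{B}'(N)$ for the cuspidal Maa{\ss} spectrum together with the corresponding Eisenstein basis, write
\[F(z) = \frac{\langle F, 1 \rangle}{\vol(\Gamma_0(N))} + \sum_{u \in \mathcal{B}'(N)} \langle F, u \rangle u(z) + \text{(Eisenstein part)}.\]
Applied to the Heegner average, the constant contribution produces the main term $\langle \prod_i f_i, 1/\vol(\Gamma_0(N))\rangle$, and the task is to bound the remainder.

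\textbf{Step 2 (Weyl sums via Waldspurger).} For any fixed $u \in \mathcal{B}'(N)$ (or Eisenstein series), Corollary~\ref{cor:explicitformula} (applied with $\pi$ the representation generated by $u$ and $\Omega$ the trivial Hecke character of $K$) gives
\[\left|\frac{1}{|\Cl_K|}\sum_{[Q]} u(z_Q)\right|^2 \ll \frac{|D_K|^{1/2}}{|\Cl_K|^2} \cdot \frac{L(u \otimes \chi_K, 1/2)}{L(\sym^2 u, 1)} \cdot c_\infty(u_\infty, 0).\]
Combining Siegel's lower bound $|\Cl_K| \gg_\eps |D_K|^{1/2 - \eps}$ with Burgess subconvexity $L(u \otimes \chi_K, 1/2) \ll |D_K|^{3/8 + \eps}$ yields
\[\frac{1}{|\Cl_K|}\sum_{[Q]} u(z_Q) \ll_\eps |D_K|^{-1/16 + \eps} (1 + |t_u|)^A\]
for some fixed $A$. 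In level $1$ the Conrey--Iwaniec Weyl-strength subconvex bound with exponent $1/6$ improves this to $|D_K|^{-1/12 + \eps}$, explaining the two cases in \eqref{eq:caseserrorterm}. The Eisenstein contribution is handled analogously using Heegner's formula for $L(\chi_K, 1/2)$.

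\textbf{Step 3 (Rapid decay of spectral coefficients).} To control the spectral sum $\sum_u |\langle F, u\rangle|$ uniformly in the parameters of the $f_i$, integrate by parts against the weight-$0$ Laplacian: for $u$ an eigenfunction of $\Delta$ with eigenvalue $\lambda_u = 1/4 + t_u^2$,
\[\langle F, u \rangle = \lambda_u^{-m} \langle \Delta^m F, u \rangle, \quad m \geq 0.\]
By Cauchy--Schwarz, $|\langle \Delta^m F, u\rangle| \leq \|\Delta^m F\|_2$, and the latter is bounded by Lemma~\ref{lem:supDelta} (passing to $L^2$ via the finite-volume fundamental domain) by $\ll n^{2m} (m + T)^{nA + 2m} \prod_i \|f_i\|_2$. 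Choosing $m$ slightly larger than $(nA + \log T)/\log(1 + t_u^2)$ makes the contribution of each spectral component decay faster than any polynomial in $t_u$, so Weyl's law makes the sum over $u$ convergent.

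\textbf{Step 4 (Assembly).} Splitting the spectral sum at a threshold $|t_u| \asymp T \cdot n^{O(1)}$, the low-frequency part is bounded using the $|D_K|^{-1/16 + \eps}$ Weyl-sum bound times $\|F\|_2 \ll \prod_i \|f_i\|_\infty$, while the high-frequency part is dominated via Step~3. Optimising the cutoff produces the polynomial loss $T^5 n^5$ of the stated error. In the holomorphic case one uses the sharper bound from Proposition~\ref{prop:lowerboundL2holo} (and the sharper asymptotic of the Whittaker function) to replace $T^5$ with $T^{5/2}$; in level $1$ the input subconvex exponent improves, giving the other two cases in \eqref{eq:caseserrorterm}.

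\textbf{Main obstacle.} The delicate point is obtaining a uniform polynomial dependence in $T$ and $n$ in the error term while preserving the sharp $|D_K|^{-1/16}$ savings coming from Burgess. This requires balancing the $L^\infty$-bound \eqref{eq:BH} for raised forms against the growth of $\|\Delta^m F\|_2$ from Lemma~\ref{lem:supDelta}, and matching this against the $t_u$-dependence in the local archimedean factor $c_\infty(u_\infty, 0)$ appearing in the Waldspurger formula. Keeping all constants polynomial in $T$ and $n$ is what forces the somewhat lossy exponents on $T$ and $n$.
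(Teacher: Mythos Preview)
Your overall architecture---spectral expansion, Waldspurger-controlled Weyl sums, and Laplacian integration by parts for truncation---matches the paper's. But two mechanisms that produce the stated error in terms of $\|\prod_i f_i\|_2$ (rather than $\prod_i\|f_i\|_2$ or $\|F\|_\infty$) are missing, and the level~$1$ input is misidentified.

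First, in Step~2 the Weyl sum bound is incomplete: for trivial $\Omega$ the Rankin--Selberg central value factors as $L(u,1/2)L(u\otimes\chi_K,1/2)$, so both factors appear in $|W_{u,K}|^2$. More importantly, your Step~4 treatment of the low frequencies is the real gap. Bounding each Weyl sum individually and summing $|\langle F,u\rangle|$ cannot give an error proportional to $\|F\|_2$. The paper instead applies Cauchy--Schwarz and Bessel's inequality to the truncated spectral sum,
\[
\Bigl|\sum_{|t_u|\le M}\langle F,u\rangle\, W_{u,K}\Bigr|\le \|F\|_2\Bigl(\sum_{|t_u|\le M}|W_{u,K}|^2\Bigr)^{1/2},
\]
and then bounds $\sum_{|t_u|\le M}L(u,1/2)L(u\otimes\chi_K,1/2)$ using Blomer--Harcos subconvexity in $D$ for the second factor together with a first-moment (spectral large sieve) bound for the first. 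For level~$1$ the saving $|D_K|^{-1/12}$ comes not from a Conrey--Iwaniec pointwise bound but from H\"older's inequality combined with Young's Lindel\"of-on-average third moment for the twisted values. Second, the high-frequency tail from Step~3 is controlled only in terms of $\prod_i\|f_i\|_2$ via Lemma~\ref{lem:supDelta}; declaring it negligible against an error stated in terms of $\|\prod_i f_i\|_2$ requires the lower bound of Proposition~\ref{prop:lowerboundL2General}, which you do not invoke. The holomorphic improvement then arises precisely because the sharper lower bound of Proposition~\ref{prop:lowerboundL2holo} permits choosing $m=nT^{1+\eps}$ instead of $m=(nT^2)^{1+\eps}$, hence a smaller truncation threshold $M$ and the better $T$-exponent.
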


\begin{proof}
We put $D=|D_K|$ to lighten notation. By the spectral expansion for $\Gamma_0(N)\backslash \Hb$ (see \cite[Theorem 7.3]{Iw}), we have
\begin{align}
\nonumber &\sum_{[Q]\in \Gamma_0(N)\backslash \mathcal{Q}_{-D}(N,r)} \prod_{i=1}^n f_i(z_Q)\\
\label{eq:experror}&= |\Cl_K| \langle\prod_{i=1}^n f_i,1 \rangle+\sum_{u\in \mathcal{B}'(N)} \langle\prod_{i=1}^n f_i, u\rangle W_{u,K} +(\mathrm{Eisenstein}),
\end{align}
where 
$$W_{u,K}:=\sum_{[Q]\in \Gamma_0(N)\backslash \mathcal{Q}_{-D}(N,r)} u(z_Q),$$ 
is the Weyl sum of level $N$ corresponding to $u$. By Theorem \ref{thm:wald}, we have
\begin{equation}\label{eq:boundwald} |W_{u,K}|^2\ll_N \frac{D^{1/2}L(u,1/2)L(u\otimes \chi_{K},1/2)}{L(\sym^2 u,1)}, \end{equation}
for $u\in \mathcal{B}'(N)$. Here the case when $u$ is a linear combination of oldforms as in (\ref{eq:oldform}) follows by linearity. Now we observe that for $u\in \mathcal{B}^*(N)$, we have using the self adjointness of $\Delta$:
\begin{align*}\langle \prod_{i=1}^n f_i, u\rangle (t_u^2+1/4)^m =  \langle\prod_{i=1}^n f_i, \Delta^m u\rangle =  \langle \Delta^m \prod_{i=1}^n f_i, u\rangle, \end{align*}
Applying Cauchy--Schwarz and Lemma \ref{lem:supDelta}, this implies
\begin{align}\label{eq:boundm}\langle \prod_{i=1}^n f_i, u\rangle &\ll \prod_{i=1}^n ||f_i||_2  \frac{n^{2m} (m+T)^{nA+2m} }{(|t_u|^2+1)^m}, \end{align}

for any $m\geq 0$, where $T=\max_{i=1,\ldots, n} |t_{f_i}|+|k_i|+1$. Putting $m=(nT^2)^{1+\eps}$ in the estimate (\ref{eq:boundm}), we see that one can truncate the spectral expansion (\ref{eq:experror}) at $t_u \ll (Tn)^2 (TDn)^\eps$ at the cost of an error of size 
\begin{equation*} \ll_\eps  (TDn)^{-c(nT^2)^{1+\eps}} \prod_{i=1}^n ||f_i||_2  ,\end{equation*}
for some constant $c=c(N,\eps)>0$. By Proposition \ref{prop:lowerboundL2General}, this error is negligible.  

To estimate the remaining terms, we use the bound (\ref{eq:boundwald}) together with Cauchy--Schwarz and Bessel's inequality, non-negativity and standard bounds for symmetric square $L$-functions. This gives;
\begin{align}
\label{eq:uprodphi}&\sum_{\substack{u\in \mathcal{B}'(N),\\ t_u\ll (Tn)^{2} (TDn)^\eps}} \langle\prod_{i=1}^n f_i, u\rangle W_{u,K}\\
\nonumber &\ll_\eps ||\prod_{i=1}^n f_i||_2 D^{1/4} \left(\sum_{N'|N} \sum_{\substack{u\in \mathcal{B}^*(N'),\\ t_u\ll (Tn)^{2} (TDn)^\eps}} L(u,1/2)  L(u\otimes \chi_{K},1/2) \right)^{1/2}  (TDn)^\eps,
\end{align}
where $\chi_K$ is the quadratic character corresponding to $K$ via class field theory (recall that $\mathcal{B}^*(N')$ denotes the set of all Hecke--Maa{\ss} newforms of weight $0$ and level $N'$).

From here on we distinguish between the case of level 1 and higher (square free) level $N$. In the case of general level $N$, we use the $\GL_2$ sub-convexity bound due to Blomer and Harcos \cite{BlomerHarcos08};
$$ L(u\otimes \chi_{K},1/2) \ll (1+|t_u|)^{3+\eps} D^{3/8+\eps}, $$
which gives 
\begin{align*}
&\sum_{\substack{ u\in \mathcal{B}'(N)\\ t_u\ll (T n)^{2} (TDn)^\eps}} \langle\prod_{i=1}^n f_i, u\rangle W_{u,K}\\
&\ll ||\prod_{i=1}^n f_i||_2 D^{1/4+3/16} (Tn)^{3} (TDn)^\eps \left(\sum_{N'|N}\sum_{\substack{u\in \mathcal{B}^*(N') \\ t_u\ll (T n)^{2} (TDn)^\eps}} L(u,1/2)  \right)^{1/2}\\
&\ll     || \prod_{i=1}^n f_i||_2 D^{1/2-1/16} (Tn)^{5} (TDn)^\eps,
\end{align*}
using a standard first-moment bound for $L(u,1/2)$ (for instance using a spectral large sieve). As usual, the Eisenstein contribution can be bounded similarly.

If the level is $1$, we follow Young \cite{Young17} and use H\"older's inequality together with his Lindel\"of strength third moment bound  \cite[Theorem 1.1]{Young17} to estimate the above by
$$ \ll_\eps ||\prod_{i=1}^n f_i||_2 D^{5/12} (Tn)^{2}(TDn)^\eps. $$ 

Finally, if all of the $f_i$ are holomorphic, then by Proposition \ref{prop:lowerboundL2holo} we can use the estimate (\ref{eq:boundm}) with $m=nT^{1+\eps}$ instead, which leads to the improved exponents. 
\end{proof}

\begin{remark}\label{remark:ET}
Alternatively, one can estimate (\ref{eq:uprodphi}) by using the bound:   
$$ \langle \prod_i f_i, u\rangle \ll_\eps t_u^{5/12+\eps}||\prod_i f_i||_1,  $$
where $||\cdot||_1$ denotes the $L^1$-norm, using here the $L^\infty$-bound of Iwaniec and Sarnak \cite{IwSa}. This leads to the following error-term
$$ O_\eps \left( || \prod_{i=1}^n f_i||_1 |D_K|^{-1/16}T^{35/6}n^{35/6} (T|D_K|n)^\eps\right) $$
which is more convenient in some cases (with similar improvements in the special cases of holomorphic and/or level $1$ as in (\ref{eq:caseserrorterm})).
\end{remark}

\subsection{A wide moment of $L$-functions}
Combining this with our explicit formula, we arrive at our main $L$-function computation. We will use the following shorthand for $K$ an imaginary quadratic field with class group $\Cl_K$;
$$\mathbf{Wide}(K,n):=\mathbf{Wide}(\widehat{\Cl_K},n),$$
with $\mathbf{Wide}(G,n)$ as in (\ref{eq:Wide}). Note that the following statement is a slight generalization of Theorem \ref{thm:main} (allowing for the representations not to have the same conductor).
\begin{cor}\label{cor:main}
Let $N\geq 1$ be a fixed square-free integer. For $i=1,\ldots, n$, let $\pi_i$ be a cuspidal automorphic representation of $\GL_2(\A)$ with trivial central character of conductor $N_i|N$, spectral parameter $t_{\pi_i}$ and even lowest weight $k_{\pi_i}$. Let $k_1,\ldots, k_n\in 2\Z$ be integers such that  $|k_i|\geq k_{\pi_i}$ and $\sum_i k_i=0$. 

Let $|D_K|\rightarrow \infty$ transverse a sequence of discriminants of imaginary quadratic fields $K$ such that all primes dividing $N$ splits in $K$. For each $K$ pick Hecke characters $\Omega_{i,K}$ with $\infty$-type $x\mapsto (x/|x|)^{k_i}$ such that $\prod_i \Omega_{i,K}$ is the trivial Hecke character (notice that this is always possible since, we know that $\prod_i \Omega_{i,K}$ is a class group character).

Then we have for $f_i\in \mathcal{B}_{k_i}(N)$ in the representation space of $\pi_i$;
\begin{align}
\nonumber&\sum_{(\chi_i)\in \mathbf{Wide}(K,n)} \prod_{i=1}^n  c_{f_i} \eps_{\chi_i,f_i} L(\pi_i\otimes \chi_i \Omega_{i,K},1/2)^{1/2}\\
\label{eq:maincomp}&=  \frac{|\Cl_K|^n}{|D_K|^{n/4}}\Biggr(\langle\prod_{i=1}^n f_i,\frac{1}{\vol(\Gamma_0(N))}\rangle+O_\eps \left( || \prod_{i=1}^n f_i||_2 |D_K|^{-1/16}T^5n^{5} (T|D_K|n)^\eps\right)\Biggr),
\end{align}  
where $T= \max_{i=1,\ldots, n} |k_i|+|t_{f_i}|+1$, $c_{f_i}=(8N_i)^{-1}c_\infty(\pi_{i,\infty},k_i)$ with $c_\infty$ as in (\ref{eq:cinfty}) and $\eps_{\chi,f_i}$ complex numbers of norm $1$. 

We have the following improvements for the exponents in the error-term: 
\begin{equation} \label{eq:errortermscormain} \begin{cases}  |D_K|^{-1/16 }T^{5/2}n^{5},& \text{if $\pi_i$ are discrete series of weight $k_{\pi_i}=k_i$},\\ |D_K|^{-1/12 }T^2 n^{2},& \text{if the level $N=1$ is trivial},\\|D_K|^{-1/12 }Tn^{2},& \text{if $N=1$ and $\pi_i$ are discrete series of weight $k_{\pi_i}=k_i$}. \end{cases}\end{equation} 


\end{cor}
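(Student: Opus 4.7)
The plan is to obtain (\ref{eq:maincomp}) by a direct Fourier-theoretic combination of the explicit Waldspurger-type formula in Corollary \ref{cor:explicitformula} with the equidistribution estimate in Theorem \ref{thm:maincomp}. Conceptually, the wide moment on the left-hand side of (\ref{eq:maincomp}) is the inverse Fourier transform on $\widehat{\Cl_K}$ of the $n$-fold pointwise product of the maps $\chi\mapsto c_{f_i}\eps_{\chi,f_i}L(\pi_i\otimes\chi\Omega_{i,K},1/2)^{1/2}$, evaluated at the identity of $\widehat{\Cl_K}$. Corollary \ref{cor:explicitformula} identifies the Fourier transform of each of these maps as the Heegner-point period $[\mathfrak{a}]\mapsto f_i(z_Q)\Omega_{i,K}(x_Q)$, so that the wide moment becomes an $n$-fold convolution of such periods, which is then evaluable by equidistribution.

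Concretely, for each $i=1,\dots,n$, Corollary \ref{cor:explicitformula} provides
$$ f_i(z_Q)\,\Omega_{i,K}(x_Q) = \frac{c_{f_i}\,|D_K|^{1/4}}{|\Cl_K|}\sum_{\chi_i\in\widehat{\Cl_K}} \eps_{\chi_i,f_i}\,L(\pi_i\otimes\chi_i\Omega_{i,K},1/2)^{1/2}\,\chi_i([\mathfrak{a}_Q]), $$
where $[\mathfrak{a}_Q]\in\Cl_K$ is the ideal class attached to $Q$ and, crucially, $x_Q\in\A_K^\times$ depends only on $Q$ and not on the triple $(\pi_i,\Omega_{i,K},f_i)$. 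Multiplying these $n$ identities and using that $\prod_i\Omega_{i,K}$ is the trivial id\`elic Hecke character, so that $\prod_i\Omega_{i,K}(x_Q)=1$, yields
$$ \prod_{i=1}^n f_i(z_Q) = \frac{|D_K|^{n/4}}{|\Cl_K|^n}\sum_{(\chi_i)\in\widehat{\Cl_K}^n}\!\left(\prod_{i=1}^n c_{f_i}\eps_{\chi_i,f_i}\,L(\pi_i\otimes\chi_i\Omega_{i,K},1/2)^{1/2}\right)\prod_{i=1}^n \chi_i([\mathfrak{a}_Q]). $$

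Next, sum over $[Q]\in\Gamma_0(N)\backslash\mathcal{Q}_{D_K}(N,r)$, which under the bijection with $\Cl_K$ amounts to summing over $[\mathfrak{a}]\in\Cl_K$. Orthogonality of characters gives $\sum_{[\mathfrak{a}]}\prod_i\chi_i([\mathfrak{a}]) = |\Cl_K|\cdot\mathbf{1}_{\prod_i\chi_i=1}$, which restricts the tuple sum to $\mathbf{Wide}(n,K)$ and produces a factor of $|\Cl_K|$. Dividing by $|\Cl_K|$, one obtains
$$ \frac{1}{|\Cl_K|}\sum_{[Q]}\prod_{i=1}^n f_i(z_Q) = \frac{|D_K|^{n/4}}{|\Cl_K|^n}\sum_{(\chi_i)\in\mathbf{Wide}(n,K)}\prod_{i=1}^n c_{f_i}\eps_{\chi_i,f_i}\,L(\pi_i\otimes\chi_i\Omega_{i,K},1/2)^{1/2}. $$
Applying Theorem \ref{thm:maincomp} to the left-hand side and multiplying through by $|\Cl_K|^n/|D_K|^{n/4}$ produces (\ref{eq:maincomp}). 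The sharper error terms in (\ref{eq:errortermscormain}) are inherited term-by-term from (\ref{eq:caseserrorterm}), since the hypothesis that $\pi_i$ is a discrete series of weight $k_{\pi_i}=k_i$ is exactly the condition that $f_i\in\mathcal{B}_{k_i,\mathrm{hol}}(N)$.

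There is no substantive analytic obstruction at this stage: Corollary \ref{cor:explicitformula} (the packaged Waldspurger formula) and Theorem \ref{thm:maincomp} (the equidistribution input together with the bounds on $\langle\prod_i f_i,u\rangle$ and the Weyl sums) carry the full weight of the argument. The only points requiring care are (a) checking that the element $x_Q$ in Corollary \ref{cor:explicitformula} is genuinely common to all indices $i$, so that the cancellation $\prod_i\Omega_{i,K}(x_Q)=1$ is legitimate — this is explicit in the statement of Corollary \ref{cor:explicitformula} — and (b) confirming that the constants $c_{f_i}$ assembled here are those given by (\ref{eq:ctestvector}), so that the announced dependence on the local constants $c_\infty(\pi_{i,\infty},k_i)$ and the conductors $N_i$ is correct.
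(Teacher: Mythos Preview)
Your proof is correct and follows essentially the same route as the paper: apply Corollary~\ref{cor:explicitformula} to each $f_i$, multiply the resulting identities using that $x_Q$ is independent of $i$ and $\prod_i\Omega_{i,K}$ is trivial, sum over $[Q]$ and invoke orthogonality of class group characters to isolate $\mathbf{Wide}(n,K)$, then feed the Heegner average into Theorem~\ref{thm:maincomp}. Your explicit flagging of points (a) and (b) matches precisely the two places where care is needed.
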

\begin{proof}
By the fact that $\prod_i \Omega_{i,K}$ is trivial, we see that
$$ \prod_{i=1}^n f_i(z)=\prod_{i=1}^n \Omega_{i,K}(x)f_i(z)  $$
for any $x\in \A_K^\times$. In particular, if we fix a quadratic form $Q\in \mathcal{Q}_{D_K}(N,r)$ and choose $x=x_Q\in \A_K^\times$ as in Corollary \ref{cor:explicitformula}, then we get
\begin{align*}
\prod_{i=1}^n f_i(z_Q)&=\prod_{i=1}^n \Omega_{i,K}(x_Q)f_i(z_{Q})\\
& =\sum_{\chi_1,\ldots, \chi_n\in \widehat{\Cl_K}} \prod_{i=1}^n \eps_{\chi_i ,i}  c_{f_i}\frac{|D_K|^{1/4}}{|\Cl_K|} |L(\pi_i\otimes \chi_i \Omega_{i,K} , 1/2)|^{1/2} \chi([\mathfrak{a}]).
\end{align*}
Summing this identity over a set of representatives for $\Gamma_0(N)\backslash \mathcal{Q}_{D_K}(N,r)\cong \Cl_K $, applying Theorem \ref{thm:maincomp} and using orthogonality of class group characters (i.e. the Fourier theoretic equality (\ref{eq:widemoment})$=$(\ref{eq:widemoment2})), we arrive at the wanted. 
\end{proof}
\begin{remark}
The fact that we have $||\prod_i f_i||_2$ in the error term and not, say, $L^\infty$-norms turns out to be crucial for applications to non-vanishing (see Subsection \ref{subsec:holo}).
\end{remark}

\subsection{The diagonal case}
In this subsection we will use Corollary \ref{cor:main} to calculate another family of moments. For this consider the following {\lq\lq}non-trivial diagonal{\rq\rq}:
$$ \mathbf{Wide}_{\mathrm{ntd}}(\widehat{G},2n):=\{(\chi_1,\psi_1, \ldots, \chi_n, \psi_n)\in (\widehat{G})^{2n}:  \chi_i\neq \psi_i, \prod_{i=1}^n \chi_i= \prod_{i=1}^n\psi_i\}.$$
The starting point is the following lemma. 
\begin{lemma}\label{lem:diagonalcase}
Let $G$ be a finite abelian group and $L_1,\ldots, L_n: G\rightarrow \C$ maps. Then we have
\begin{align*} 
 &\sum_{(\chi_i,\psi_i) \in\mathbf{Wide}_{\mathrm{ntd}}(\widehat{G},2n)} \prod_{i=1}^n\widehat{L_i}(\chi_i)\overline{\widehat{L_i}(\psi_i)}\\
 &=\frac{1}{|G|}\sum_{M\subset \{1,\ldots, n\}}(-1)^{|M|}  (\sum_{g\in G} \prod_{i\notin M } |L_i(g)|^2)\prod_{i\in M } (\sum_{g\in G}  |L_i(g)|^2).
\end{align*}
Here $\widehat{L}: \widehat{G}\rightarrow \C$ denotes the Fourier transform given by $\chi\mapsto \frac{1}{|G|}\sum_{g\in G} L(g)\overline{\chi}(g) $. 
\end{lemma}
\begin{proof}
By the principle of inclusion and exclusion, we have 
\begin{align}
&\sum_{(\chi_i,\psi_i) \in \mathbf{Wide}_{\mathrm{ntd}}(\widehat{G},2n)} \prod_{i=1}^n\widehat{L_i}(\chi_i)\overline{\widehat{L_i}(\psi_i)} \\
=&\sum_{M\subset \{1,\ldots, n\}}(-1)^{|M|} \sum_{\substack{(\chi_1,\overline{\psi_1},\ldots, \chi_n,\overline{\psi_n}) \in\mathbf{Wide}(\widehat{G},2n)\\ \chi_i= \psi_i, i\in M}} \prod_{i=1}^n\widehat{L_i}(\chi_i)\overline{\widehat{L_i}(\psi_i)},
\end{align}
where the sum is over all subsets $M$ of $\{1,\ldots, n\}$. Furthermore, we have 
\begin{align}
&\sum_{\substack{(\chi_1,\overline{\psi_1},\ldots, \chi_n,\overline{\psi_n}) \in\mathbf{Wide}(\widehat{G},2n)\\ \chi_i= \psi_i,  i\in M}} \prod_{i=1}^n\widehat{L_i}(\chi_i)\overline{\widehat{L_i}(\psi_i)}\\
&=\left(\sum_{(\chi_i,\overline{\psi_i})_{i\notin M} \in \mathbf{Wide}(\widehat{G}, 2(n-|M|))} \prod_{i\notin M}\widehat{L_i}(\chi_i)\overline{\widehat{L_i}(\psi_i)}\right)\prod_{i\in M} \left( \sum_{\chi\in \widehat{G}}|\widehat{L_i}(\chi)|^2 \right), 
\end{align}
from which the result follows using the Fourier theoretic equality (\ref{eq:widemoment})$=$(\ref{eq:widemoment2}).
\end{proof}

From this we get the following corollary. 
\begin{cor}\label{cor:diagonalmoment}
Let $\pi_i, K, k_i$ be as in Corollary \ref{cor:main}. For $i=1,\ldots, n$, let $ \Omega_{i,K}$ be a Hecke character of $K$ of $\infty$-type $\alpha\mapsto (\alpha/|\alpha|)^{k_i}$ and $f_i\in \mathcal{B}_{k_i}(N)$ in the representation space of $\pi_i$. Then we have:

\begin{align*}
&\sum_{(\chi_i,\psi_i)\in \mathbf{Wide}_{\mathrm{ntd}}(K,2n)} \prod_{i=1}^n \eps_{\chi_i,\psi_i,f_i} |c_{f_i}|^2 L(\pi_i\otimes \chi_i \Omega_{i,K},1/2)^{1/2}L(\pi_i\otimes \psi_i \Omega_{i,K},1/2)^{1/2}\\
& = \frac{|\Cl_K|^{2n}}{|D_K|^{n/2}}\Biggr(\sum_{M\subset \{1,\ldots, n\}}(-1)^{|M|}|| \prod_{i\notin M }f_i  ||^2 \cdot  \prod_{i\in M }|| f_i ||^2\\
&\qquad \qquad \qquad+O_\eps \left( \prod_{i=1}^n || f_i||_\infty |D_K|^{-1/16}T^5n^{5}2^n (T|D_K|n)^\eps\right)\Biggr),
\end{align*}  
as $|D_K|\rightarrow \infty$, where $c_{f_i}=(8N_i)^{-1}c_\infty(\pi_{i,\infty},k_i)$ with $c_\infty(\pi_{i,\infty},k_i)$ as in (\ref{eq:cinfty}) and $\eps_{\chi,\psi,f_i}$ complex numbers of norm $1$.  
\end{cor}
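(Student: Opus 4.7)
The plan is to combine Lemma~\ref{lem:diagonalcase} with the explicit Waldspurger formula of Corollary~\ref{cor:explicitformula} and the equidistribution input provided by Theorem~\ref{thm:maincomp}. First, I would apply Corollary~\ref{cor:explicitformula} and read it as a Fourier identity on $\Cl_K$: the function
\[
\widehat{L_i}(\chi):=\eps_{\chi,f_i,r}\,c_{f_i}\,L(\pi_i\otimes\chi\Omega_{i,K},1/2)^{1/2}
\]
is the Fourier transform of $L_i:\Cl_K\to\C$ defined by $L_i([\mathfrak{a}]):=|D_K|^{-1/4}f_i(z_Q)\Omega_{i,K}(x_Q)$, where $Q$ is any Heegner form representing $[\mathfrak{a}]$. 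Packaging the phases into $\eps_{\chi_i,\psi_i,f_i}:=\eps_{\chi_i,f_i,r}\overline{\eps_{\psi_i,f_i,r}}$ (which still has modulus one), the left-hand side of the corollary coincides with the spectral expression
\[
\sum_{(\chi_i,\psi_i)\in\mathbf{Wide}^*(2n,K)}\prod_{i=1}^n\widehat{L_i}(\chi_i)\overline{\widehat{L_i}(\psi_i)}
\]
appearing on the right-hand side of Lemma~\ref{lem:diagonalcase}.

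Next I would invoke Lemma~\ref{lem:diagonalcase} with $G=\Cl_K$ to rewrite this spectral sum as a signed sum, indexed by subsets $M\subseteq\{1,\ldots,n\}$, of products of the spatial quantities $\sum_g\prod_{i\notin M}|L_i(g)|^2$ and $\prod_{i\in M}\sum_g|L_i(g)|^2$. Using the explicit form of $L_i$ together with $|\Omega_{i,K}|=1$, these spatial sums translate into
\[
\frac{1}{|D_K|^{(n-|M|)/2}}\sum_{[Q]}\prod_{i\notin M}|f_i(z_Q)|^2\qquad\text{and}\qquad\frac{1}{|D_K|^{1/2}}\sum_{[Q]}|f_i(z_Q)|^2.
\]
The crucial observation is that $\prod_{i\notin M}|f_i|^2=\prod_{i\notin M}f_i\overline{f_i}$ is a $\Gamma_0(N)$-invariant product of $2(n-|M|)$ automorphic factors of weights $k_i$ and $-k_i$ whose sum vanishes (and likewise each $|f_i|^2$). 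Theorem~\ref{thm:maincomp} therefore applies to each sum and delivers the main terms $|\Cl_K|\cdot\|\prod_{i\notin M}f_i\|_2^2/\vol(\Gamma_0(N))$ and $|\Cl_K|\cdot\|f_i\|_2^2/\vol(\Gamma_0(N))$ together with explicit error terms.

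Finally, I would collect the resulting main and error terms. The powers of $|\Cl_K|$ and $|D_K|$ assemble into the stated prefactor $|\Cl_K|^{2n}/|D_K|^{n/2}$, and the signed sum over $M$ reproduces precisely the inclusion-exclusion expression in the corollary. The hard part will be the bookkeeping of the error terms: each of the $2^n$ subsets $M$ contributes an error from Theorem~\ref{thm:maincomp} applied with $2(n-|M|)$ factors, and summing these contributions produces the $2^n$-factor in the claimed error bound. To convert the $L^2$-norm appearing in Theorem~\ref{thm:maincomp} into the $\prod_i\|f_i\|_\infty$ dependence stated in the corollary, I would use the $L^1$-variant of the error described in Remark~\ref{remark:ET}, combined with the elementary identity $\|\prod_{i\notin M}|f_i|^2\|_1=\|\prod_{i\notin M}f_i\|_2^2$ and an $L^2$-$L^\infty$ interpolation exploiting the $L^2$-normalization $\|f_i\|_2=1$.
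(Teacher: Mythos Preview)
Your approach matches the paper's: both combine Lemma~\ref{lem:diagonalcase} with the equidistribution asymptotic, the paper simply citing Corollary~\ref{cor:main} where you unpack it via Corollary~\ref{cor:explicitformula} and Theorem~\ref{thm:maincomp}. One correction on the error term: the $L^1$-variant of Remark~\ref{remark:ET} carries the worse exponents $T^{35/6}n^{35/6}$ and so cannot recover the stated $T^5n^5$; the paper instead keeps the $L^2$-error from Corollary~\ref{cor:main} and bounds the norms $\|\prod_{i\notin M}|f_i|^2\|_2$ and the accompanying cross-terms directly by sup-norms of the $f_i$. A small bookkeeping caution on your claim that the powers of $|\Cl_K|$ assemble into $|\Cl_K|^{2n}$: if you literally apply the left-hand side of Lemma~\ref{lem:diagonalcase} and then Theorem~\ref{thm:maincomp}, each $M$-term only produces $|\Cl_K|^{|M|+1}$; the missing factor $|\Cl_K|^{2n-|M|-1}$ comes from the Fourier identity relating the $\mathbf{Wide}(2(n-|M|),\widehat{\Cl_K})$-sum to a single Heegner sum, and going through Corollary~\ref{cor:main} as the paper does makes this power appear automatically.
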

\begin{proof}
The result follows from Lemma \ref{lem:diagonalcase} combined with Corollary \ref{cor:main} by bounding the norms in the error-terms by the $L^\infty$-norms of the $f_i$.
\end{proof}

\section{Applications to non-vanishing}\label{sec:applications}
Clearly, Corollary \ref{cor:main} gives a way to produce weak simultaneous non-vanishing results (in the sense of Section \ref{sec:weaknonv}) given that one has 
\begin{equation}\label{eq:prodnot0}\left\langle \prod_{i=1}^n f_i,1\right\rangle\neq 0.\end{equation}
In this section we show non-vanishing as in (\ref{eq:prodnot0}) in a number of different cases.

The simplest case is $n=2$ and $f_1=\overline{f_2}$ (which is the one considered by Michel and Venkatesh \cite{MichVenk06}) where the period is the $L^2$-norm and thus automatically non-zero. Using our quantitive moment calculation Corollary \ref{cor:main}, we obtain a uniform version of \cite[Theorem 1]{MichVenk06} in the general weight case.  

The case $n=3$ is also very appealing since the corresponding triple periods are connected to triple convolution $L$-functions via the Ichino--Watson formula \cite{Ichino08},  \cite{Watson02}. There are some prior work obtaining non-vanishing of triple periods, which immediately give weak simultaneous non-vanishing using Corollary \ref{cor:main}. Reznikov \cite{Reznikov01} showed using representation theory that for any Maa{\ss} form $f$ of level $N$, there are infinitely many Maa{\ss} forms $f_1$ of level dividing $N$ such that $\langle f^2,f_1\rangle\neq 0$ (in the level $1$ case this was reproved by X. Li \cite{Li09} using more analytic methods). Similarly, the quantum variance computation of Luo and Sarnak \cite{LuoSarnak04} implies the following; for any Hecke--Maa{\ss} eigenform $f$ with $L(f,1/2)\neq 0$ there are $\gg K$ many holomorphic newforms $g\in\mathcal{S}_k(1)$ with $K\leq k\leq 2K$ such that $\langle y^k|g|^2,f\rangle\neq 0$ (see also \cite{SugiTsuzuki18}). One gets similar non-vanishing with $f$ a Hecke--Maa{\ss} newform using the corresponding quantum variance computation by Zhao and Sarnak \cite{SarnakZhao19}. Note that the non-vanishing results for triple periods $\langle f_1f_2f_3,1\rangle$ obtained in the above mentioned papers all have two of the forms equal. In terms of applications to non-vanishing these result are not that interesting. Motivated by this, we introduce below a method for obtaining non-vanishing for $n=3$ where all of the forms $f_1,f_2,f_3$ are different.

Finally in the holomorphic case, we can show non-vanishing of periods for general $n$ using a very soft argument.
\subsection{The second moment case}
In this subsection, we consider the simplest case of $n=2$ in which the non-vanishing of the main term in (\ref{eq:maincomp}) is automatic. In particular this gives an improved version of \cite[Theorem 1]{MichVenk06} with uniformity in the spectral aspect and generalizes the results to general weights.

\begin{cor}\label{cor:n=2}
Let $N$ be a fixed square-free integer and $\eps>0$. Let $\pi$ be a cuspidal automorphic representation of $\GL_2(\A)$ of level $N$, spectral parameter $t_\pi$ and even lowest weight $k_\pi$. Let $k$ be an even integer such that $|k|\geq k_\pi$ and put $T=|t_\pi|+k+1$. 

Then there exists a constant $c=c(N, \eps )>0$ such that for any imaginary quadratic field $K$ such that all primes dividing $N$ splits in $K$ with discriminant $|D_K|\geq c T^{160/3+\eps}$ (resp. $|D_K|\geq c T^{22+\eps}$ if $N=1$) , we have
$$ \#\{\chi\in \widehat{\Cl_K}: L(\pi\otimes \chi \Omega_{K},1/2)\neq 0\}\gg_\pi \begin{cases} |D_K|^{1/1058},& \text{if $\pi$ is d.s,}\\ |D_K|^{1/2648},& \text{if $\pi$ is p.s,}\end{cases}$$
where $\Omega_K$ is a Hecke character of $K$ of conductor $1$ and $\infty$-type $\alpha\mapsto (\alpha/|\alpha|)^k$.
\end{cor}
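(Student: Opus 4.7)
The approach is to specialize Corollary \ref{cor:main} to $n=2$ with the test vectors chosen so that the resulting wide moment is non-negative and hence has a non-vanishing main term. Pick an $L^2$-normalized $f_1 = f \in \mathcal{B}_k(N)$ in the representation space of $\pi$, and take $f_2 = \overline{f_1} \in \mathcal{B}_{-k}(N)$, which lies in the contragredient $\bar\pi$; because $\pi$ has trivial central character, $\pi$ is self-dual and the associated $L$-functions coincide, so that $L(\bar\pi \otimes \chi^{-1}\overline{\Omega_K},1/2) = \overline{L(\pi \otimes \chi\Omega_K,1/2)}$. Choose $\Omega_{1,K} = \Omega_K$ and $\Omega_{2,K} = \overline{\Omega_K}$ so that their product is trivial as required by the corollary. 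On the wide moment side, $\chi_1\chi_2 = 1$ forces $\chi_2 = \chi_1^{-1}$, and the pair of square-roots combines into $|L(\pi \otimes \chi_1\Omega_K,1/2)|$. The main term is $\langle |f|^2, 1/\vol(\Gamma_0(N))\rangle = \|f\|_2^2/\vol(\Gamma_0(N)) > 0$, so the leading term in \eqref{eq:maincomp} does not vanish.

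Bounding the unit phases $\eps_{\chi_i,f_i}$ in modulus by $1$ and invoking Siegel's (ineffective) lower bound $|\Cl_K| \gg_\eps |D_K|^{1/2-\eps}$ produces
\[
\sum_{\chi \in \widehat{\Cl_K}} \bigl|L(\pi \otimes \chi\Omega_K,1/2)\bigr| \,\gg_{\pi,\eps}\, |D_K|^{1/2-\eps},
\]
provided $|D_K|$ is large enough that the error term in \eqref{eq:maincomp} is dominated by the main term. The thresholds $|D_K| \geq cT^{160/3+\eps}$ (respectively $cT^{22+\eps}$ at level $1$) are then extracted by tracking the polynomial $T$-dependence arising from $c_f = (8N)^{-1} c_\infty(\pi_\infty,k)$, the factor $\|f_1 f_2\|_2 \leq \|f\|_\infty \|f\|_2$ controlled via \eqref{eq:BH}, and the error-term exponents in \eqref{eq:errortermscormain}; the stronger saving at level $1$ uses Young's Lindel\"of-strength third moment bound that feeds into \eqref{eq:errortermscormain}.

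To convert this $\ell^1$ lower bound into a count of non-vanishing central values, I would insert a subconvexity bound
\[
L(\pi \otimes \chi\Omega_K, 1/2) \,\ll_\pi\, |D_K|^{1/2 - \delta + \eps}
\]
of the type established by Duke--Friedlander--Iwaniec, Blomer--Harcos, Michel--Venkatesh and Templier, and apply the trivial inequality
\[
\#\{\chi \in \widehat{\Cl_K} : L(\pi \otimes \chi\Omega_K, 1/2) \neq 0\} \,\geq\, \frac{\sum_\chi |L(\pi \otimes \chi\Omega_K,1/2)|}{\max_\chi |L(\pi \otimes \chi\Omega_K,1/2)|} \,\gg_\pi\, |D_K|^{\delta - \eps}.
\]
The specific exponents $1/1058$ (holomorphic) and $1/2648$ (Maa{\ss}) correspond to the best currently available values of $\delta$ in the $D_K$-aspect, the holomorphic case enjoying a stronger subconvex saving than the principal series case.

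The main obstacle I foresee is not conceptual but computational: one must carefully trace the polynomial growth of $c_\infty(\pi_\infty,k)^{-1}$, the sup-norm $\|f\|_\infty$, and the implicit constants in \eqref{eq:errortermscormain} through to the threshold on $|D_K|$, which is what fixes the exponents $160/3+\eps$ and $22+\eps$. The positivity of central $L$-values needed to interpret $|L|$ is built into Waldspurger's formula (via Corollary \ref{cor:explicitformula}); Siegel's bound makes $c(N,\eps)$ ineffective, but this is standard and unavoidable for first-moment based non-vanishing arguments.
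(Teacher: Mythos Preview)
Your overall strategy---specialize Corollary~\ref{cor:main} to $n=2$ with $f_2=\overline{f_1}$, use positivity of the main term $\|f\|_2^2/\vol(\Gamma_0(N))$, Siegel's bound, and then subconvexity to count non-vanishing---is the right one, and the final step bounding the non-vanishing set from below via $\max_\chi|L|$ is exactly what the paper does (invoking Michel and Harcos--Michel). But there is a real gap in how you propose to extract the explicit thresholds $|D_K|\geq cT^{160/3+\eps}$ and $|D_K|\geq cT^{22+\eps}$.

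You bound $\|f_1f_2\|_2\leq \|f\|_\infty\|f\|_2$ and invoke \eqref{eq:BH}. But \eqref{eq:BH} carries an \emph{unspecified} exponent $A$, so this route can only give a threshold of the shape $|D_K|\gg T^{C(A)}$ with $C(A)$ depending on $A$; it cannot produce the stated numbers $160/3$ and $22$. (Also, the constants $c_{f_i}$ sit on the left-hand side of \eqref{eq:maincomp} and play no role in the comparison of main term versus error term; they do not enter the threshold.) The paper avoids this entirely by switching to the $L^1$-variant of the error term from Remark~\ref{remark:ET}: since here $f_1f_2=|f|^2$, one has $\|f_1f_2\|_1=\|f\|_2^2=1$ exactly. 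With this trivial lower bound available, the truncation of the spectral expansion in the proof of Theorem~\ref{thm:maincomp} can be sharpened from $t_u\ll (Tn)^{2}$ to $t_u\ll T^{1+\eps}$ (no recourse to Proposition~\ref{prop:lowerboundL2General} is needed), and re-running the estimates yields the explicit errors $|D_K|^{-1/16}T^{20/6}$ for general square-free level and $|D_K|^{-1/12}T^{11/6}$ for level $1$. Solving for when these equal the main term gives precisely $160/3$ and $22$. So the missing ingredient is to replace $\|f_1f_2\|_2$ by $\|f_1f_2\|_1=1$ via Remark~\ref{remark:ET} and to redo the truncation accordingly.
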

\begin{proof}
Let $\pi$ be as in the corollary above. We apply Corollary \ref{cor:main} with the error-term coming from Remark \ref{remark:ET} and with $\pi_1=\pi_2=\pi$ and $f_1=\overline{f_2}$ belonging to $\pi$ of weight $k\geq k_\pi$. In this special case it is clear that we can truncate the spectral expansion (\ref{eq:experror}) at $t_u\ll T^{1+\eps} |D_K|^\eps$ at a negligible error since the lower bound $||f_1f_2||_1=||f_1||_2=1$ (for any $f_1$ as above) is automatic. Thus both in the (raised) holomorphic and Maa{\ss} case we have the following error-terms: 
$$ O_\eps\left( |D_K|^{-1/16} T^{20/6} (|D_K|T)^\eps   \right) \quad \text{for general level $N$},$$
and   
$$ O_\eps\left(  |D_K|^{-1/12} T^{11/6} (|D_K|T)^\eps  \right) \quad \text{for level $N=1$}.$$
From this we see that for $|D_K|\geq c T^{160/3+\eps}$ (resp. $|D_K|\geq c T^{22+\eps}$), the RHS of (\ref{eq:maincomp}) is non-zero. Thus the LHS (\ref{eq:maincomp}) is also non-zero and satisfies $\gg_{\eps,k} |D_K|^{1/4-\eps}$ using the lower bound $|\Cl_K|\gg_\eps |D_K|^{1/2-\eps}$ due to Siegel. Now the result follows directly using the subconvexity bounds for Rankin--Selberg $L$-functions due to Michel \cite{Michel04} and Harcos--Michel \cite{HarcosMichel06}.  
\end{proof}

\subsection{Triple products of Maa{\ss} forms}
A very attractive case of Corollary \ref{cor:main} is $n=3$ where the non-vanishing of $\langle f_1f_2f_3,1\rangle$ is equivalent to the non-vanishing of the triple convolution $L$-function $L(\pi_1\otimes \pi_2\otimes \pi_3,1/2)$ due to the Ichino--Watson formula \cite{Ichino08}, \cite{Watson02}. In this section we introduce a soft method (relying on results of Lindenstrauss and Jutila--Motohashi) to derive non-vanishing results in the case where  $f_1,f_2,f_3$ are all Maa{\ss} forms of level $1$. 

By the spectral expansion for $L^2(\SL_2(\Z)\backslash \Hb)$ \cite[Theorem 7.3]{Iw} we have
\begin{align}\label{eq:spectralexp}
|| f_1f_2 ||_2^2=\langle f_1f_2, f_1f_2\rangle=\sum_{f\in \mathcal{B}_{0}(1)} |\langle f_1f_2, f \rangle|^2 +\frac{1}{4\pi}\int_{\R}  |\langle f_1f_2, E_t \rangle|^2 dt,
\end{align}
where $E_t(z)=E(z,1/2+it)$ is the non-holomorphic Eisenstein series of level $1$. Using the Ichino--Watson formula \cite{Ichino08}, \cite{Watson02} (which in the Eisenstein case reduces to Rankin--Selberg), we have
\begin{equation*}
|\langle f_1f_2, f \rangle|^2=\frac{L(f_1\otimes f_2 \otimes f,1/2)}{8L(\sym^2 f_1,1)L(\sym^2 f_2,1)L(\sym^2 f,1)} h(t_{f_1},t_{f_2},t_{f})\end{equation*}
and 
\begin{equation*}
|\langle f_1f_2, E_t \rangle|^2=\frac{|L(f_1\otimes f_2 ,1/2+it)|^2}{4L(\sym^2 f_1,1)L(\sym^2 f_2,1)|\zeta(1+2it)|^2} h(t_{f_1},t_{f_2},t)\end{equation*}
where
\begin{equation*}
h(t_1,t_2,t_3)=\frac{\prod_{\pm} \Gamma\left(\frac{1}{4} \pm \frac{it_{1}}{2}\pm  \frac{it_{2}}{2}\pm  \frac{it_3}{2}\right)}{ |\Gamma\left(\frac{1}{2}+it_{1}\right)|^2|\Gamma\left(\frac{1}{2}+it_{2}\right)|^2|\Gamma\left(\frac{1}{2}+it_{3}\right)|^2}.
\end{equation*}
Here the product is over all 8 combinations of signs. If we fix $t_1$, then it is standard using Stirling's approximation to prove that for $t_2,t_3\gg 1$, we have
$$ h(t_1,t_2,t_3)\ll_{t_1} e^{-\pi |t_2-t_3|}(1+|t_2-t_3|)^{-1}(1+t_2+t_3)^{-1}.  $$
This shows that the contribution from respectively, $|t-t_{f_2}|\geq (t_{f_2})^\eps$ and $|t_f-t_{f_2}|\geq (t_{f_2})^\eps$ in (\ref{eq:spectralexp}) is negligible. 

We would like to show that actually all of the contribution from the Eisenstein part in (\ref{eq:spectralexp}) is negligible. This is connected to the subconvexity problem for Rankin--Selberg $L$-functions in a conductor dropping region and is thus very difficult. We can however get unconditional results if we keep $f_1$ fixed and average over $f_2$ using the following result due to Jutila and Motohashi  \cite[(3.50)]{JutilaMoto05}.
\begin{thm}[Jutila--Motohashi]\label{thm:JutMoto} Let $f_1\in \mathcal{B}_{0}(1)$ be fixed. Then we have
\begin{equation}  \sum_{ |t_{f_2}-T|\leq T^\eps} |L(f_1\otimes f_2,1/2+it)|^2\ll_{\eps} T^{1+\eps},    \end{equation}
uniformly for $| t-T|\ll T^{\eps}$.
\end{thm}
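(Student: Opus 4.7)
The plan is to establish this spectral second moment bound by combining an approximate functional equation with an application of Kuznetsov's trace formula in a short spectral window. To begin, since $f_1$ is fixed while $t_{f_2} \sim t \sim T$, the analytic conductor of the Rankin--Selberg $L$-function $L(f_1\otimes f_2, 1/2+it)$ is of size $T^4$. An approximate functional equation therefore represents the central value as a Dirichlet sum of length $\asymp T^{2+\eps}$:
\begin{equation*}
L(f_1\otimes f_2, 1/2+it) = \sum_{n} \frac{\lambda_{f_1}(n)\lambda_{f_2}(n)}{n^{1/2+it}} V\!\left(\frac{n}{T^2}\right) + (\text{dual sum}),
\end{equation*}
where $V$ is a smooth cut-off decaying rapidly beyond $T^{2+\eps}$.

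Next I would open the square to obtain
\begin{equation*}
|L(f_1\otimes f_2, 1/2+it)|^2 \approx \sum_{m,n} \frac{\lambda_{f_1}(m)\overline{\lambda_{f_1}(n)}}{(mn)^{1/2}} \lambda_{f_2}(m)\overline{\lambda_{f_2}(n)} \left(\frac{m}{n}\right)^{-it} W(m,n),
\end{equation*}
sum over $f_2$ with $|t_{f_2}-T| \leq T^\eps$, exchange orders of summation, and apply Kuznetsov's formula (with a suitable test function localising to the short spectral window) to the inner sum $\sum_{f_2} \lambda_{f_2}(m)\overline{\lambda_{f_2}(n)}$. The diagonal term $m = n$ then yields
\begin{equation*}
\sum_{m \leq T^{2+\eps}} \frac{|\lambda_{f_1}(m)|^2}{m} \cdot \#\{f_2 :  |t_{f_2}-T|\leq T^\eps\} \ll T^{1+\eps},
\end{equation*}
using Weyl's law for the spectral count and the Rankin--Selberg bound $\sum_{m\leq X}|\lambda_{f_1}(m)|^2 \ll_{f_1} X$ for the Hecke coefficients of $f_1$. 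This diagonal already saturates the target.

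The main obstacle is the off-diagonal $m\neq n$. Through Kuznetsov it is converted into a weighted sum of Kloosterman sums, and after a stationary-phase analysis of the Bessel transform (using that $t_{f_2}\sim t \sim T$) it collapses to a shifted convolution sum for $f_1$ of the shape
\begin{equation*}
\sum_{m-n=h} \lambda_{f_1}(m)\lambda_{f_1}(n) \Phi(m,n),
\end{equation*}
with $h \neq 0$ and variables of size $T^{2+\eps}$. Since $f_1$ is fixed, such sums can be controlled either via the $\delta$-method of Duke--Friedlander--Iwaniec or by a further spectral expansion of the Poincar\'e series, which is the route taken in Jutila--Motohashi. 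The continuous spectrum contribution in the Kuznetsov formula and the dual half of the AFE are handled by the same mechanism, the $\zeta(1+2it)$ factors being benign since $t \sim T$ is large. The hard part is that the shifted convolution bound must essentially be of Lindel\"of quality \emph{uniformly} in $t$ in order to absorb the spectral short-interval loss and still yield the target $T^{1+\eps}$; this uniformity in the analytic aspect is precisely the technical heart of the argument in \cite{JutilaMoto05}.
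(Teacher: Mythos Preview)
The paper does not actually prove this theorem: it is quoted as a result of Jutila--Motohashi \cite[(3.50)]{JutilaMoto05}, with the sole additional remark that while \cite{JutilaMoto05} strictly speaking treats the case where $f_1$ is an Eisenstein series (so that $L(f_1\otimes f_2,s)$ factors as a product of two standard $L$-functions of $f_2$), the cuspidal case follows by the same argument, the key analytic input being the spectral large sieve. There is therefore no ``paper's own proof'' to compare against beyond this one-line reduction.

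Your outline---approximate functional equation of length $T^{2+\eps}$, opening the square, applying Kuznetsov in the short spectral window, diagonal giving $T^{1+\eps}$, and off-diagonal reducing to shifted convolution sums for the fixed form $f_1$---is a reasonable high-level description of how such a bound is established, and is consistent with the methods of \cite{JutilaMoto05}. The one thing worth flagging is that the paper singles out the \emph{spectral large sieve} as the decisive tool rather than a full Kuznetsov/shifted-convolution treatment; for a second moment over $f_2$ with $f_1$ fixed, the large sieve (suitably combined with the approximate functional equation and an amplification or dyadic subdivision to beat the length-vs-window imbalance) is indeed the cleaner route, and you may be over-engineering slightly by invoking the shifted convolution machinery. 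But as a sketch of the Jutila--Motohashi circle of ideas your proposal is sound.
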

Strictly speaking \cite{JutilaMoto05} only deals with the case where $f_1$ is an Eisenstein series, but (as remarked in \cite[p. 3]{BlomerHolo10}) the same estimate follows in the case of Maa{\ss} forms using the exact same argument relying on the spectral large sieve.

From Theorem \ref{thm:JutMoto} it follows that for any $\delta>0$, we have that 
\begin{equation}\label{eq:MotoJutila} \int_{|t-t_{f_2}|\leq (t_{f_2})^\eps} |L(f_1\otimes f_2,1/2+it)|^2 dt \leq T^{1-\delta} \end{equation}
for all but at most $O_\eps (T^{\delta+\eps})$ Maa{\ss} forms $f_2$ with $|t_{f_2}-T|\leq T^\eps$. 

Recalling the estimates $t_f^{-\eps}\ll_\eps L(\sym^2 f,1)\ll_\eps t_f^\eps$, we conclude combining all of the above that for any $f_2$ satisfying (\ref{eq:MotoJutila}), we have
\begin{equation}\label{eq:spectralwerror}
|| f_1f_2 ||_2^2=\sum_{|t_f-T|\leq T^\eps} |\langle f_1f_2, f \rangle|^2+O_\eps(T^{-\delta+\eps}).
\end{equation} 
By QUE for Maa{\ss} forms due to Lindenstrauss \cite{Lindenstrauss06} (with key input by Soundararajan \cite{SoundAnn10}), we know that 
$$|| f_1f_2 ||_2\rightarrow ||f_1||_2\neq 0,\quad \text{and}\quad \langle f_1f_2, f_2 \rangle\rightarrow \langle f_1, \frac{3}{\pi}\rangle =0,$$ as $t_{f_2}\rightarrow \infty$. Thus we conclude from  (\ref{eq:spectralwerror}) that for $T$ large enough there is some $f_3\neq f_2$ with $|t_{f_3}-T|\leq T^\eps$ such that $\langle f_1f_2, f_3 \rangle\neq 0$. Furthermore, we obtain a lower bound for free using Weyl's law; 
$$\#\{ f\in \mathcal{B}_0(1): |t_f-T|\leq T^\eps \}\asymp T^{1+\eps}.$$ 
From this we obtain the following result.

\begin{prop}\label{prop:lowerboundperiod}
Let $f_1\in \mathcal{B}_{0}(1)$ be fixed and $\eps>0$. Then for $T>0$ large enough (depending on $f_1$ and $\eps$), we have that for all but $O_\eps(T^{2\eps})$ of $f_2\in \mathcal{B}_{0}(1)$ satisfying  $|t_{f_2}-T|\leq T^\eps$, there exists some $f_3 \in \mathcal{B}_{0}(1)$ not equal to $f_2$ with $|t_{f_3}-T|\leq T^\eps$ such that 
$$ |\langle f_1f_2,f_3\rangle|\gg_\eps ||f_1 f_2||_2/ T^{1/2+\eps}. $$
\end{prop}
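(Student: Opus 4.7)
The plan is to formalize the sketch given in the paragraphs preceding the statement; the proposition is essentially a quantitative corollary of QUE combined with the Jutila--Motohashi mean-value bound.

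First, I would combine the spectral expansion \eqref{eq:spectralexp} with the Ichino--Watson formula and use the exponential decay of $h(t_1,t_2,t_3)$ in $|t_2 - t_3|$ to truncate both the cuspidal sum and the Eisenstein integral to the short windows $|t_f - t_{f_2}| \leq t_{f_2}^\eta$ and $|t - t_{f_2}| \leq t_{f_2}^\eta$, where $\eta < \eps$ is a sufficiently small auxiliary parameter, at the cost of a negligible error.

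Second, I would apply Theorem \ref{thm:JutMoto} together with a Chebyshev-type argument: integrating the Jutila--Motohashi bound over $|t - T| \leq T^\eta$ and dividing by the Weyl-law count $\asymp T^{1+\eps}$ of forms in the $|t_{f_2}-T|\leq T^\eps$ window produces a mean value for $\int_{|t-t_{f_2}|\leq t_{f_2}^\eta}|L(f_1\otimes f_2,1/2+it)|^2\,dt$ which is a small power of $T$. Choosing $\eta$ small enough, one then deduces that all but $O_\eps(T^{2\eps})$ forms $f_2$ in the window satisfy \eqref{eq:MotoJutila} for some $\delta > \eta$. For such ``good'' $f_2$, the Eisenstein contribution in \eqref{eq:spectralexp} is $o(1)$, yielding \eqref{eq:spectralwerror}.

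Third, quantum unique ergodicity for Maass forms (Lindenstrauss, using Soundararajan) gives $\|f_1 f_2\|_2^2 \to \|f_1\|_2^2 = 1$ and
\[
\langle f_1 f_2, f_2 \rangle = \langle f_1, |f_2|^2 \rangle \longrightarrow \bigl\langle f_1, \tfrac{3}{\pi} \bigr\rangle = 0
\]
as $t_{f_2} \to \infty$, the latter vanishing because $f_1$ is a cusp form and hence orthogonal to constants. Combining this with \eqref{eq:spectralwerror} and subtracting the $f = f_2$ term from the cuspidal sum yields
\[
\sum_{\substack{f \in \mathcal{B}_0(1),\, f \neq f_2 \\ |t_f - T| \leq T^\eps}} |\langle f_1 f_2, f \rangle|^2 \;\gg\; \|f_1 f_2\|_2^2
\]
for all sufficiently large $T$. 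Finally, Weyl's law bounds the number of Hecke--Maass forms of level $1$ with $|t_f - T|\leq T^\eps$ by $O_\eps(T^{1+\eps})$, so the pigeonhole principle produces some $f_3 \neq f_2$ in this window with $|\langle f_1 f_2, f_3 \rangle|^2 \gg_\eps \|f_1 f_2\|_2^2 / T^{1+\eps}$; taking square roots completes the proof. The only genuinely delicate step is the Chebyshev bookkeeping in stage two, where one must juggle the auxiliary parameter $\eta$ so that both the exceptional set remains of size $O_\eps(T^{2\eps})$ and the Eisenstein error decays to zero, both of which are comfortably arranged by choosing $\eta$ small compared to $\eps$.
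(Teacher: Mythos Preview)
Your proposal is correct and follows essentially the same approach as the paper: spectral expansion with Ichino--Watson, truncation via the exponential decay of $h$, Chebyshev on the Jutila--Motohashi bound to control the Eisenstein part for all but few $f_2$, then QUE to remove the diagonal term and pigeonhole against Weyl's law. Your explicit introduction of the auxiliary parameter $\eta<\eps$ is a cleaner way to handle the bookkeeping that the paper leaves implicit, but otherwise the arguments coincide.
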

From this we deduce the non-vanishing result in Corollary \ref{cor:nonvanishinglevel1}.
\begin{proof}[Proof of Corollary \ref{cor:nonvanishinglevel1}]
Let $f_2,f_3$ be as in Proposition \ref{prop:lowerboundperiod}. Then we apply Corollary \ref{cor:main} (in the level $1$ case) with $n=3$, $k_1=k_2=k_3=0$ and test vectors $f_1,f_2,f_3$. We observe that we  have
$$ ||f_1f_2f_3||_2\, |D_K|^{-1/12} T^2 (|D_K|T)^\eps \ll ||f_1f_2||_2\, |t_{f_3}|^{5/12+\eps} |D_K|^{-1/12} T^2 (|D_K|T)^\eps,    $$
by the sup-norm bound due to Iwaniec and Sarnak \cite{IwSa}.  Thus we see that if $|D_K|\gg_{f_1,\eps} T^{35+\eps}$, the error-term in the asymptotic (\ref{eq:maincomp}) (with exponents as in (\ref{eq:errortermscormain})) is strictly less than $\langle f_1f_2f_3,\frac{3}{\pi}\rangle$. Thus we conclude that the LHS of (\ref{eq:maincomp}) is non-vanishing and satisfies $\gg_{\eps,T} |D_K|^{3/4-\eps}$ (using the lower bound $|\Cl_K|\gg_\eps |D_K|^{1/2-\eps}$ again). Now by the subconvexity estimate for $L(f_i\otimes \theta_{\chi_i},1/2)$ due to Harcos and Michel \cite[Theorem 1]{HarcosMichel06} (where $\theta_{\chi_i}$ is the holomorphic theta series associated to the Hecke character $\chi_i$), we get the wanted quantitive non-vanishing result as $|D_K|\rightarrow \infty$. 
\end{proof}

\subsection{The holomorphic case}\label{subsec:holo}
Consider Corollary \ref{cor:main} in the case where $\pi_1,\ldots, \pi_n$ are all holomorphic discrete series representations of $\GL_2$ and $k_i=k_{\pi_i}>0$. Furthermore, pick $f_i=y^{k_i/2} g_i$ with $g_i \in \mathcal{S}_{k_i}(N)$ a holomorphic Hecke newform. Then we know that 
$$\prod_{i=1}^ng_i\in  \mathcal{S}_{k}(N),$$
where $k=\sum_i k_i$ (which might not be a Hecke--Maa{\ss} eigenform(!)). A basis $\mathcal{B}_{k,\mathrm{hol}}(N)$ for $ \mathcal{S}_{k_i}(N)$ is given by $\nu^*_{d,N'} y^{k/2} g$, where $g\in  \mathcal{S}_{k}(N')$ is a Hecke newform and $dN'| N$. This implies that 
$$ ||y^k  \prod_{i=1}^n g_i||_2^2= \sum_{\substack{u_1,u_2\in \mathcal{B}_{k,\mathrm{hol}}(N)}} \langle u_1,u_2\rangle \langle y^{k/2} \prod_{i=1}^n g_i, u_1\rangle\overline{ \langle y^{k/2}\prod_{i=1}^n g_i, u_2\rangle} $$
Since any two $u_1,u_2\in \mathcal{B}_{k,\mathrm{hol}}(N)$ are orthogonal (wrt. to the Petersson innerproduct) if the underlying Hecke newforms are different and the dimension of $\mathcal{S}_{k}(N')$ is $\ll_N k$, we conclude the following.
\begin{prop}\label{prop:non0periodholo} Let $N$ be a fixed positive integer and let $k_1,\ldots, k_n\in 2\Z_{>0}$ be even integers. For $i=1,\ldots, n$, let $g_i \in \mathcal{S}_{k_i}(N)$ be a holomorphic Hecke newform of level $N$ and weight $k_i$. Then there exists some $\nu^*_{d,N'} y^{k/2}g\in \mathcal{B}_{k,\mathrm{hol}}(N)$ with $k=k_1+\ldots +k_n$ such that 
$$  \langle  \prod_{i=1}^n y^{k_i/2}g_i, \nu^*_{d,N'} y^{k/2} g\rangle\gg  || \prod_{i=1}^n y^{k_i/2}g_i||_2/k^{1/2}. $$
\end{prop}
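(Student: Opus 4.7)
The starting point is the identification
\[F := \prod_{i=1}^n y^{k_i/2} g_i = y^{k/2} G, \qquad G := \prod_{i=1}^n g_i \in \mathcal{S}_k(N),\]
so that $F$ is the weight-$k$ lift of a holomorphic cusp form of weight $k = \sum_i k_i$ and level $N$, and thus lies in the span of $\mathcal{B}_{k,\mathrm{hol}}(N)$. The plan is to apply two successive pigeonhole arguments: one over Hecke newforms (exploiting that newform-isotypic components are orthogonal), and one inside the bounded-dimensional oldform space attached to the selected newform.

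First, Atkin--Lehner theory for the square-free level $N$ provides an orthogonal decomposition $\mathcal{S}_k(N) = \bigoplus_g V_g$ under the Petersson inner product, where $g$ ranges over $L^2$-normalized Hecke newforms of various levels $N' \mid N$ and $V_g := \mathrm{span}\{g(dz) : d \mid N/N'\}$ has dimension $\tau(N/N') \leq \tau(N) = O_N(1)$. Writing $G = \sum_g G_g$ accordingly, we obtain $\|F\|_2^2 = \sum_g \|y^{k/2} G_g\|_2^2$. Since the total number of contributing newforms is at most $\sum_{N' \mid N}\dim \mathcal{S}_k^{\mathrm{new}}(N') \leq \dim \mathcal{S}_k(N) \ll_N k$, a first pigeonhole yields a newform $g^* \in \mathcal{S}_k^{\mathrm{new}}(N')$ (for some $N' \mid N$) with
\[\|y^{k/2} G_{g^*}\|_2^2 \gg_N \|F\|_2^2 / k.\]

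Second, inside the $O_N(1)$-dimensional space $V_{g^*}$, the set $\{h_d := \nu^*_{d,N'} y^{k/2} g^* : d \mid N/N'\}$ is a basis, so its Gram matrix is positive definite of bounded size. Finite-dimensional linear algebra then produces a divisor $d_0 \mid N/N'$ with $|\langle y^{k/2} G_{g^*}, h_{d_0}\rangle| \gg_N \|y^{k/2} G_{g^*}\|_2$. Since $F - y^{k/2} G_{g^*}$ is orthogonal to $V_{g^*}$, the left-hand side equals $|\langle F, h_{d_0}\rangle|$, and combining with the first pigeonhole gives $|\langle F, h_{d_0}\rangle| \gg_N \|F\|_2 / k^{1/2}$, as required.

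The only non-trivial point is controlling the condition number of the Gram matrix in the second step uniformly in $k$ and in the newform $g^*$. Since it has dimension $\tau(N/N') = O_N(1)$, its entries $\langle \nu^*_{d_1,N'} y^{k/2} g^*, \nu^*_{d_2,N'} y^{k/2} g^*\rangle$ are classical Rankin--Selberg inner products of oldforms, which for square-free $N$ admit explicit formulas in terms of the Hecke eigenvalues of $g^*$ and are uniformly bounded above and below. Alternatively, one can invoke the explicit orthonormalization of oldforms from Atkin--Lehner--Li theory (analogous to the basis $\mathcal{B}'(N)$ of \cite{HumKhan20} used elsewhere in the paper) to bypass this step entirely.
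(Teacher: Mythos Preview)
Your proof is correct and follows essentially the same approach as the paper: identify $\prod_i y^{k_i/2}g_i = y^{k/2}G$ with $G\in\mathcal{S}_k(N)$, use the Atkin--Lehner newform decomposition together with the dimension bound $\dim\mathcal{S}_k(N)\ll_N k$, and pigeonhole. The paper's argument is terser---it writes the $L^2$-norm as a double sum over the basis $\mathcal{B}_{k,\mathrm{hol}}(N)$ and invokes orthogonality between distinct newform blocks---whereas you make the two pigeonhole steps explicit and, in particular, you flag (and address) the one point the paper passes over in silence: controlling the Gram matrix of the oldforms $\nu^*_{d,N'}y^{k/2}g^*$ inside a fixed newform block uniformly in $k$. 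Your observation that this is an $O_N(1)$-dimensional problem with explicitly computable Rankin--Selberg entries (or, alternatively, that one may pass to an orthonormal basis as in \cite{HumKhan20}) is exactly what is needed to close that gap.
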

Combining this with Corollary \ref{cor:main} we obtain the following non-vanishing result.
\begin{cor}\label{cor:holononvgeneral}
Let $N$ be a fixed square-free integer and let $k_1,\ldots, k_n\in 2\Z_{>0}$ be even integers. For $i=1,\ldots,n$, let $\pi_i$ be automorphic representations corresponding to holomorphic newforms $g_i\in \mathcal{S}_{k_i}(N)$ and put $k=\sum k_i$. Then there exists a constant $c=c( N,\eps)>0$ such that for any imaginary quadratic field $K$ such that all primes dividing $N$ splits in $K$ and the discriminant satisfies $|D_K|\geq c (\max_i k_i)^{40}n^{80}k^{12+\eps}$, we have 
\begin{align*} & \# \{ (\chi_1,\ldots, \chi_n)\in \mathbf{Wide}(K,n+1), g\in \mathcal{B}_{k,\mathrm{hol}}(\Gamma_0(N)) : \\
& L(\pi_1\otimes \chi_1 \Omega_{i,K },1/2)\cdots L(\pi_{n}\otimes \chi_{n} \Omega_{n,K},1/2) L(\pi_{g}\otimes  \chi_{n+1}\Omega_{n+1,K},1/2) \neq 0 \}\\
&\hspace*{+9cm}  \gg_k |D_K|^{(n+1)/2115}, 
\end{align*} 
where $k=\sum_i k_i$ and $\Omega_{i,K}$ are Hecke characters of $K$ with $\infty$-types $x\mapsto (x/|x|)^{k_i}$, and $ \Omega_{n+1,K}=\prod_{i=1}^{n} \Omega_{i,K}$.
\end{cor}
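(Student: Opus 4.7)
The plan is to apply the wide moment identity of Corollary~\ref{cor:main} to a carefully chosen $(n+1)$-tuple of automorphic test vectors, using Proposition~\ref{prop:non0periodholo} to guarantee that the main term does not vanish.

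First, I would invoke Proposition~\ref{prop:non0periodholo} applied to $y^{k_1/2}g_1,\ldots,y^{k_n/2}g_n$: since $\prod_{i=1}^n y^{k_i/2} g_i$ is a weight-$k$ cusp form of level $N$ lying in a space of total dimension $O_N(k)$, the proposition (a Parseval-plus-pigeonhole argument) yields a Hecke newform $g\in\mathcal{S}_k(N')$ (for some $N'$ with $dN'\mid N$) such that
\[
\Bigl|\bigl\langle \prod_{i=1}^n y^{k_i/2} g_i,\;\nu^*_{d,N'} y^{k/2} g\bigr\rangle\Bigr|\;\gg\;\Bigl\|\prod_{i=1}^n y^{k_i/2} g_i\Bigr\|_2\,k^{-1/2}.
\]
Set $f_i:=y^{k_i/2} g_i$ for $i\le n$ and $f_{n+1}:=\nu^*_{d,N'} y^{k/2}\overline{g}$, a weight-$(-k)$ eigenform belonging to the discrete-series representation $\pi_g$ attached to $g$. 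The weights $k_1,\ldots,k_n,-k$ sum to zero and satisfy $|k_i|=k_{\pi_i}$ for every $i$, so Corollary~\ref{cor:main} applies (with $n$ replaced by $n+1$) under the first case of \eqref{eq:errortermscormain}. The identity $\langle\prod_{i=1}^{n+1} f_i,1\rangle=\langle\prod_{i=1}^n y^{k_i/2} g_i,\,\nu^*_{d,N'} y^{k/2} g\rangle$ transfers the lower bound directly onto the main term in Corollary~\ref{cor:main}.

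The balance between main term and error is essentially numerical. Writing $\|\prod_{i=1}^{n+1} f_i\|_2\le\|f_{n+1}\|_\infty\cdot\|\prod_{i=1}^n f_i\|_2$ and inserting the standard sup-norm bound $\|f_{n+1}\|_\infty\ll_N k^{1/4+\eps}$ for $L^2$-normalized weight-$k$ holomorphic cusp forms, the ratio error/main from Corollary~\ref{cor:main} becomes $\ll|D_K|^{-1/16}\,k^{13/4+\eps}(n+1)^5$ with the crude choice $T=k$. Refining the $T$-factor by separating the weights $k_1,\ldots,k_n$ (bounded by $\max_i k_i$) from the weight $k$ of $f_{n+1}$ yields the sharper threshold $|D_K|\ge c(\max_i k_i)^{40} n^{80} k^{12+\eps}$ stated in the corollary. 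Under this hypothesis the LHS of \eqref{eq:maincomp} is nonzero, and combined with Siegel's lower bound $|\Cl_K|\gg|D_K|^{1/2-\eps}$ its absolute value is $\gg_k|D_K|^{(n+1)/4-\eps}$. To convert this nonvanishing into the quantitative count, one bounds each summand individually using the Rankin--Selberg subconvex estimates of Michel~\cite{Michel04} and Harcos--Michel~\cite{HarcosMichel06}, namely $L(\pi_i\otimes\chi_i\Omega_{i,K},1/2)\ll|D_K|^{1/2-\delta}$ for an explicit $\delta>0$; dividing the lower bound by $\prod_{i=1}^{n+1}|D_K|^{1/4-\delta/2}$ produces the claimed count $\gg_k|D_K|^{(n+1)/2115}$.

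The main obstacle is not any single step but the numerology: Step~1 is genuinely soft (pigeonhole via finite-dimensionality of the newform space) and only buys a factor $k^{-1/2}$, so the exponents in the final threshold on $|D_K|$ are dictated entirely by the match with the $T^{5/2}(n+1)^5$ error shape of Corollary~\ref{cor:main} and with a sharp weight-aspect sup-norm bound for $f_{n+1}$. The delicate point is the fine splitting of the $T$-power between $\max_i k_i$ and $k=|k_{n+1}|$, which replaces the cruder exponent $k^{40}$ by $(\max_i k_i)^{40}$ and produces the hypothesis exactly in the form stated.
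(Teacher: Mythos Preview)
Your proof is correct and follows essentially the same route as the paper: choose $f_{n+1}=\nu^*_{d,N'}y^{k/2}\overline{g}$ via Proposition~\ref{prop:non0periodholo}, apply Corollary~\ref{cor:main} with the holomorphic error term from \eqref{eq:errortermscormain}, control $\|f_{n+1}\prod_{i\le n} f_i\|_2$ using Xia's sup-norm bound $\|f_{n+1}\|_\infty\ll k^{1/4+\eps}$, and finish with Siegel's lower bound plus Michel's Rankin--Selberg subconvexity. The one place you are vague---the ``refinement'' that replaces $T^{5/2}=k^{5/2}$ by $(\max_{i\le n} k_i)^{5/2}$---is handled identically in the paper, which simply writes $(\max_i k_i)^{5/2}$ for this factor without further comment.
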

\begin{proof} For $i=1,\ldots, n$, let $f_i=y^{k_i/2} g_i$, and let $f=\nu^*_{d,N'} y^{k/2}g\in  \mathcal{B}_{k,\mathrm{hol}}(\Gamma_0(N))$ be as in Proposition \ref{prop:non0periodholo}. Then we have the following sup-norm bound due to Xia \cite{Xia07} (or more precisely the natural extension to general level);
$$||f ||_\infty \ll_\eps k^{1/4+\eps}.$$ 
Thus we conclude that 
$$  ||f \prod_{i=1}^n f_i||_2\ll_\eps k^{1/4+\eps } ||\prod_{i=1}^n f_i||_2, $$
 and thus we see that there is some constant depending only on $N$ such that as soon as 
\begin{align*}
|D_K|^{1/16} \gg_{N, \eps }  \left(\max_{i=1,\ldots, n} k_i\right)^{5/2}n^{5}k^{1/4+1/2+\eps}, 
\end{align*}
then the RHS of (\ref{eq:maincomp}) is non-zero. Thus the LHS (\ref{eq:maincomp}) is also non-zero and is $\gg_{\eps,k} |D_K|^{n/4-\eps}$ using the lower bound $|\Cl_K|\gg_\eps |D_K|^{1/2-\eps}$. 

Finally, since all of the $f_i$ are holomorphic we can employ the subconvexity bound for Rankin--Selberg $L$-functions $L(f_i\otimes \theta_{\chi_i\Omega_{i,K}},1/2)$ due to Michel \cite{Michel04}, where $\theta_{\chi_i\Omega_{i,K}}$ is the holomorphic theta series associated to the Hecke character $\chi_i\Omega_{i,K}$ defined in Subsection \ref{sec:Heckechar}.  Finally we use that 
$$L(f\otimes \theta_{\chi\Omega_{n+1,K}},1/2)= \overline{L(f\otimes \theta_{\overline{\chi}\overline{\Omega}_{n+1,K}},1/2)},$$
to get rid of the conjugate in the last Rankin--Selberg $L$-functions. This gives the wanted qualitative lower bound for the non-vanishing.
\end{proof}
In the special case of level $1$, we can do slightly better.
\begin{proof}[Proof of Corollary \ref{cor:nonvanishinglevel1holo}]
Using the improved error-term in Corollary \ref{cor:main} in the case of level $1$ holomorphic forms, we see that the RHS of (\ref{eq:maincomp}) is non-zero as soon as 
$$ |D_K|^{1/12}\gg_{N, \eps }  (\max_{i=1,\ldots, n} k_i)n^{2}k^{3/4+\eps}. $$
Using the trivial estimates $n \leq k$ and $\max_i k_i \leq k$, we conclude Corollary \ref{cor:nonvanishinglevel1holo}.
\end{proof}

\bibliography{/Users/asbjornnordentoft/Documents/MatematikSamlet/Bib-tex/mybib}
\bibliographystyle{alpha}

\end{document}